\documentclass{gtpart}

\usepackage{graphicx}
\usepackage{color}
\usepackage{pinlabel}
\usepackage{palatino}
\usepackage{parskip}
\usepackage{amsthm,amsfonts,amsmath,amssymb,amscd}
\usepackage{verbatim}
\usepackage{enumerate}
\usepackage{tikz}
\usetikzlibrary{arrows}
\usetikzlibrary{trees}
\usetikzlibrary{matrix}
\usetikzlibrary{positioning}
\usepackage{cancel}
\definecolor{darkg}{RGB}{0,128,0}

\newtheorem{thm}{Theorem}[subsection]
\newtheorem{lma}[thm]{Lemma}
\newtheorem{cor}[thm]{Corollary}
\newtheorem{prp}[thm]{Proposition}

\theoremstyle{remark}
\newtheorem{rmk}[thm]{Remark}

\theoremstyle{definition}
\newtheorem{dfn}[thm]{Definition}
\newtheorem{exm}[thm]{Example}
\newtheorem{setup}[thm]{Setup}

\newcommand{\mb}[1]{\mathbf{#1}}
\newcommand{\CC}{\mb{C}}
\newcommand{\FF}{\mb{F}}
\newcommand{\PP}{\mb{P}}
\newcommand{\QQ}{\mb{Q}}
\newcommand{\RR}{\mb{R}}
\newcommand{\ZZ}{\mb{Z}}
\newcommand{\OP}{\operatorname}

\newcommand{\cp}[1]{\CC\PP^{#1}}
\newcommand{\rp}[1]{\RR\PP^{#1}}

\newcommand{\id}{\mathrm{id}}
\newcommand{\kk}{k}
\newcommand{\hf}{\mathrm{hf}}
\newcommand{\Tw}{\mathrm{Tw}}

\newcommand{\Cone}[1]{\OP{Cone}\left(#1\right)}
\newcommand{\TO}[3]{
#1\xrightarrow{#2} #3
}

\newcommand{\QH}{QH}
\newcommand{\HF}{HF}
\newcommand{\CF}{CF}
\newcommand{\HH}{\OP{HH}}
\newcommand{\CW}{CW}

\newcommand{\aA}{\mathcal{A}}
\newcommand{\bB}{\mathcal{B}}

\newcommand{\fF}{\mathcal{F}}
\newcommand{\gG}{\mathcal{G}}

\newcommand{\wW}{\mathcal{W}}

\mathchardef\mhyphen="2D
\newcommand{\nufun}{\mathfrak{nu\mhyphen fun}}

\newcommand{\bimod}[1]{(#1,#1)\mhyphen \mathrm{bimod}}

\mathchardef\ordinarycolon\mathcode`\:
\mathcode`\:=\string"8000
\begingroup \catcode`\:=\active
  \gdef:{\mathrel{\mathop\ordinarycolon}}
\endgroup

\makeatletter
\newcommand*{\da@rightarrow}{\mathchar"0\hexnumber@\symAMSa 4B }
\newcommand*{\da@leftarrow}{\mathchar"0\hexnumber@\symAMSa 4C }
\newcommand*{\xdashrightarrow}[2][]{%
  \mathrel{%
    \mathpalette{\da@xarrow{#1}{#2}{}\da@rightarrow{\,}{}}{}%
  }%
}
\newcommand{\xdashleftarrow}[2][]{%
  \mathrel{%
    \mathpalette{\da@xarrow{#1}{#2}\da@leftarrow{}{}{\,}}{}%
  }%
}
\newcommand*{\da@xarrow}[7]{%
  \sbox0{$\ifx#7\scriptstyle\scriptscriptstyle\else\scriptstyle\fi#5#1#6\m@th$}%
  \sbox2{$\ifx#7\scriptstyle\scriptscriptstyle\else\scriptstyle\fi#5#2#6\m@th$}%
  \sbox4{$#7\dabar@\m@th$}%
  \dimen@=\wd0 %
  \ifdim\wd2 >\dimen@
    \dimen@=\wd2 %
  \fi
  \count@=2 %
  \def\da@bars{\dabar@\dabar@}%
  \@whiledim\count@\wd4<\dimen@\do{%
    \advance\count@\@ne
    \expandafter\def\expandafter\da@bars\expandafter{%
      \da@bars
      \dabar@ 
    }%
  }%
  \mathrel{#3}%
  \mathrel{%
    \mathop{\da@bars}\limits
    \ifx\\#1\\%
    \else
      _{\copy0}%
    \fi
    \ifx\\#2\\%
    \else
      ^{\copy2}%
    \fi
  }%
  \mathrel{#4}%
}
\makeatother

\newcommand{\TODASH}[3]{
#1\xdashrightarrow{#2} #3
}

\title{Generating the Fukaya categories of \\ Hamiltonian $G$-manifolds}

\author{Jonathan David Evans \newline \ \  Yank\i\ Lekili}
\address{University College London \newline King's College London}

\begin{document}

\begin{abstract}
    Let $G$ be a compact Lie group and $\kk$ be a field of
    characteristic $p \geq 0$ such that $H^*(G)$ has no $p$-torsion if
    $p>0$. We show that a free Lagrangian orbit of a Hamiltonian
    $G$-action on a compact, monotone, symplectic manifold $X$
    split-generates an idempotent summand of the monotone Fukaya
    category $\mathcal{F}(X; \kk)$ if and only if it represents a
    non-zero object of that summand (slightly more general results are
    also provided).  Our result is based on: an explicit understanding
    of the wrapped Fukaya category $\mathcal{W}(T^*G; \kk)$ through
    Koszul twisted complexes involving the zero-section and a
    cotangent fibre; and a functor $D^b \mathcal{W}(T^*G; \kk) \to
    D^b\mathcal{F}(X^{-} \times X; \kk)$ canonically associated to the
    Hamiltonian $G$-action on $X$. We explore several examples which
    can be studied in a uniform manner including toric Fano varieties
    and certain Grassmannians.

\end{abstract}

\maketitle

\section{Introduction}

\subsection{Background}

In this paper we develop a technique for studying monotone Fukaya
categories of symplectic manifolds admitting a Hamiltonian action of a
compact Lie group. The Fukaya category of a symplectic manifold is a
triangulated $A_\infty$-category which organises information about
Lagrangian submanifolds and their intersections. It is well-known for
its appearance in Kontsevich's homological mirror symmetry conjecture,
however a more basic motivation for studying the triangulated
$A_\infty$-structure is that it makes information about Lagrangian
intersections accessible to computation. For example, one can make
sense of the notion of a {\em split-generator}, a Lagrangian
submanifold from which (representatives of) all objects of the
category can be obtained by forming iterated cones and taking direct
summands. This has geometric ramifications: if $K$ split-generates
then $K$ is not displaceable from any Lagrangian submanifold $L$ with
$\HF(L,L)\neq 0$. Finding a split-generator is also the first step in
many modern proofs of homological mirror symmetry (for example \cite{AbouzaidSmith,SeidelK3,Sheridan}).

The aim of this paper is to determine when a free Lagrangian orbit of
a Hamiltonian $G$-action split-generates a summand\footnote{In Section
  \ref{sct:qh}, we explain how the quantum cohomology splits as a
  direct sum of local rings and the Fukaya category correspondingly
  splits into summands. This is a finer splitting than the usual
  splitting by eigenvalues of quantum product with the first Chern
  class.}  of the Fukaya category. We show in Theorem
\ref{thm:generation} that, when the characteristic of the coefficient
field is not a torsion prime for $G$, the orbit generates a summand
whenever it represents a non-zero object in that summand. In the final
section of the paper, we give a number of examples where this yields
explicit split-generators, including toric Fano manifolds and certain
Grassmannians.

\subsection{The main result}

Fix a compact Lie group $G$ and a monotone symplectic manifold
$X$. Suppose that there is a Hamiltonian $G$-action on $X$ with
equivariant moment map $\mu\colon X\to\mathfrak{g}^*$. There is a
natural monotone Lagrangian correspondence $C\subset (T^*G)^{-}\times
X^{-}\times X$ called the {\em moment Lagrangian} defined as follows:
\[C:=\{(g,v,x,y)\in (T^*G)^{-}\times X^{-}\times X\ :\ v=\mu(gx),\ y=gx\}.\]
This was noticed by Weinstein \cite{Weinstein} and Guillemin-Sternberg
\cite{GuilleminSternberg}. The authors first learned of it from
Teleman's paper \cite{Teleman}.

Fix a field $\kk$. We will use the correspondence $C$ to define a
triangulated $A_\infty$-functor
\[\mathfrak{C}: D^b \wW(T^*G;\kk) \to D^b \fF(X^{-}\times X;\kk) \]
from the derived wrapped Fukaya category of $T^*G$ to the derived
Fukaya category of monotone Lagrangians in $X^{-} \times X$ (Section
\ref{sct:mcf}).

Abouzaid $\cite{AbouzaidCotFib}$ proves that $D^b \wW(T^*G;\kk)$ is
generated by the cotangent fibre $T_1^*G$ as a triangulated category,
so to define a triangulated $A_\infty$-functor on $D^b\wW(T^*G;\kk)$,
it suffices to define it on the subcategory with a single object
$T^*_1G$. Our functor is defined using the quilt formalism of
\cite{MWW} and the geometric composition theorem
\cite{LekiliLipyanskiy,WWfunctoriality}; we review this formalism in
Section \ref{sct:quilt} and explain how to modify it in our
setting. The key point in defining this functor is the fact that the
cotangent fibre $T^*_1G$ at the identity in $G$ can be composed
geometrically with the correspondence $C$, and the composition is the
diagonal $\Delta\subset X^-\times X$ (Lemma \ref{lma:diagcomp}). In
particular, the functor $\mathfrak{C}$ gives a map
\begin{equation}\label{e2?}
\mathfrak{C}^1\colon\CW^*(T_1^*G, T_1^*G ; \kk) \to \CF^*(\Delta, \Delta; \kk).
\end{equation}
On cohomology, this sends the wrapped Floer cohomology of the
cotangent fibre of $G$ (infinite-dimensional) to quantum cohomology of
$X$ (finite-dim\-en\-si\-on\-al). In general, the functor
$\mathfrak{C}$ is far from being full or faithful.

\begin{rmk}
It is conjectured in \cite{Teleman} that the morphism from (\ref{e2?})
can be upgraded to an $E_2$-algebra morphism.
\end{rmk}

Next, let us assume that $\mu$ is transverse to $0\in\mathfrak{g}^*$
and that $G$ acts freely on $\mu^{-1}(0)$. In this situation, we can
prove that the image $\mathfrak{C}(G)$ of the zero-section $G \subset
T^* G$ is represented in $D^b \mathcal{F}(X^{-} \times X;\kk)$ by the
Lagrangian submanifold: \begin{equation} \label{Mdef} \mathfrak{C}(G)
  \cong \left\{(x,y)\in X^{-}\times
  X\ :\ x,y\in\mu^{-1}(0),\ y=gx\mbox{ for some }g\in
  G\right\}. \end{equation} In particular, when $\mu^{-1}(0)$ is a
free Lagrangian orbit $L$, we have $\mathfrak{C}(G) \cong L \times L$.

To obtain more information about $\mathfrak{C}(G)$, we first study the
domain of our functor $\mathfrak{C}$, namely the wrapped Fukaya
category $\mathcal{W}(T^*G; \kk)$. Wrapped Fukaya categories of
cotangent bundles have been studied extensively in the past few
years. For example, the papers of Fukaya-Seidel-Smith \cite{FSS1,FSS2}
and Abouzaid \cite{AbouzaidCotFib, AbouzaidBasedLoops} provide
concrete ways of understanding the wrapped Fukaya categories of
cotangent bundles: there is a full and faithful embedding of
$\wW(T^*G;\kk)$ into the category of $A_\infty$-bimodules over the
dg-algebra $C_{-*}(\Omega G;\kk)$ of cubical chains on the based loop
space, equipped with the Pontryagin product. Under this embedding, the
zero-section $G \subset T^*G$ is identified with the trivial
$A_\infty$-bimodule $\kk$.

In Section \ref{sct:formality} we prove that if $p=\OP{char}(\kk)$ and
$H^*(G;\ZZ)$ has no $p$-torsion (if $p>0$), then there is a
quasi-isomorphism of $A_\infty$-algebras (Theorem \ref{thm:formal}) :
\[ H_{-*}(\Omega G;\kk) \to C_{-*}(\Omega G;\kk) \]
While this follows from classical arguments, we could not find a proof
in the literature, so we have explained this in some detail
here. Since $H_{-*}(\Omega G;\kk)$ is a polynomial algebra, this
allows us to argue that the standard Koszul complex resolving $\kk$ as
a $H_{-*}(\Omega G;\kk)$-bimodule defines a {\em Koszul twisted
  complex} (Definition \ref{dfn:koszul}) representing $\kk$ in the
category of $A_\infty$-bimodules over $C_{-*}(\Omega G;\kk) \cong
H_{-*}(\Omega G;\kk)$. In $\wW(T^*G;\kk)$, this tells us that $G$ is a
Koszul twisted complex built out of $T^*_1G$.

In Section \ref{sct:twcpx}, we show that Koszul twisted complexes are
pushed forward to Koszul twisted complexes under
$A_\infty$-functors. In particular, applying our functor
$\mathfrak{C}$, we see that $\mathfrak{C}(G)$ can be expressed as a
Koszul twisted complex built out of $\Delta$ in $\fF(X^{-}\times
X;\kk)$. A Koszul twisted complex $K$ built out of an object $K'$ has
the crucial property that, if certain morphisms involved in the
differential are nilpotent, then $K$ \emph{split-generates} $K'$
(Corollary \ref{cor:nilpgen}). This can be seen as a form of Koszul
duality; analogous nilpotence conditions play a decisive role in
establishing the convergence of Eilenberg-Moore spectral sequence
\cite{dwyer}; see also \cite{dwyergreenlees}. For a recent study of
Koszul duality in the context of Fukaya categories see \cite{EL},
which has partially inspired our work.

The complex representing $\mathfrak{C}(G)$ is built out of $\Delta$,
so these morphisms are elements of $\CF(\Delta,\Delta;\kk)$ which, by
the homological perturbation lemma, can be identified with the quantum
cohomology $\QH(X;\kk)$ as an $A_\infty$-algebra with vanishing
differential but possibly non-trivial higher products. In particular,
our main result exploits the algebra structure of $\QH(X;\kk)$ to
determine whether $\mathfrak{C}(G)$ split-generates $\Delta$.

To state our main theorem, recall that we have a block decomposition
of the quantum cohomology $\QH(X;\kk) = \bigoplus_\alpha
\QH(X;\kk)_\alpha $ into local rings (Section \ref{sct:qh}), a
corresponding splitting of $\Delta$ into idempotent summands,
$\bigoplus_{\alpha\in W}\Delta_\alpha$ and a decomposition of the
Fukaya category into summands $D^{\pi} \mathcal{F}(X;\kk) =
\bigoplus_\alpha D^{\pi} \mathcal{F}(X;\kk)_\alpha$.  This is a
refinement of the usual splitting of quantum cohomology into
eigen-spaces of quantum product with $c_1(X)$.

In Section \ref{sct:ham}, we prove our main theorem:

\begin{thm}\label{thm:mainthm}
  Suppose $G$ is a compact Lie group and $\kk$ is a field of characteristic $p \geq 0$ such that
  $H^*(G)$ has no $p$-torsion (if $p>0$). Let $X$ be a compact,
  monotone, Hamiltonian $G$-manifold such that the associated moment
  map $\mu: X \to \mathfrak{g}^*$ has $0$ as a regular value and that
  $G$ acts freely on $\mu^{-1}(0)$.
    
  Then, the Lagrangian submanifold $\mathfrak{C}(G) \subset X^{-}
  \times X$, defined by (\ref{Mdef}), split-generates $\Delta_\alpha$
  in $\mathcal{F}(X^{-} \times X)$ if and only if
  $HF(\mathfrak{C}(G),\Delta_\alpha; \kk) \neq 0$.

  In particular, if $\mu^{-1}(0)= L$ is a free Lagrangian orbit which
  represents a non-zero object in the summand $D^{\pi} \mathcal{F}(X;
  \kk)_\alpha$, then $L$ split-generates the Fukaya category
  $D^{\pi} \mathcal{F}(X;\kk)_\alpha$.
\end{thm}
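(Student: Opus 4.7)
The plan is to represent $\mathfrak{C}(G)$ as a Koszul twisted complex built from the diagonal $\Delta$ in $\fF(X^{-}\times X;\kk)$, and then to read off split-generation from the ``unit versus nilpotent'' dichotomy available in the local ring $\QH(X;\kk)_\alpha$. The ``only if'' direction is immediate: if $\mathfrak{C}(G)$ split-generates $\Delta_\alpha$, then $\Delta_\alpha$ is a summand of an iterated cone on $\mathfrak{C}(G)$, and taking Floer cohomology with $\Delta_\alpha$---whose endomorphism ring contains the non-zero unit of $\QH(X;\kk)_\alpha$---forces $\HF(\mathfrak{C}(G), \Delta_\alpha) \neq 0$.

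For the ``if'' direction, I first invoke Theorem \ref{thm:formal} to replace $C_{-*}(\Omega G; \kk)$ by its polynomial cohomology algebra $H_{-*}(\Omega G; \kk)$, and take the standard Koszul resolution of the augmentation bimodule. Under the Fukaya-Seidel-Smith/Abouzaid embedding this exhibits $G \subset T^*G$ as a Koszul twisted complex built from copies of $T_1^*G$ in $\wW(T^*G;\kk)$. By the functoriality of Koszul twisted complexes under $A_\infty$-functors (Section \ref{sct:twcpx}), pushing forward under $\mathfrak{C}$ then presents $\mathfrak{C}(G)$ as a Koszul twisted complex built from $\mathfrak{C}(T_1^*G) \cong \Delta$, whose structure maps are the images $\eta_i := \mathfrak{C}^1(x_i) \in \HF^*(\Delta,\Delta;\kk) \cong \QH^*(X;\kk)$ of the polynomial generators $x_i$ of $H_{-*}(\Omega G;\kk)$.

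Now project to the summand indexed by $\alpha$: the resulting Koszul twisted complex on $\Delta_\alpha$ has structure maps $\eta_i^\alpha \in \QH(X;\kk)_\alpha$. Since $\kk$ is algebraically closed, $\QH(X;\kk)_\alpha$ is a local Artinian $\kk$-algebra with residue field $\kk$, so every element is either a unit or nilpotent. If some $\eta_i^\alpha$ were a unit, then the corresponding tensor factor of the Koszul twisted complex---the mapping cone of multiplication by $\eta_i^\alpha$ on $\Delta_\alpha$---would be contractible, forcing $\mathfrak{C}(G)_\alpha \simeq 0$ and hence $\HF(\mathfrak{C}(G),\Delta_\alpha) = 0$, contrary to hypothesis. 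Consequently every $\eta_i^\alpha$ is nilpotent, and Corollary \ref{cor:nilpgen} directly yields that $\mathfrak{C}(G)$ split-generates $\Delta_\alpha$. The ``in particular'' statement follows by a standard transfer: when $\mathfrak{C}(G) \cong L\times L$, the quilt identification $\HF(L\times L,\Delta) \cong \HF(L,L)$ turns the hypothesis that $L$ is non-zero in $D^\pi \fF(X;\kk)_\alpha$ into $\HF(L\times L,\Delta_\alpha) \neq 0$, and split-generation of $\Delta_\alpha$ by $L\times L$ in $\fF(X^{-}\times X;\kk)_\alpha$ implies split-generation of $\fF(X;\kk)_\alpha$ by $L$ by convolving with any test object and using that convolution with $\Delta_\alpha$ is the identity.

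The step I expect to be the main obstacle is making the factor-by-factor contractibility of the Koszul twisted complex precise when a structure map is invertible. At the cohomological level this is elementary, but to conclude $\mathfrak{C}(G)_\alpha \simeq 0$ as an object of the twisted-complex category one must carry out the contraction in the $A_\infty$-setting, lifting a cohomological unit to a genuine homotopy equivalence of twisted complexes, and verifying that homological perturbation identifying the $\eta_i$ with elements of $\QH(X;\kk)$ is compatible with the block decomposition. I expect both points to be available through the apparatus of Section \ref{sct:twcpx} together with the formality of $H_{-*}(\Omega G;\kk)$, which rigidifies the Koszul complex at the $A_\infty$ level.
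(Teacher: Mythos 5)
Your proposal is correct and follows essentially the same route as the paper: formality of $C_{-*}(\Omega G;\kk)$ gives the Koszul twisted complex presentation of the zero-section, Lemma \ref{lma:functorkoszul} pushes it forward under $\mathfrak{C}$, and the unit-versus-nilpotent dichotomy in the local rings $\QH(X;\kk)_\alpha$ (Lemma \ref{lma:qhnilp}) is resolved exactly as in Lemma \ref{lma:diaggen} via Corollaries \ref{cor:quisoimplieszero} and \ref{cor:nilpgen}. The contractibility step you flag as the main obstacle is precisely what Lemma \ref{lma:quisoinduct} supplies, and your treatment of the ``in particular'' clause via the projection functors $\Phi(L\times L)$ matches Corollary \ref{cor:split-gen}.
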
 

The last part of our theorem states if $\mu^{-1}(0)= L$ is a free
Lagrangian orbit, then either $L$ projects to a zero object in the
summand $D^{\pi} \mathcal{F}(X;\kk)_\alpha$ or it split-generates the
summand. We shall see in Section \ref{sct:examples} that both
situations may arise.  Determining the summands to which $L$ projects
non-trivially boils down to computing $HF(L,L)$ as a module over the
ring of semi-simple elements in quantum cohomology (see Remark
\ref{boilsdown}).

We expect that there will be straightforward generalisations of our
theorem to the case where $G$ acts on $\mu^{-1}(0)$ with finite
stabilisers; in this case the immersed Lagrangian $\mathfrak{C}(G)$
should be equipped with a bounding cochain in the sense of
{\cite[Ch. 3]{FOOO}}.  It would also be interesting to study the
functor $\mathfrak{C}$ when $X$ is non-compact. However, in this case
the correspondence $C$ would be non-compact and one has to arrange
that holomorphic curve theory is well-behaved.

\begin{rmk}\label{rmk:othergeneration}
Another curious consequence of our theorems is the fact that if $L$ is
a free Lagrangian orbit which split-generates as in Theorem
\ref{thm:mainthm} then any other non-zero object in $D^{\pi}
\mathcal{F}(X;\kk)_\alpha$ split-generates $D^{\pi}
\mathcal{F}(X;\kk)_\alpha$. See Corollary \ref{cor:othergeneration}.
\end{rmk}

\begin{rmk} \label{rmk:afooogen}
  Abouzaid, Fukaya, Oh, Ohta and Ono \cite{AFOOO} prove that a
  Lagrangian $L$ split-generates a summand $D^{\pi} \fF(X;\kk)_\alpha$
  of the Fukaya category if the closed-open map
  \[ \mathcal{CO} \colon\QH^\bullet (X;\kk)_\alpha\to \HH^\bullet (\CF(L_\alpha,L_\alpha; \kk)) \]
  is injective. For a proof of this in the monotone case see
  {\cite[Corollary 2.18]{Sheridan}}.  Our proof does not appeal to
  this result, however, we observe that our generation result implies
  that $\QH^\bullet (X;\kk)_\alpha \cong \HH^\bullet
  (\CF(L_\alpha,L_\alpha;\kk))$ (see Corollary \ref{cor:aboucrit}).
\end{rmk}

\subsection{Examples and applications}

We now present a selection of examples where Theorem
\ref{thm:generation} can be applied.  We defer the study of more
examples to future work in the context of mirror symmetry.

The following is an immediate corollary our main theorem coupled with
of computations of Floer cohomology due to Cho-Oh \cite{ChoOh} and
Fukaya-Oh-Ohta-Ono \cite{FOOOtoric}:

\begin{cor}[Corollary \ref{cor:toricgen}]
  Let $X$ be a toric Fano variety and $\kk$ be an algebraically closed
  field of arbitrary characteristic.  Any summand $D^{\pi}
  \mathcal{F}(X;\kk)_\alpha$ of the Fukaya category is generated by
  the barycentric (monotone) torus fibre $L$ equipped with an
  appropriate flat $\kk$-line bundle $\xi_\alpha$.
\end{cor}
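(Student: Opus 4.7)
The plan is to apply Theorem \ref{thm:generation} (the slight generalisation of Theorem \ref{thm:mainthm} promised in the abstract) to the Hamiltonian action of the compact torus $T=(S^1)^n$ on $X$. Since $H^*(T;\ZZ)\cong \Lambda^*(\ZZ^n)$ is torsion-free, the hypothesis on $\OP{char}(\kk)$ is automatic. After translating the moment map so that $0\in \mathfrak{t}^*$ is the barycenter of the moment polytope, $\mu^{-1}(0)$ is precisely the monotone Lagrangian torus fibre $L$, and $T$ acts freely and transitively on $L$; the hypotheses of the main theorem are thus met.

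The next step is to regard each flat rank-one $\kk$-local system $\xi$ on $L$ as decorating $L$ to produce an object $(L,\xi)\in\mathcal{F}(X;\kk)$; the main theorem then asserts that $(L,\xi)$ either represents the zero object in every summand $D^{\pi}\mathcal{F}(X;\kk)_\alpha$ or else split-generates the summands in which it is non-zero. This uses a mild local-system enhancement of Theorem \ref{thm:mainthm}, which I expect to follow from a verbatim adaptation of the argument, since $\xi$ only twists the holonomy weights of holomorphic discs and does not interact with the $T$-equivariant geometry behind the moment correspondence $C$.

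Finally, I will invoke the Cho--Oh/FOOO computation of $\HF((L,\xi),(L,\xi);\kk)$ on a toric Fano: this group is non-zero precisely when $\xi\in \OP{Hom}(\pi_1(L),\kk^*)$ is a critical point of the Landau--Ginzburg mirror superpotential $W$. Combined with the Batyrev--Givental presentation $\QH^*(X;\kk)\cong \kk[x_1^{\pm 1},\ldots,x_n^{\pm 1}]/(\partial_1 W,\ldots,\partial_n W)$, this produces a bijection between critical points of $W$ and the maximal ideals (i.e.\ idempotent summands) of $\QH^*(X;\kk)$. For each summand $\alpha$, the pair $(L,\xi_\alpha)$ associated to the corresponding critical point is therefore non-zero in $D^{\pi}\mathcal{F}(X;\kk)_\alpha$, and by the previous paragraph it split-generates that summand.

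The main obstacle I anticipate is verifying that the toric Floer-theoretic computations really do enumerate all idempotent summands of $\QH^*(X;\kk)$ over a field of arbitrary characteristic, accounting for any non-reduced structure on the critical locus of $W$ that can appear in small characteristic, and ensuring each such summand is detected by some pair $(L,\xi_\alpha)$. The local-system upgrade of the main theorem should by comparison be routine.
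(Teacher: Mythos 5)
Your proposal follows essentially the same route as the paper: apply the main generation theorem (with its local-system enhancement, which the paper records as a remark after Corollary \ref{cor:aboucrit}) to the free torus orbit $L=\mu^{-1}(0)$, then use the Cho--Oh/FOOO identification of $\QH(X;\kk)$ with the Jacobian ring of the superpotential to match critical points $\xi_\alpha$ of $W$ bijectively with the local summands $\QH(X;\kk)_\alpha$ and to verify $\HF((L,\xi_\alpha),(L,\xi_\alpha);\kk)\neq 0$. The obstacle you flag about non-reducedness in small characteristic is handled exactly as you suggest, by working with the critical locus as a zero-dimensional scheme whose ring of functions decomposes into local rings matching the block decomposition of quantum cohomology.
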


Note that, since $\mathcal{CO}$ is a unital ring homomorphism, in view
of Remark \ref{rmk:afooogen}, it is immediate {\cite[Corollary
    2.18]{Sheridan}} that if $\QH(X;\kk)_\alpha$ is isomorphic to a field (i.e. a finite extension of $\kk$) and $L$ represents a non-zero
object in $D^{\pi} \mathcal{F}(X;\kk)_\alpha$, then $L$
split-generates $D^{\pi} \fF(X;\kk)_\alpha$. The more interesting and
difficult case for proving split-generation is therefore when the
quantum cohomology is not semi-simple, that is, it does not split as a
direct sum of fields. The above corollary proves split-generation in
all cases. For example, it applies in the case of the Ostrover-Tyomkin
toric Fano four-fold whose quantum cohomology was demonstrated to be
non-semi-simple over $\CC$ \cite{OstroverTyomkin}.

Since we work with monotone symplectic manifolds, the quantum
cohomology can be defined over $\ZZ$ and we can choose to specialise
to any coefficient field. The resulting ring $\QH(X;\kk)$ may be
semi-simple for some values of $\OP{char}(\kk)$ and not for
others. This is the analogue of varieties over $\OP{Spec}(\ZZ)$ having
ramification at $(p)$:

\begin{exm}
  The quantum cohomology of $\cp{n}$ is the
  $\mathbf{Z}/{(2n+2)}$-graded ring
  \[\QH^\bullet (\cp{n};\ZZ)=\ZZ[H]/(H^{n+1}-1), \ \ |H|=2. \]
  If $\kk$ is an algebraically closed field of characteristic $p$, and
  $n+1=p^sq$ ($\gcd(p,q)=1$) then
  \[\QH(\cp{n};\kk)\cong\bigoplus_{\mu\ :\ \mu^q=1}\QH(\cp{n};\kk)_{e_\mu}\]
  where $e_{\mu}$ is the idempotent
  \[\frac{1}{q}\sum_{i=0}^{q-1}\mu^iH^{p^si}.\]
  We see that this is semisimple if and only if $p$ does not divide
  $n+1$ (so that there are $n+1$ roots of unity, and hence $n+1$ field
  summands).
\end{exm}

At first sight, this seems like an artificial way of making the
quantum cohomology non-semi-simple. However, we emphasize that Fukaya
categories over a field of characteristic $p$ may contain geometric
information about Lagrangian submanifolds which is not available if we
work over other fields:

\begin{exm}
Recall that if $L$ is a Lagrangian submanifold with nonzero self-Floer
cohomology and $\mathfrak{m}_0(L)$ is the count of Maslov 2 discs
passing through a generic point of $L$ then $2\mathfrak{m}_0(L)$ is an
eigenvalue of quantum product with $2c_1(X)$. There are many
Lagrangian submanifolds in $\cp{n}$ whose minimal Maslov number is
strictly bigger than 2. For example, $\rp{n}\subset\cp{n}$ ($n\geq
2$), $PSU(n)\subset\cp{n^2-1}$, and many more (see \cite{BedulliGori}
for some further examples). For these Lagrangians, we have
$\mathfrak{m}_0(L)=0$. The first Chern class of $\cp{n}$ is $(n+1)H$,
so its eigenvalues over $\kk$ are $\{(n+1)\mu\ :\mu^{n+1}=1\}$. Zero
is an eigenvalue precisely when $n+1\equiv 0\in\kk$, that is when
$p=\OP{char}(\kk)$ divides $n+1$. We see that these Lagrangian
submanifolds can only define non-zero objects of the Fukaya category
in characteristic $p$.
\end{exm}

\begin{rmk}
In \cite{chiang}, the authors studied an example of a Lagrangian
submanifold in $\cp{3}$ whose Floer cohomology was non-vanishing only
in characteristic 5. However, there the relevant Fukaya category was
semi-simple.
\end{rmk}

We will use our technology to prove the following split-generation
results which crucially rely on working in non-zero characteristic:

\begin{prp}
\begin{enumerate}
\item[(a)] If the real part of a toric Fano manifold is orientable,
  then it split-generates the Fukaya category in characteristic 2
  (Example \ref{exm:realtoric}).
\item[(b)] There is a Hamiltonian $U(n)$-action on the Grassmannian
  $\OP{Gr}(n,2n)$ with a free Lagrangian orbit $L$. If $n$ is a power
  of 2 then $L$ split-generates the Fukaya category in characteristic
  2 (Corollary \ref{cor:grn}).
\item[(c)] There is a Lagrangian $PSU(n)\subset\cp{n^2-1}$. If $n$ is
  a power of $p$ then $PSU(n)$ split-generates the Fukaya category of
  $\cp{n^2-1}$ in characteristic $p$ (Example \ref{exm:psun}).
\end{enumerate}
\end{prp}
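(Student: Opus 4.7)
My plan is to verify, in each case, the two hypotheses of Theorem~\ref{thm:mainthm}: realise the Lagrangian as a free orbit of a Hamiltonian action of a compact Lie group $G$ with $H^*(G)$ torsion-free at the relevant prime $p$, and show the Lagrangian is non-zero in each relevant summand. When the first fails, I fall back on Corollary~\ref{cor:toricgen} together with Remark~\ref{rmk:othergeneration}, which reduce split-generation to the non-vanishing condition alone. The main obstacle, across all three parts, is proving this non-vanishing in positive characteristic: one must control a Morse--Bott/Oh spectral sequence computing $\HF^\bullet(L,L;\kk)$, and the characteristic-specific hypotheses in each part (orientability in characteristic $2$, the condition that $n$ is a $p$-th power) are precisely what keep the Lagrangian non-zero where it would vanish over $\QQ$.

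\textbf{Part (a).} The real part $L_\RR$ of a toric Fano is not in general the orbit of a Lie group action, so I apply Corollary~\ref{cor:toricgen} to $X$: the monotone torus fibre already split-generates each summand $\fF(X;\kk)_\alpha$ in characteristic $2$. By Remark~\ref{rmk:othergeneration}, it is enough to prove that $L_\RR$ is non-zero in each such summand. Orientability gives $L_\RR$ a relative Pin-structure, hence a bona-fide brane in characteristic $2$. Non-vanishing in each summand reduces to checking that the closed--open map $\mathcal{CO}\colon \QH(X;\kk)_\alpha \to \HH^\bullet(\CF(L_\RR,L_\RR;\kk))$ sends $1_\alpha$ to a non-zero class; cohomologically one expects $\HF^\bullet(L_\RR, L_\RR; \kk) \cong H^\bullet(L_\RR;\kk)$ via a degeneration of the Oh spectral sequence in characteristic $2$, and each idempotent $1_\alpha$ then acts non-trivially.

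\textbf{Parts (b) and (c).} For (b), $G=U(n)$ has torsion-free integral cohomology, so Theorem~\ref{thm:mainthm} applies directly. Identifying $\OP{Gr}(n,2n)$ with the set of $n$-planes in $\CC^n\oplus\CC^n$, let $U(n)$ act by $g\cdot(v,w)=(gv,w)$; the orbit of the graph of the identity is a free $U(n)$-orbit which, after a suitable moment-map normalisation, is Lagrangian at the zero level. When $n=2^k$, $c_1=2nH$ vanishes in characteristic $2$, so $\QH(\OP{Gr}(n,2n);\kk)$ has the single generalised eigenvalue $0$ and $\fF$ consists of one summand; split-generation follows once $L$ is non-zero, which is again a Morse--Bott computation whose $E_2$-page is $H^\bullet(U(n);\kk)$. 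For (c), $\cp{n^2-1}$ is the space of lines in $M_n(\CC)$; $PSU(n)$ acts by left multiplication, with orbit of the line through the identity equal to $PSU(n)$. Since $H^*(PSU(n))$ has $p$-torsion when $p\mid n$, Theorem~\ref{thm:mainthm} does not apply directly when $n$ is a power of $p$. Instead, $\cp{n^2-1}$ is toric, so Corollary~\ref{cor:toricgen} and Remark~\ref{rmk:othergeneration} reduce the problem to showing $PSU(n)$ is non-zero in $\fF(\cp{n^2-1};\kk)$. When $n=p^k$ we have $(n^2-1)+1=p^{2k}$, so by the $\cp{n}$ example in the excerpt $\QH(\cp{n^2-1};\kk)$ is a single local ring and $\fF$ has a single summand; the proof is completed by a Morse--Bott computation of $\HF(PSU(n),PSU(n);\kk)$, exploiting that the minimal Maslov number of $PSU(n)\subset\cp{n^2-1}$ is strictly greater than $2$ so that $\mathfrak{m}_0(PSU(n))=0$ matches the unique generalised eigenvalue.
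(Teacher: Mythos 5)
Your overall strategy (apply Theorem~\ref{thm:mainthm} where the group has no $p$-torsion, otherwise fall back on Corollary~\ref{cor:toricgen} plus a non-vanishing argument) matches the paper's, and your reduction of part (c) to the fact that $p^{2k}$ has a single $q$-th root of unity is correct. But each part has a gap where the non-vanishing or one-summand claim is asserted rather than proved, and in part (b) the asserted step is actually false as stated.

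In part (b) you argue that since $c_1$ has the single eigenvalue $0$ in characteristic $2$, the Fukaya category has a single summand. This conflates the $c_1$-eigenvalue decomposition with the block decomposition into local rings, which is strictly finer; the paper warns against exactly this in the quadric 3-fold remark (in characteristic $3$ the quadric has a single $c_1$-eigenvalue but two blocks, and the $SU(2)$-orbit generates only one of them). The paper instead proves that $\QH(\OP{Gr}(2^s,2^{s+1});\kk)$ is a local ring by an honest computation: Gepner's potential $\tilde{W}=\sum(\lambda_i^{N+1}/(N+1)+(-1)^r\lambda_i)$ has critical points $\lambda_i^N=1$, and in characteristic $2$ with $N=2^{s+1}$ there is a unique $N$-th root of unity, so the critical locus of $W$ is the single point $\Phi(1,\ldots,1)=(0,\ldots,0,1)$. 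Without this (or an equivalent) argument your one-summand claim is unsupported. In part (a) the analogous gap is your final sentence: knowing $\HF(L_\RR,L_\RR;\kk)\cong H^*(L_\RR;\kk)\neq 0$ only places $L_\RR$ in \emph{some} summand; to get it into \emph{every} summand the paper invokes Tonkonog's identity $\ker(CO^0)=\ker(\OP{Frob})$, so that no idempotent $e_\alpha=e_\alpha^2$ can die under $CO^0$. "Each idempotent then acts non-trivially" is precisely the statement that needs proof. In part (c), the non-vanishing of $\HF(PSU(p^k),PSU(p^k);\kk)$ is not a consequence of $\mathfrak{m}_0=0$ matching the eigenvalue (that is only a necessary condition for non-vanishing); the paper deduces it from the Biran--Cornea criterion, using that $H^*(PSU(p^k);\FF_p)=\kk[y]/(y^{p^k})\otimes\Lambda(x_1,\ldots,x_{2p^k-3})$ is generated in degrees strictly below $2p^k-2$ while the minimal Maslov number is $2p^k$. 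You should supply these three ingredients; the surrounding architecture of your argument is otherwise sound.
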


These results follow directly from our split-generation criterion and
computations of Floer cohomology (due respectively to Haug
\cite{Haug}, Oh \cite{OhIII} and the authors (example
\ref{exm:psun})).

Another interesting consequence of our results is a non-formality
result for the quantum cohomology ring when it fails to be
semi-simple. Recall that we can equip $\QH(X;\kk)$ with an
$A_\infty$-structure by applying the homological perturbation lemma to
$\CF(\Delta,\Delta;\kk)$.  This $A_\infty$-algebra, which we denote by
$\mathcal{QH}(X;\kk)$, seems only to have been computed in cases where
it is intrinsically formal. There were early expectations
\cite{RuanTian} that it should be formal when $X$ is K\"{a}hler, at
least over $\CC$. However, we show:

\begin{prp}[Corollary \ref{cor:nonform}]
If $X$ is a toric Fano and $\kk$ is an algebraically closed field such
that $\QH(X;\kk)$ is not semisimple as a $\ZZ/2$-graded algebra, then
$\mathcal{QH}(X;\kk)$ is not quasi-isomorphic to $\QH(X;\kk)$.
\end{prp}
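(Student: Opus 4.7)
The strategy is to argue by contradiction, using our split-generation result for toric Fano varieties together with the closed-open isomorphism, upgraded to the $A_\infty$-level. Suppose $\mathcal{QH}(X;\kk)$ were formal, i.e., $A_\infty$-quasi-isomorphic to its cohomology $\QH(X;\kk)$. Non-semi-simplicity of $\QH(X;\kk)$ produces an idempotent summand $\QH(X;\kk)_\alpha$ which is a local Artinian $\kk$-algebra but not a field.

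By Corollary \ref{cor:toricgen}, the barycentric monotone torus fibre $L$ equipped with an appropriate flat $\kk$-line bundle $\xi_\alpha$ split-generates the summand $D^\pi \fF(X;\kk)_\alpha$. Upgrading Corollary \ref{cor:aboucrit} to the $A_\infty$-level (split-generation yields an $A_\infty$-Morita equivalence, and Hochschild cohomology is Morita-invariant as an $A_\infty$-algebra), we obtain a quasi-isomorphism of $A_\infty$-algebras
\[\mathcal{QH}(X;\kk)_\alpha \simeq \HH^\bullet(\CF(L_\alpha, L_\alpha;\kk)).\]
Under the formality assumption, the right-hand side must then itself be a formal $A_\infty$-algebra, quasi-isomorphic to the commutative ring $\QH(X;\kk)_\alpha$.

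The next step is to analyse $A := \CF(L_\alpha, L_\alpha;\kk)$ as an $A_\infty$-algebra: its cohomology is the exterior algebra $\Lambda^\bullet V$ on $V := H^1(L;\kk)$, and its higher operations $\mu^n$ are determined by the Taylor expansion of the Landau-Ginzburg superpotential $W$ at the critical point $\alpha$, as known from the toric computations of Fukaya-Oh-Ohta-Ono and Cho-Oh. Observe first that $A$ itself cannot be formal: if it were, then the Hochschild-Kostant-Rosenberg isomorphism would give $\HH^\bullet(A) \cong \Lambda^\bullet V \otimes \Sym^\bullet(V^*)$, which is infinite-dimensional, contradicting the finite-dimensionality of $\HH^\bullet(A) \cong \QH(X;\kk)_\alpha$. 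So $\mu^n$ is non-trivial for some $n \geq 3$, reflecting that $\alpha$ is a non-Morse critical point of $W$; this non-Morse-ness is precisely equivalent to $\QH(X;\kk)_\alpha$ being a non-field local ring.

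The main obstacle is the final step: showing that the non-trivial higher operations on $A$ propagate to non-trivial higher operations on $\HH^\bullet(A)$, so as to contradict the formality conclusion of the second paragraph. The cleanest route is to exhibit an explicit non-vanishing Massey product on $\HH^\bullet(A)$ using the matrix-factorization description of $A$ at the non-Morse critical point: the higher residues of $W$ at $\alpha$ are known to induce non-trivial $A_\infty$-operations on the associated Jacobian ring. Pinning down such an explicit Massey product, or equivalently identifying $\HH^\bullet(A)$ via Koszul duality with a non-formal model whose obstruction class is encoded by the cubic-and-higher part of $W$ at $\alpha$, is the computational heart of the argument.
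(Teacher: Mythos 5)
There is a genuine gap, and you have flagged it yourself: the final step, ``showing that the non-trivial higher operations on $A=\CF(L_\alpha,L_\alpha;\kk)$ propagate to non-trivial higher operations on $\HH^\bullet(A)$,'' is not a routine computation that can be deferred --- it is the whole difficulty, and there is no general mechanism by which non-formality of an $A_\infty$-algebra forces non-formality of its Hochschild cochain algebra. Your detour through $\CF(L_\alpha,L_\alpha)$ also rests on a factual slip: the cohomology ring $\HF((L,\xi_\alpha),(L,\xi_\alpha);\kk)$ is not the exterior algebra $\Lambda^\bullet V$ but the Clifford algebra of the Hessian of $W$ at the critical point $\alpha$ (the additive isomorphism with $H^*(T^n;\kk)$ does not respect products), so the Hochschild--Kostant--Rosenberg computation $\HH^\bullet(\Lambda^\bullet V)\cong\Lambda^\bullet V\otimes\Sym^\bullet(V^*)$ does not apply as stated; at a Morse critical point the Clifford algebra is semisimple with one-dimensional Hochschild cohomology and no contradiction arises. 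Finally, the quasi-isomorphism of $A_\infty$-algebras $\mathcal{QH}(X;\kk)_\alpha\simeq\HH^\bullet(\CF(L_\alpha,L_\alpha;\kk))$ in your second paragraph requires a chain-level enhancement of the closed-open map that goes beyond the cohomology-level statement of Corollary \ref{cor:aboucrit}.

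The fix is to apply the infinite-dimensionality argument directly to the commutative algebra $\QH(X;\kk)$ rather than to the Floer algebra of the torus. The relevant algebraic input is a theorem of Avramov--Iyengar (Theorem \ref{thm:avramoviyengar}): a finite-dimensional commutative $\kk$-algebra which is not a product of separable field extensions has $\HH^r\neq 0$ for infinitely many $r$. Hence if $\mathcal{QH}(X;\kk)$ were formal, $\HH^\bullet(\mathcal{QH}(X;\kk))=\HH^\bullet(\QH(X;\kk))$ would be infinite-dimensional (the length grading survives formality, so the infinitely many nonzero groups cannot cancel). You already possess the contradiction vehicle: applying Corollary \ref{cor:toricgen} to $X^-\times X$ (itself toric Fano) shows $\Delta$ lies in the subcategory split-generated by product tori, so by Corollary \ref{cor:aboucrit} the Hochschild cohomology of $\langle\Delta\rangle$ --- which equals $\HH^\bullet(\mathcal{QH}(X;\kk))$ via the full and faithful embedding of $\langle\Delta\rangle$ into modules over $\mathcal{QH}(X;\kk)$ --- is a subspace of $\QH(X^-\times X;\kk)$ and hence finite-dimensional. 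This route needs only the cohomology-level closed-open isomorphism and avoids any analysis of the Floer $A_\infty$-structure of the torus fibre.
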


For example, the quantum cohomology $A_\infty$-algebra of the
Ostrover-Tyomkin toric Fano four-fold \cite{OstroverTyomkin} is not
formal over $\CC$.

\subsection{Outline of paper}

In Section \ref{sct:formality}, we collect together results on the
formality of compact Lie groups and their based loop spaces over a
field of arbitrary characteristic; in particular, we show that Lie
groups are formal in characteristic $p$ if their cohomology ring has
no $p$-torsion.

In Section \ref{sct:twcpx}, we develop the notion of a Koszul twisted
complex and show that, in the derived wrapped Fukaya category of the
cotangent bundle of a compact Lie group with no $p$-torsion, the
zero-section can be written as a Koszul twisted complex built out of
the cotangent fibre.

In Section \ref{sct:qh}, we review the decomposition of the quantum
cohomology into idempotent summands and the corresponding
decomposition of the Fukaya category.

In Section \ref{sct:quilt}, we review the standard quilt theory for
constructing $A_\infty$-functors from Lagrangian correspondences and
adapt it to the situation where some of the Lagrangians are
non-compact (but the correspondences are still compact).

In Section \ref{sct:ham}, we state and prove all the main results on
generation.

Finally, in Section \ref{sct:examples}, we find split-generators in
some explicit examples and give further applications.

{\bf Acknowledgments: } We thank Denis Auroux, Paul Biran, Janko
Latschev, Ivan Smith, Dmitry Tonkonog and Chris Woodward for their
interest in this work. We also thank an anonymous referee for their
helpful comments and corrections. Y.L. is partially supported by the
Royal Society and an NSF grant DMS-1509141.

\section{Formality results for compact Lie groups}\label{sct:formality}

In this section, we collect together formality results for $C^*(G)$
and $C_{-*}(\Omega G)$ as $A_\infty$-algebras, over a field of
arbitrary characteristic, where $G$ is a compact, connected Lie group.

\subsection{Lie groups and products of spheres}

Let $G$ be a compact, connected Lie group (over $\RR$). It is
well-known that any such $G$ is a finite quotient of a product of a torus 
and a simple non-abelian Lie group by a central subgroup. Recall that a
compact connected Lie group $G$ is called \emph{simple} if it is
non-abelian and has no closed proper normal subgroup of positive
dimension. For convenience, let us also recall the classification
theorem of connected, compact, simple Lie groups as given by the
following table:

\begin{figure}[htbp!] 
\centering 
\begin{tikzpicture}1
\matrix (mymatrix) [matrix of nodes, nodes in empty cells, text height=1.5ex, 
	column 1/.style={minimum width=3.5},
	    column 2/.style={minimum width=4.5},
	        column 3/.style={minimum width=6.6},
		    column 4/.style={minimum width=4.5} , align=right]
{
                &    $\dim G$    & Linear group & Universal cover & Centre &  $\frac{\dim G}{\OP{rank} G}-1$ \\
$A_n (n\geq 1)$ & $n(n+2)$  & $SU(n+1)$      &                 & $\ZZ_{n+1}$& $n+1$&\\
$B_n (n\geq 2)$ & $n(2n+1)$ & $SO(2n+1)$     & $Spin(2n+1)$      & $\ZZ_2$ & $2n$ &\\
$C_n (n\geq 3)$ & $n(2n+1)$ & $Sp(n)$        &                 & $\ZZ_2$ & $2n$ &  \\
$D_n (n\geq 4)$ & $n(2n-1)$ & $SO(2n)$       & $Spin(2n)$        & $\ZZ_2 \ltimes \ZZ_2$ & $2n-2$ &\\
$ G_2 $         & $14$      & & &1 &6 & \\
$ F_4 $         & $52$      & & &1  &12 & \\
$ E_6 $         & $78$      & & &$\ZZ_3$ &12 & \\
$ E_7 $         & $133$     & & &$\ZZ_2$ &18& \\
$ E_8 $         & $248$     & & &1 & 30&\\ };      
\draw (mymatrix-1-1.south west) ++ (-0.7cm,0) -- (mymatrix-1-6.south east);
\end{tikzpicture}
\caption{Classification of connected, compact, simple Lie groups (The
  centre of $SO(2n)$ is $\ZZ_4$ if $n$ is odd, $\ZZ_2 \oplus \ZZ_2$ if
  $n$ is even) }

\label{classification}
\end{figure}

There is an extensive literature on the homotopy theory of Lie
groups. We recommend the excellent survey article \cite{mimura} by
Mimura. In particular, let us recall the classical theorem of Hopf
\cite{hopf} which states that, over $\QQ$, the singular cohomology of
a compact, simple group $G$ is given by: \[ H^*(G;\QQ) = \Lambda (
x_{2n_1-1},x_{2n_2-1}, \ldots x_{2n_l-1}) \ \text{\ with\ }
|x_{2n_i-1}| = 2n_i - 1 \] where $\text{rank\ } G=l$, $\text{dim\ } G
= \sum_{i=1}^l (2n_i-1)$, and $(n_1,\ldots, n_l)$ is called the type
of $G$. In fact, $G$ has the rational homotopy type of a product of
odd-dimensional spheres:
\[ G \underset{\QQ}{\cong} \prod_{i=1}^l S^{2n_i-1}  \]
Indeed, in \cite{serre}, Serre constructs a map $f : G \to \prod
S^{2n_i-1}$ such that the induced map of dg-algebras:\[ f^{*} :
C^{*}\left( \prod S^{2n_i-1}; \QQ\right) \to C^*(G; \QQ)\] is a
quasi-isomorphism.

In characteristic $p>0$, the story is more complicated. First of all,
in general, cohomology groups of compact, connected, simple Lie groups
can have torsion (see \cite{mimura}). Nonetheless, the classical Lie
groups $U(n),SU(n), Sp(n)$ have no torsion:
\begin{align*}
	H^*(U(n) ; \ZZ)  &= \Lambda(x_1,x_3,\ldots,x_{2n-1}) \\
	H^*(SU(n) ; \ZZ ) &= \Lambda(x_3,x_5,\ldots,x_{2n-1}) \\
	H^*(Sp(n); \ZZ) &= \Lambda(x_3,x_7,\ldots,x_{4n-1}) 
\end{align*} 

However, even for these groups, the mod $p$ homotopy type is not the
same as the mod $p$ homotopy type of a product of odd-dimensional
spheres for all $p$ as can be detected by non-triviality of Steenrod
operations. For example, $SU(3)$ is not homotopy equivalent to $S^3
\times S^5$ mod $2$.

In general, for $p$ sufficiently large, Serre \cite{serre} (for
classical groups) and Kumpel \cite{kumpel} (for exceptional groups),
proved that the mod $p$ homotopy type of a compact connected simple
Lie group $G$ is the same as a product of odd-spheres:

\begin{thm} (Serre \cite{serre}, Kumpel \cite{kumpel})
  Let $G$ be a compact connected simple Lie group of type
  $(n_1,\ldots,n_l)$, and $p$ be a prime then
  \[ G \underset{p} \cong \prod S^{2n_i-1} \]
  if and only if $p \geq ( \text{dim\ } G / \text{rank\ } G) -1$.
\end{thm}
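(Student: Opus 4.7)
The plan is to prove the two implications separately, each reducing via the classification (Figure \ref{classification}) to an explicit computation in the mod $p$ cohomology ring $H^*(G;\FF_p)$ and its Steenrod algebra action, as tabulated in \cite{mimura}. A preliminary case-by-case inspection of Figure \ref{classification} shows that the threshold $p \geq \dim G/\OP{rank} G - 1$ is in each row the same as the condition $p \geq n_l$, where $n_l = \max_i n_i$ is the largest fundamental degree of $G$.

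For the \emph{only if} direction, assume $G \underset{p}{\cong} \prod_{i=1}^{l} S^{2n_i - 1}$. The mod $p$ cohomology of the right-hand side is the exterior algebra $\Lambda(x_1,\ldots,x_l)$ on primitive generators $x_i$ in degrees $2n_i - 1$, on which every reduced Steenrod power $P^j$ with $j \geq 1$ acts trivially: on a single sphere factor $S^{2n_i - 1}$ there is no room in cohomology to receive $P^j x_i$, and by the K\"unneth formula this persists in the product. A mod $p$ equivalence therefore forces $H^*(G;\FF_p)$ to also be a primitively generated exterior algebra in the right degrees and to have trivial reduced-power action on its generators. For each simple $G$ and each prime $p < n_l$ at least one of these conditions fails: either $p$ is a torsion prime of $G$ and the total $\FF_p$-ranks of cohomology already disagree, or $H^*(G;\FF_p)$ is the expected exterior algebra but Borel's explicit description exhibits a nontrivial operation, the prototype being $P^1 x_{2n_i - 1} = c\, x_{2n_i - 1 + 2(p-1)}$ with $c \in \FF_p^{\times}$ whenever $n_i + p - 1$ equals some $n_j \leq n_l$. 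Running through the nine families of Figure \ref{classification} produces a concrete obstruction in each case.

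For the \emph{if} direction, fix a prime $p \geq n_l$. The aim is to construct a mod $p$ equivalence $f : \prod_{i=1}^{l} S^{2n_i - 1} \to G$ one sphere factor at a time. Using Serre's mod-$\mathcal{C}$ theory (with $\mathcal{C}$ the class of finite abelian groups of order prime to $p$), the condition $p \geq n_l$ forces the $p$-primary parts of the relevant low-dimensional homotopy groups of spheres to vanish, so one inductively produces based maps $f_i : S^{2n_i - 1} \to G$ whose mod $p$ Hurewicz images form a basis of the primitive subspace of $H_{*}(G;\FF_p)$. Multiply them using the $H$-space structure of $G$ to define $f := \prod_i f_i$. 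Then $f^*$ carries each exterior generator $x_i \in H^*(G;\FF_p)$ to the fundamental class of the $i$-th sphere factor, hence is injective on primitives; since both source and target are exterior algebras of total mod $p$ dimension $2^l$ with matching graded dimensions, $f^*$ is an isomorphism. The mod-$\mathcal{C}$ Whitehead theorem then upgrades $f$ to a mod $p$ homotopy equivalence.

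The main obstacle is the case-by-case nature of the argument. For the classical families $A_n, B_n, C_n, D_n$ Serre's original argument in \cite{serre} handles both directions uniformly, using unitary embeddings and explicit knowledge of the Steenrod action on $H^*(U(n);\FF_p)$. For $G_2, F_4, E_6, E_7, E_8$ such embeddings are not available, and one has to compute the Steenrod algebra action on $H^*(G;\FF_p)$ individually via Borel's transgressive generators and then compare Poincar\'e series; this is precisely the content of Kumpel \cite{kumpel}. Once both implications are verified for every row of Figure \ref{classification}, the theorem follows.
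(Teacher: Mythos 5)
The paper does not prove this statement: it is quoted verbatim from Serre and Kumpel as a classical input, so there is no internal proof to compare against. Judged on its own, your outline is the standard argument and its skeleton is sound. In particular, your preliminary reduction is correct: since $\dim G=\sum_i(2n_i-1)$ and $\sum_i(n_i-1)$ equals the number of positive roots, one gets $\dim G/\OP{rank}G-1=n_l$, the Coxeter number, and this checks out against every row of Figure \ref{classification}. The ``only if'' direction via triviality of reduced powers on a product of spheres (by the Cartan formula and K\"unneth) is the right obstruction, and the ``if'' direction via mod-$\mathcal{C}$ theory, Hurewicz images of maps $S^{2n_i-1}\to G$ hitting the primitives, and multiplication in the $H$-space is how Serre and (for the exceptional groups) Kumpel proceed.

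The one place where your write-up overstates what is automatic is the prototype obstruction: it is not true that $P^1x_{2n_i-1}=c\,x_{2n_i-1+2(p-1)}$ with $c\in\FF_p^\times$ \emph{whenever} $n_i+p-1$ equals some $n_j$. The coefficient is a specific binomial coefficient mod $p$ (e.g.\ $P^kx_{2i-1}=\binom{i-1}{k}x_{2i-1+2k(p-1)}$ in $H^*(U(n);\FF_p)$) and can vanish even when the degrees line up, so in general one must hunt through higher powers $P^j$, and for some exceptional groups and primes even this requires secondary operations or Poincar\'e-series comparisons. That verification is exactly the nontrivial content of the cited papers, so your proposal is best read as a correct roadmap that defers the substantive case-by-case computations to Serre and Kumpel rather than a self-contained proof; since the paper itself only cites the result, that is an acceptable level of detail here.
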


In particular, for $p \geq ( \text{dim\ } G / \text{rank\ } G) -1$
there exists a map $f: G \to \prod S^{2n_i-1}$ such that the induced
map of dg-algebras:
\[ f^{*} : C^{*}\left( \prod S^{2n_i-1}; \FF_p\right) \to C^*(G; \FF_p). \]
is a quasi-isomorphism.

\subsection{Formality}

Throughout, we will treat a dg-algebra as a special case of an
$A_\infty$-algebra which has vanishing higher products. In this paper,
the following notion of formality for $A_\infty$-algebras will play a
key role:

\begin{dfn}
  Let $C^*$ be a graded $A_\infty$-algebra defined over $\ZZ$. Let us
  write $C^*_{\QQ}$ and $C^*_{p}$ for $p$ prime, the graded
  $A_\infty$-algebras obtained from $C^*$ by tensoring it (in the
  derived sense) with $\QQ$ and $\FF_p$ respectively. We say that
  $C^*$ is formal mod $p$ if there exists an
  $A_\infty$-quasi-isomorphism:
  \[ H(C^*_p) \to C^*_p \] 
  where we view $H(C^*)$ as an $A_\infty$-algebra with vanishing
  differential and higher products. Similarly, we say that $C^*$ is
  formal over $\QQ$ if the same condition holds over $\QQ$.

  For a topological space $X$, we say that $X$ is formal mod $p$
  (respectively over $\QQ$) if the singular cochain complex $C^*(X)$
  (viewed as an $A_\infty$-algebra) is formal mod $p$ (respectively
  over $\QQ)$.
\end{dfn}

\begin{rmk}
  We note that over $\QQ$ the singular cochain complex $C^*(X)$ for a
  topological space can in fact be modelled by a $C_\infty$-algebra
  $X$ and for simply connected spaces of finite-type the $C_\infty$
  quasi-isomorphism type of this is a complete invariant of the
  rational homotopy type of $X$ (see \cite{kadeishvili}). On the other
  hand, we do not know if two $C_\infty$-algebras over $\QQ$ that are
  quasi-isomorphic as $A_\infty$-algebras are also quasi-isomorphic as
  $C_\infty$-algebras (most probably, this is not true).
\end{rmk}

\begin{exm} \label{spheres}
  A sphere $S^n$ is formal mod $p$ for all $p$, and over
  $\QQ$. Picking cochain representative for the fundamental class
  gives us a linear map that is a quasi-isomorphism:
  \[ \Lambda (x) = H^*(S^n) \to C^*(S^n), \text{ with } |x|=n \]
  Furthermore, using a normalized simplicial chain model for
  $C^*(S^n)$ one sees that $x^2=0$ at the chain
  level. Hence, this map can be taken to be a dg-algebra map.
\end{exm}
\begin{exm} \label{product}
  If $X$ and $Y$ are formal mod $p$ (respectively over $\QQ$), then $X
  \times Y$ is formal mod $p$ (respectively over $\QQ$). This follows
  from the Eilenberg-Zilber theorem which gives a dg-algebra
  quasi-isomorphism over $\ZZ$ {\cite[Section 17]{eilenbergmoore}}:
  \[ C^*(X \times Y) \to C^*(X) \otimes C^*(Y) \]
  and the K\"unneth theorem, which identifies $H^*(X \times Y)$ with
  $H^*(X) \otimes H^*(Y)$ over a field. We remark that the
  Eilenberg-Zilber theorem establishes quasi-isomorphisms between
  $C^*(X) \otimes C^*(Y)$ and $C^*(X \times Y)$ in each direction but
  only one of these can be taken to be a dg-algebra
  quasi-isomorphism. Indeed, there is an $A_\infty$-algebra
  quasi-isomorphism from $C^*(X) \otimes C^*(Y) \to C^*(X \times Y)$
  but not a dg-algebra quasi-isomorphism (cf. Munkholm {\cite[Theorem
      2.6]{munkholm}}).

  As a particular example, let us note that $G=T^n$ is formal mod $p$
  for all $p$, and over $\QQ$.
\end{exm}

As a consequence of the previous discussion, we immediately conclude
the following:

\begin{cor} \label{aboveresult}
  A connected, compact, simple Lie group $G$ is formal over $\QQ$, and
  it is formal mod $p$ for $p \geq \text{dim } G / \text{rank G} - 1$.
\end{cor}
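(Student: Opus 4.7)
The plan is to chain together the two preceding results (Serre's rational theorem and the Serre--Kumpel mod $p$ theorem) with Examples \ref{spheres} and \ref{product} to produce an explicit $A_\infty$-quasi-isomorphism from the cohomology to the cochains.

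First I would treat the rational case. By Serre \cite{serre}, for any compact connected simple Lie group $G$ of type $(n_1, \ldots, n_l)$ there exists a map $f: G \to \prod_{i=1}^{l} S^{2n_i - 1}$ inducing a dg-algebra quasi-isomorphism $f^* : C^*\bigl(\prod S^{2n_i-1}; \QQ\bigr) \to C^*(G; \QQ)$. By Example \ref{spheres} each odd sphere admits a dg-algebra quasi-isomorphism $H^*(S^{2n_i - 1}; \QQ) \to C^*(S^{2n_i-1}; \QQ)$, and by Example \ref{product} these assemble (via Eilenberg--Zilber and K\"unneth) into an $A_\infty$-quasi-isomorphism $H^*\bigl(\prod S^{2n_i - 1}; \QQ\bigr) \to C^*\bigl(\prod S^{2n_i-1}; \QQ\bigr)$. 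Composing with $f^*$ and using that $f^*$ is also an isomorphism on cohomology yields the desired quasi-isomorphism $H^*(G; \QQ) \to C^*(G; \QQ)$.

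For the mod $p$ case, the argument is formally identical: the hypothesis $p \geq (\dim G / \OP{rank} G) - 1$ is precisely the range in which the Serre--Kumpel theorem guarantees a map $f: G \to \prod S^{2n_i - 1}$ inducing an $\FF_p$-dg-algebra quasi-isomorphism $f^* : C^*\bigl(\prod S^{2n_i-1}; \FF_p\bigr) \to C^*(G; \FF_p)$. Combining this with the mod $p$ formality of each sphere (Example \ref{spheres}) and the mod $p$ formality of products (Example \ref{product}) gives an $A_\infty$-quasi-isomorphism $H^*(G; \FF_p) \to C^*(G; \FF_p)$, establishing mod $p$ formality in the asserted range.

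There is essentially no obstacle: all the nontrivial input (construction of the map $f$ to a product of spheres realising the cohomology isomorphism) has already been quoted, and the only thing to check is that composing an $A_\infty$-quasi-isomorphism with a dg-algebra quasi-isomorphism gives an $A_\infty$-quasi-isomorphism, which is standard. The mild subtlety worth flagging is in Example \ref{product}: the map assembling the sphere formality quasi-isomorphisms into one for the product is only an $A_\infty$-quasi-isomorphism, not a dg-algebra one, which is why the conclusion of the corollary is stated in the $A_\infty$-category rather than in the dg-category. This is consistent with, and in fact the reason for adopting, the $A_\infty$-notion of formality introduced in the definition above.
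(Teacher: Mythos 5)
Your proposal is correct and is exactly the argument the paper intends: the corollary is stated as an immediate consequence of the preceding discussion, namely the Serre and Serre--Kumpel quasi-isomorphisms $f^*\colon C^*(\prod S^{2n_i-1}) \to C^*(G)$ combined with Examples \ref{spheres} and \ref{product}. Your remark about the direction of the Eilenberg--Zilber map (and hence why the conclusion lives in the $A_\infty$-world rather than the dg-world) is the right subtlety to flag and matches the paper's own discussion in Example \ref{product}.
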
 

\begin{rmk}
 Another way to see that Lie groups are formal in characteristic 0 is
 to consider the dg subalgebra of the de Rham complex consisting of
 invariant forms. These are precisely the harmonic forms, with respect
 to an invariant metric, therefore the subspace of harmonic forms is
 closed under multiplication. By the Hodge theorem, the inclusion of
 the subspace of harmonic forms is a quasi-isomorphic embedding of the
 cohomology into the de Rham cochains, proving formality. (The de Rham
 complex is {\em geometrically formal}.)
\end{rmk}

Note that Corollary \ref{aboveresult} is obtained via the
characterization of mod $p$ homotopy type of the group $G$. By a
theorem of Mandell \cite{mandell}, the mod $p$ homotopy type of a
simply-connected finite-type space $X$ can be recovered from the
$E_\infty$-algebra structure on $C^*(X;\FF_p)$. However, our interest
is in a much coarser invariant of $X$, namely the $A_\infty$-algebra
(i.e $E_1$-algebra) structure of $C^*(X;\FF_p)$ up to
quasi-isomorphism of $A_\infty$-algebras.  In particular, note that
the dg-algebra structure of $C^*(X;\FF_p)$ is not enough to construct
Steenrod operations.  Therefore, it is possible for a space $X$ that
$C^*(X;\FF_p)$ is formal as an $A_\infty$-algebra mod $p$ without
being formal as an $E_\infty$-algebra mod $p$. We next address this
delicate formality question for the connected, compact, simple Lie
groups $G$ and primes $p$ such that $H^*(G)$ does not have
$p$-torsion. In view of the above corollary, our result is of
particular interest when $p < \text{dim } G/ \text{rank G}-1$.

We recall that $\Omega G$ is the based loop space of $G$. We denote by
$C_*(\Omega G)$ the normalized singular chain complex. To be consistent, we
prefer to use instead the cohomologically graded complex
$C_{-*}(\Omega G)$ which is concentrated in non-positive degrees. This
has a dg-algebra structure coming from the Pontryagin product on
$\Omega G$ given by concatenation of loops.

\begin{thm} \label{thm:formal}
Let $G$ be a connected, compact, simple Lie group and $p$ be a prime
such that $H^*(G)$ has no $p$-torsion:
\begin{enumerate}[(i)]
\item The $A_\infty$-algebra $C_{-*}(\Omega G)$ is formal mod $p$.
\item The $A_\infty$-algebra $C^*(G)$ is formal mod $p$. 
\end{enumerate}
\end{thm}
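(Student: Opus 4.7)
By Hopf's theorem, when $H^*(G;\ZZ)$ has no $p$-torsion one has $H^*(G;\FF_p) = \Lambda(x_{2n_i-1})$, an exterior algebra on odd-degree primitive generators; dually, transgression in the path-loop fibration $\Omega G \to PG \to G$ (which collapses for degree reasons) yields $H_{-*}(\Omega G;\FF_p) = \FF_p[y_{2n_i-2}]$, a polynomial algebra on even-degree generators. The plan is to prove (i) directly by a James-construction argument and then deduce (ii) from (i) via the Rothenberg--Steenrod identification and Koszul duality.

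\textbf{Plan for (i).} I would represent each generator $y_i \in H_{2n_i-2}(\Omega G;\FF_p)$ by a based map $\sigma_i : S^{2n_i-2} \to \Omega G$. Because $\Omega G$ is a topological monoid, the James model $J S^{2n_i-2} \simeq \Omega S^{2n_i-1}$ extends $\sigma_i$ canonically (up to homotopy) to an H-map $\tilde\sigma_i : \Omega S^{2n_i-1} \to \Omega G$. Combining these by the Pontryagin product produces $F : \prod_i \Omega S^{2n_i-1} \to \Omega G$, $F(z_1,\ldots,z_l) = \tilde\sigma_1(z_1)\cdots\tilde\sigma_l(z_l)$; since $G$ is an H-space, $\Omega G$ is homotopy commutative by Eckmann--Hilton, and $F$ becomes an H-map up to a coherent system of homotopies. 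On $\FF_p$-homology $F_*$ sends each $y_i$ to $y_i$; as both Pontryagin rings are polynomial on $\{y_i\}$, $F_*$ is a ring isomorphism. Passing to chains, $F$ induces an $A_\infty$-algebra quasi-isomorphism $C_{-*}(\prod_i \Omega S^{2n_i-1};\FF_p) \to C_{-*}(\Omega G;\FF_p)$. The source is formal because each factor $C_{-*}(\Omega S^{2n_i-1};\FF_p)$ is quasi-isomorphic as a dg-algebra to $\FF_p[y_{2n_i-2}]$ via the classical James model, and formality is preserved under tensor products by Example~\ref{product}. This gives (i).

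\textbf{Plan for (ii).} I would first pass to the simply-connected cover $\tilde G$, observing that $G$ and $\tilde G$ have quasi-isomorphic mod-$p$ cochains: $\pi_1(G)$ is finite abelian, and if it contained $p$-torsion then $H_1(G;\ZZ) = \pi_1(G)$ would too, producing $p$-torsion in $H^2(G;\ZZ)$ by universal coefficients and contradicting the hypothesis. I would then invoke the Rothenberg--Steenrod / Eilenberg--Moore equivalence $C^*(\tilde G;\FF_p) \simeq \OP{RHom}_{C_{-*}(\Omega \tilde G;\FF_p)}(\FF_p, \FF_p)$ as $A_\infty$-algebras, valid because $\tilde G$ is simply connected. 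Substituting the formal model from (i), this becomes $\OP{RHom}_{\FF_p[y_{2n_i-2}]}(\FF_p, \FF_p)$, and the standard Koszul resolution computes the right-hand side as the exterior algebra $\Lambda(x_{2n_i-1})$, yielding the desired formality of $C^*(G;\FF_p)$.

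\textbf{Main obstacle.} The delicate step is the chain-level coherence in (i): the map $F$ must induce an $A_\infty$-algebra morphism, not merely a map of chain complexes. Concretely, one must promote the homotopy commutativity of $\Omega G$ and the H-map property of the $\tilde\sigma_i$ to a coherent system of higher homotopies, so that the ordered product defining $F$ becomes a multiplicative map up to all higher operations. This is standard once one works with strictified topological monoid models of $\Omega G$ and $\prod_i \Omega S^{2n_i-1}$ and uses that the singular chain functor with Pontryagin product takes H-maps up to coherent homotopy to $A_\infty$-algebra maps; it is, however, the technically most subtle ingredient in the argument.
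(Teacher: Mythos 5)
Your overall architecture (prove formality of $C_{-*}(\Omega G;\FF_p)$ first, then deduce formality of $C^*(G;\FF_p)$ via the Eilenberg--Moore equivalence and Koszul duality) matches the paper's, but your proof of part (i) has a genuine gap at its very first step: you cannot, in general, represent the polynomial generators $y_i\in H_{2n_i-2}(\Omega G;\FF_p)$ by based maps $\sigma_i\colon S^{2n_i-2}\to\Omega G$. By Bott's divisibility theorem, the Hurewicz image of a generator of $\pi_{2k}(\Omega SU)\cong\pi_{2k}(BU)$ in $H_{2k}(BU;\ZZ)$ is $(k-1)!$ times a polynomial generator modulo decomposables; hence for $p\leq k-1$ every spherical class in degree $2k$ is decomposable mod $p$ and no choice of polynomial generator is spherical. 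Already for $G=SU(4)$ and $p=2$ (a torsion-free case covered by the theorem), the degree-$6$ generator of $H_{-*}(\Omega SU(4);\FF_2)$ is not spherical, so your map $F$ cannot be constructed. This is not an accident: if all generators were spherical and $F$ existed as a mod $p$ equivalence of H-spaces, you would essentially be reproducing the Serre--Kumpel product-of-spheres decomposition, which the paper quotes as holding \emph{only} for $p\geq\dim G/\mathrm{rank}\,G-1$; the whole point of the theorem is the range $p<\dim G/\mathrm{rank}\,G-1$. The paper avoids this by never asking for geometric representatives: it chooses arbitrary cycle representatives $c_i\in C_{-*}(\Omega G)$ of the $y_i$, maps $\kk[y_1,\dots,y_n]\to C_{-*}(\Omega G)^{\otimes n}$ by $y_i\mapsto 1\otimes\cdots\otimes c_i\otimes\cdots\otimes 1$ (a strict dg-algebra map since distinct tensor factors commute), and composes with the iterated $A_\infty$-multiplication $\Phi^{[n]}\colon C_{-*}(\Omega G)^{\otimes n}\to C_{-*}(\Omega G)$ furnished by the strongly homotopy commutative structure of Sugawara and Clark (Proposition \ref{shm}).

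Two smaller points. First, even setting aside sphericality, your proposed fix for the coherence problem --- ``strictified topological monoid models'' --- does not address it: strictifying associativity does nothing for the \emph{commutativity} needed to reorder $\tilde\sigma_1(z_1)\cdots\tilde\sigma_l(z_l)\cdot\tilde\sigma_1(w_1)\cdots\tilde\sigma_l(w_l)$; the required input is homotopy commutativity with all higher coherences, i.e.\ exactly the shc ($\Omega^2$-space) structure the paper invokes. Second, in part (ii) saying that the Koszul resolution ``computes the right-hand side as the exterior algebra'' only identifies the cohomology \emph{ring}; formality of $C^*(G)$ requires additionally killing the higher products on the $A_\infty$-structure obtained by homological perturbation. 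The paper does this by exploiting the extra internal (Adams) grading on $\mathrm{Rhom}_{H_{-*}(\Omega G)}(\kk,\kk)$, which collapses on cohomology by Koszul duality and forces $\mu^k=0$ for $k>2$ by degree reasons; this grading argument is missing from your sketch.
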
 

We first give a proof of (i) which holds in slightly more general
setting. Recall that if $H^*(G)$ has no $p$-torsion, then over a field
$\kk$ of characteristic $p$, $H_{-*}(\Omega G; \kk)$ is isomorphic to
a polynomial algebra.

\begin{prp} \label{shm}
  Let $\kk$ be a field. Let $G$ be an H-space such that
  \[ H_{-*}(\Omega G) \cong \kk[y_1,y_2,\ldots y_n] \] 
  Then, the differential graded algebra $C_{-*}(\Omega G)$ is formal
  over $\kk$.
\end{prp}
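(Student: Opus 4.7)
The plan is to construct an explicit dg-algebra quasi-isomorphism
$\phi\colon (\kk[y_1,\ldots,y_n],\,0) \to C_{-*}(\Omega G;\kk)$.
Since the source is freely generated as a graded-commutative algebra by the $y_i$, specifying such a $\phi$ reduces to choosing cycle representatives $\tilde y_i \in C_{-*}(\Omega G;\kk)$ of the classes $y_i$ that pairwise graded-commute \emph{strictly} in the chain algebra, i.e.\ $\tilde y_i \cdot \tilde y_j = (-1)^{|y_i||y_j|}\tilde y_j \cdot \tilde y_i$ in $C_{-*}(\Omega G;\kk)$. Once such representatives exist, the multiplicative extension $y_i \mapsto \tilde y_i$ is automatically a dg-algebra map (the source has zero differential) and a quasi-isomorphism (it restricts to the identity on a cohomological generating set). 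So the whole problem reduces to producing strictly commuting chain-level representatives of the polynomial generators.

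To produce such representatives I would exploit the H-space structure on $G$ in two ways. First, passing to Moore loops makes $\Omega G$ a strictly associative topological monoid, giving $C_{-*}(\Omega G;\kk)$ its dg-algebra structure via the Pontryagin product. Second, since $G$ is an H-space we have $G \simeq \Omega BG$ for some delooping $BG$, so $\Omega G \simeq \Omega^2 BG$ is a double loop space and $C_{-*}(\Omega G;\kk)$ inherits an $E_2$-algebra structure. In particular, the Pontryagin product is homotopy commutative and each chain-level commutator $[\tilde y_i,\tilde y_j]$ is already a boundary. To promote this homotopy commutativity to strict commutativity I would apply Kadeishvili's homological perturbation lemma to transfer a minimal $A_\infty$-structure from $C_{-*}(\Omega G;\kk)$ onto $\kk[y_1,\ldots,y_n]$, so that formality becomes equivalent to the vanishing (up to gauge equivalence) of all higher operations $m_k$, $k\ge 3$, in the transferred structure.

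The main obstacle is to establish this vanishing. A parity argument handles odd $k$ when $\OP{char}\kk \neq 2$: the polynomial algebra $\kk[y_1,\ldots,y_n]$ is concentrated in even cohomological degrees (the $y_i$ must have even degree for graded commutativity to be compatible with $y_i^2\neq 0$), while $m_k$ has cohomological degree $2-k$, which is odd for $k$ odd, forcing $m_k = 0$. For even $k\ge 4$ (and in characteristic $2$) the vanishing is the delicate step, and is precisely where the H-space hypothesis on $G$ is essential: the coherence of the $E_2$-algebra structure on $C_{-*}(\Omega G;\kk)$ constrains the transferred $A_\infty$-structure, and combined with the rigidity of polynomial algebras against $A_\infty$-deformations coming from an $E_2$-operad action this forces the higher products to be gauge-equivalent to zero. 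Without the H-space structure on $G$, $\Omega G$ would only be a topological monoid and the transferred $A_\infty$-structure could a priori carry nontrivial higher products. An alternative, more hands-on route to the same conclusion is to realize each generator $y_i$ by a spherical cycle $f_i\colon S^{d_i}\to\Omega G$ (with $d_i = -|y_i|$), assemble these via the monoid structure into a map $J(\bigvee_i S^{d_i}) \to \Omega G$ from the James construction, and pull back through a strictly commutative chain model for a product of spheres (available by Examples~\ref{spheres} and~\ref{product}), though this route still requires handling the tensor-versus-polynomial rectification that the perturbation argument deals with intrinsically.
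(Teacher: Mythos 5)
There is a genuine gap at exactly the step you flag as ``delicate''. Your reduction is fine as far as it goes: if one could choose cycle representatives $\tilde y_i$ of the generators that strictly graded-commute in $C_{-*}(\Omega G;\kk)$, the multiplicative extension would indeed be a dg-algebra quasi-isomorphism; and the parity argument correctly kills the transferred $m_k$ for odd $k$ (indeed in any characteristic, since the algebra is concentrated in even degrees). But for even $k\geq 4$ there is no degree obstruction --- already for $\Omega SU(2)$, with $H_{-*}(\Omega SU(2))=\kk[y]$, $|y|=-2$, a nonzero $m_4(y,y,y,y)\in\kk\cdot y^5$ is perfectly consistent with the grading --- and the sentence you offer in its place (``coherence of the $E_2$-structure \ldots combined with the rigidity of polynomial algebras against $A_\infty$-deformations coming from an $E_2$-operad action'') is an assertion, not an argument. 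Polynomial algebras are not intrinsically formal as $A_\infty$-algebras, and no rigidity theorem of the kind you invoke is cited or proved; the entire content of the proposition is precisely how to convert homotopy commutativity into formality, which your proposal leaves unaddressed. The James-construction variant has the same problem, since it only produces a map out of a tensor algebra and you defer the tensor-versus-polynomial rectification back to the unproven step.

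The paper's proof supplies the missing mechanism, and it is worth internalising because it sidesteps both of your dead ends. One does not look for strictly commuting representatives inside $C_{-*}(\Omega G;\kk)$, nor does one compute obstructions. Instead one uses that $C_{-*}(\Omega G;\kk)$ is a \emph{strongly homotopy commutative} algebra in the sense of Sugawara and Clark: there is an $A_\infty$-morphism $\Phi^{[n]}\colon C_{-*}(\Omega G)^{\otimes n}\to C_{-*}(\Omega G)$ with $\Phi^{[n]}(1\otimes\cdots\otimes x\otimes\cdots\otimes 1)=x$. Following Munkholm, one then sends $y_i$ to $1\otimes\cdots\otimes c_i\otimes\cdots\otimes 1$ with $c_i$ a cycle representative placed in the $i$-th tensor slot; distinct generators land in distinct slots, so they commute \emph{strictly} in $C_{-*}(\Omega G)^{\otimes n}$ for free, and freeness of the polynomial algebra makes this a dg-algebra map $H_{-*}(\Omega G)\to C_{-*}(\Omega G)^{\otimes n}$. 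Composing with $\Phi^{[n]}$ yields an $A_\infty$-quasi-isomorphism $H_{-*}(\Omega G)\to C_{-*}(\Omega G)$ sending $y_i$ to $c_i$. The price of trading a strict dga map for an $A_\infty$-morphism is exactly what makes the strict-commutativity problem solvable.
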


\begin{proof}
  The crucial ingredient is that $C_{-*}(\Omega G)$ is more than just
  an $A_\infty$-algebra: it comes equipped with an $A_\infty$-morphism
  \[\Phi\colon C_{-*}(\Omega G) \otimes_{\kk} C_{-*} (\Omega G) \to C_{-*}(\Omega G) \] 
  which makes it into a strongly homotopy commutative algebra (see
	Clark \cite[Corollaries 1.7 and 3.5]{clark} and \cite[Theorem 4.3]{sugawara}). In particular, one
  has $\Phi(x \otimes 1) = \Phi(1 \otimes x) = x$ for all $x$ where
  $1$ denotes the 0-chain at the identity element of the group $\Omega
  G$.  Conceptually, the existence of this $A_\infty$-morphism is due
  to the fact that $\Omega G$ is homotopy equivalent to the
  double-loop space $\Omega^2 B G$ (cf. \cite[Chapter
    2]{adamsinfinite}). One can use this iteratively to define an
  $A_\infty$-morphism
	\[ \Phi^{[n]} : \colon C_{-*} (\Omega G)^{\otimes n} \to C_{-*}(\Omega G) \] 
	for any $n \geq 2$ (see \cite[Section 4.4]{munkholm}) defined by
	\[ \Phi^{[n]} = \Phi \circ (\Phi^{[n-1]} \otimes \id). \]
	It follows that for all $x$, we have \[ \Phi^{[n]}(1 \otimes \ldots
  \otimes x \otimes \ldots \otimes 1) = x. \]

  We next borrow an argument from {\cite[Section 7.2]{munkholm}}. Let
  $c_i \in C_{-*}(\Omega G)$ be chains representing the cohomology
  classes $y_i=[c_i]$. Consider the dg-algebra map
  \[ H_{-*} (\Omega G) \to C_{-*}(\Omega G)^{\otimes n} \] 
  given by \[ y_i \to 1 \otimes \ldots \otimes c_i \otimes \ldots
  \otimes 1 \] where $c_i$ is inserted at the $i^{th}$ entry. This is
  clearly a dg-algebra homomorphism because $H_{-*}(\Omega G)$ is a
  polynomial algebra by hypothesis. Composing this $\Phi^{[n]}$ gives
  us an $A_\infty$-morphism:
  \[ H_{-*} (\Omega G) \to C_{-*}(\Omega G)^{\otimes n} \to C_{-*}(\Omega G) \]
  which is a quasi-isomorphism since it sends $y_i$ to $c_i$.
\end{proof} 

\begin{proof}[Proof of Theorem \ref{thm:formal}]
  First, we observe that we can restrict our attention to
  simply-connected Lie groups. Indeed, if $G$ is any compact,
  connected, simple Lie group, there exists a simply-connected finite
  cover $\pi: \tilde{G} \to G$ such that $G$ is the quotient of
  $\tilde{G}$ by a subgroup of the centre of $\tilde{G}$. Hence, for
  $p$ not dividing the order of $H_1(G)$, the covering map is a
  homotopy equivalence mod $p$.
	
  Working over a field $\kk$ of characteristic $p$, let us first
  observe that since $B\Omega G \cong G$, we have
  \[ C^*(G)=C^*(B\Omega G)= \mathrm{B}(C_{-*}(\Omega G))^\# \]
  where $\mathrm{B}$ is the bar complex and $\#$ denotes the
  $\kk$-linear dual. Now, this is the standard bar resolution that
  computes $\mathrm{Ext}_{C_{-*}(\Omega G)}(\kk,\kk)$, thus we have
  the Eilenberg-Moore equivalence of dg-algebras:
  \[ \mathrm{Rhom}_{C_{-*}(\Omega G)} (\kk , \kk) \cong C^*(G) \]
  On the other hand, $H_{-*}(\Omega G)$ is a symmetric algebra hence
  is formal by Proposition \ref{shm}. Therefore, we have:
  \[ \mathrm{Rhom}_{H_{-*}(\Omega G)} (\kk , \kk) \cong C^*(G) \]
  The left hand side has a bigrading given by the cohomological
  grading and the internal grading with respect to which $d$ has
  degree $(1,0)$. Applying the homological perturbation lemma, we get
  a quasi-isomorphic $A_\infty$-algebra on the homology such that the
  products $\mu^k: H^*(G)^{\otimes k} \to H^*(G)$ for $k \geq 2$ has
  bidegree $(2-k,0)$.

  On the other hand, by Koszul duality between the symmetric algebra
  $H_{-*}(\Omega G)$ and the exterior algebra $H^*(G)$
  (cf. \cite{bgs}), we know that the bigrading collapses to a single
  grading at the level of homology $H^*(G) \cong
  \mathrm{Ext}_{H_{-*}(\Omega G)} (\kk, \kk)$.  Hence, for grading
  reasons all the higher products $\mu^k$ for $k>2$ have to vanish. It
  follows that $C^*(G)$ is formal as required.
\end{proof}

\begin{rmk}
  It is an interesting question whether the formality result from
  Theorem \ref{thm:formal} holds in characteristic $p$ for which
  $H^*(G)$ has $p$-torsion. We note that $SO(3) \cong \rp{3}$ can be
  shown to be formal mod $2$ similar to the argument given in Example
  \ref{spheres}. Namely, recall that $H^*(\rp{3};\FF_2)=
  \FF_2[x]/(x^4)$ where $|x|=1$. Using a simplicial decomposition of
  $\rp{3}$, we can consider the normalized simplicial cochain complex
  $C^*(\rp{3}; \FF_2)$ which is non-zero only over $*=0,1,2,3$ and can
  be equipped with a product structure by dualizing the
  Alexander-Whitney diagonal approximation.  Therefore, we can
  construct a dg-algebra map $\FF_2[x]/(x^4)=H^*(\rp{3}; \FF_2) \to
  C^*(\rp{3}; \FF_2)$ by sending $x$ to a cochain level generator and
  extending it to a dg-algebra map. By construction, this is a
  quasi-isomorphism, thus we deduce that $C^*(\rp{3}; \FF_2)$ is
  formal.
\end{rmk}

\section{Twisted complexes}\label{sct:twcpx}

\subsection{Cones and nilpotence}

We will use the conventions of {\cite[Section (3p)]{Seidel}}. In
particular, the shift functor, which we denote by $[1]$, will conceal
a multitude of signs. Note that, with these conventions, if
$x\in\hom^n(X,Y)$ then $x[k]\in\hom^n(X[k],Y[k])$. Recall that, given
a closed morphism $\alpha\in\hom^0(X,Y)$, the cone on $\alpha$ is
represented by the twisted complex
\[X[1]\xdashrightarrow{-\alpha[1]}Y.\]
To avoid confusion, we will use solid arrows to denote morphisms of
degree zero and dashed arrows to denote morphisms of degree one.

Recall from {\cite[Section 3l]{Seidel}} that a twisted complex
$(\bigoplus_iL_i[s_i],\delta=\{\delta_{ij}\})$ is a direct sum of
shifts of objects $L_i[s_i]$ together with a collection of morphisms
$\delta_{ij}\colon L_i[s_i]\to L_j[s_j]$ of degree one satisfying some
conditions (a lower-triangularity condition and a closedness
condition {\cite[Equation 3.19]{Seidel}}). When we draw a collection
of objects and (dashed, degree one) arrows and call it a twisted
complex, this is shorthand for the twisted complex obtained by summing
the objects and taking the indicated morphisms as $\delta$. For
example, the twisted complex
\[
X[1]\xdashrightarrow{-\alpha[1]}Y\xdashleftarrow{-1[1]}Y[1]\xdashrightarrow{-\beta[1]}Z
\]
means we take $(\bigoplus_iL_i[s_i],\delta)$ with
\[L_1[s_1]=X[1],\ L_2[s_2]=Y[1],\ L_3[s_3]=Y,\ L_4[s_4]=Z\]
and
\[\delta_{13}=-\alpha[1],\ \delta_{23}=-1[1],\ \delta_{24}=-\beta[1]\]
(all other $\delta_{ij}=0$). Despite the fact that some of the arrows
in the diagram go backwards, this is still lower-triangular
($\delta_{ij}=0$ for $j<i$) for the labelling of objects we have
picked.

A {\em morphism of twisted complexes} from $(\bigoplus_i
L_i[s_i],\delta)$ to $(\bigoplus_jM_j[t_j],\epsilon)$ comprises a
collection of morphisms $L_i[s_i]\to M_j[t_j]$. There is a
differential $\mu^1_{\Tw}$ and a composition $\mu^2_{\Tw}$ (indeed, an
$A_\infty$-structure $\mu^d_{\Tw}$) on the space of morphisms defined
by {\cite[Equation 3.20]{Seidel}}. For clarity, we will always write
our twisted complexes (and their differentials) horizontally and
morphisms between twisted complexes vertically downwards. With this
convention, the operations $\mu^d_{\Tw}$ are defined by stacking
morphisms vertically and summing over all possible
$A_\infty$-compositions of morphisms (including the internal
differentials of the twisted complexes) with suitable signs.

\begin{lma}\label{lma:comp}
Let $X,Y,Z$ be objects of a strictly unital $A_\infty$-category and
\[\alpha\in\hom^0(X,Y),\quad\beta\in\hom^0(Y,Z)\]
be two $\mu_{\Tw}^1$-closed morphisms. Then the twisted complex
\begin{equation}\label{eq:conecomp}
X[1]\xdashrightarrow{-\alpha[1]}Y\xdashleftarrow{-1[1]}Y[1]\xdashrightarrow{-\beta[1]}Z
\end{equation}
is quasi-isomorphic to the twisted complex
\begin{equation}\label{eq:compcone}
\Cone{\mu^2(\beta,\alpha)}=\left(X[1]\xdashrightarrow{-\mu^2(\beta,\alpha)[1]}Z\right).
\end{equation}
Since the twisted complex in Equation \eqref{eq:conecomp} is the cone
on a $\mu^1_{\Tw}$-closed morphism
\[\Cone{\beta}\to\Cone{\alpha},\]
this implies that $\Cone{\beta}$ and $\Cone{\alpha}$ generate
$\Cone{\mu^2(\beta,\alpha)}$.
\end{lma}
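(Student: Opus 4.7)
The guiding idea is that the middle piece $Y \xdashleftarrow{-1[1]} Y[1]$ of \eqref{eq:conecomp} is (up to shift) the cone on the identity morphism $\id_Y$, so it is contractible. After eliminating this acyclic middle via homological perturbation, one obtains a two-term twisted complex on $X[1]$ and $Z$ whose induced differential is the composition along the zigzag $X[1] \xrightarrow{-\alpha[1]} Y \xleftarrow{-1[1]} Y[1] \xrightarrow{-\beta[1]} Z$. Inverting $-1[1]$ and applying the graded Leibniz rule for the shift, this composition is $-\mu^2(\beta,\alpha)[1]$ up to sign conventions, matching the differential of \eqref{eq:compcone}.

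To implement this rigorously, I would construct an explicit $\mu^1_{\Tw}$-closed twisted-complex morphism $f \colon \eqref{eq:compcone} \to \eqref{eq:conecomp}$. A natural ansatz sets $f$ to be the identity on the $X[1] \to X[1]$ and $Z \to Z$ components, together with a single off-diagonal component $X[1] \to Y[1]$ of the form $\pm \alpha[1]$. The closure condition $\mu^1_{\Tw}(f) = 0$ unpacks into a small finite list of identities between components. The key ones are: the $Y$-valued component of $\mu^1_{\Tw}(f)$ reduces, via strict unitality applied to $-1[1]$, to a cancellation between $-\alpha[1]$ and $\mu^2(-1[1], \pm \alpha[1])$; and the $Z$-valued component reduces to the identity $\mu^2(-\beta[1], \pm \alpha[1]) = -\mu^2(\beta,\alpha)[1]$, which holds by the graded Leibniz rule for the shift. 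All other components vanish because $\alpha$ and $\beta$ are $\mu^1$-closed. A symmetric ansatz with an off-diagonal $Y \to Z$ component produces a candidate quasi-inverse $g$, and the compositions $fg$ and $gf$ then differ from the identity by explicit $\mu^1_{\Tw}$-boundaries built from the canonical contracting homotopy on the cone of $\id_Y$.

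The principal obstacle is not conceptual but arithmetic: Seidel's shift convention $[1]$ conceals signs that must be tracked carefully when $\mu^k$ is applied to shifted arguments, so the bulk of the work is patiently matching signs in the expansion of $\mu^1_{\Tw}(f)$ and $\mu^1_{\Tw}(g)$ rather than producing the morphisms themselves. Once the quasi-isomorphism $\eqref{eq:conecomp} \simeq \eqref{eq:compcone}$ is in hand, the generation statement is immediate: the splitting of the objects of \eqref{eq:conecomp} into $\{X[1], Y\}$ (realising $\Cone{\alpha}$) and $\{Y[1], Z\}$ (realising $\Cone{\beta}$) exhibits \eqref{eq:conecomp} as the cone of a $\mu^1_{\Tw}$-closed morphism $\Cone{\beta} \to \Cone{\alpha}$ whose sole non-zero component is $-1[1] \colon Y[1] \to Y$, hence $\Cone{\mu^2(\beta,\alpha)}$ lies in the triangulated hull of $\{\Cone{\alpha}, \Cone{\beta}\}$.
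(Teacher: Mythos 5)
Your proposal is correct and takes essentially the same route as the paper: the paper's proof consists precisely of the two explicit twisted-complex morphisms you describe — one with identity components on $X[1]$ and $Z$ plus a single off-diagonal component $-\beta\colon Y\to Z$, and its quasi-inverse with the off-diagonal component $-\alpha[1]\colon X[1]\to Y[1]$ — with closedness following from strict unitality and the closedness of $\alpha$ and $\beta$ exactly as you outline. The homological-perturbation framing is extra motivation, but the implemented argument is the same.
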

\begin{proof}
A quasi-isomorphism is given by
\begin{center}
\begin{tikzpicture}[->,>=stealth',shorten >=1pt,auto,node distance=1.8cm,
  thick]

  \node (1) {$X[1]$};
  \node (2) [right= of 1] {$Y$};
  \node (3) [right= of 2] {$Y[1]$};
  \node (4) [right= of 3] {$Z$};
  \node (5) [below= of 1] {$X[1]$};
  \node (6) [below= of 4] {$Z$};

  \path[dashed]
    (1) edge node [above] {$-\alpha[1]$} (2)
    (3) edge node [above] {$-1[1]$} (2)
    (3) edge node [above] {$-\beta[1]$} (4)
    (5) edge node [below] {$-\mu^2(\beta,\alpha)[1]$} (6);

  \path
    (2) edge node [below left] {$-\beta$} (6)
    (4) edge node [right] {$1$} (6)
    (1) edge node [left] {$1[1]$} (5);
\end{tikzpicture}\end{center}

with quasi-inverse

\begin{center}
\begin{tikzpicture}[->,>=stealth',shorten >=1pt,auto,node distance=1.8cm,
  thick]

  \node (1) {$X[1]$};
  \node (2) [right= of 1] {$Y$};
  \node (3) [right= of 2] {$Y[1]$};
  \node (4) [right= of 3] {$Z$};
  \node (5) [above= of 1] {$X[1]$};
  \node (6) [above= of 4] {$Z$};

  \path[dashed]
    (1) edge node [below] {$-\alpha[1]$} (2)
    (3) edge node [below] {$-1[1]$} (2)
    (3) edge node [below] {$-\beta[1]$} (4)
    (5) edge node [above] {$-\mu^2(\beta,\alpha)[1]$} (6);

  \path
    (6) edge node [right] {$1$} (4)
    (5) edge node [left] {$1[1]$} (1)
    (5) edge node [above right] {$-\alpha[1]$} (3);
\end{tikzpicture}\end{center}
\end{proof}

\begin{dfn}\label{dfn:nilp}
Given a morphism $a\in\hom^0(X[s],X)$, we set $a^1:=a$ and define
$a^k\in\hom^0(X[ks],X)$, $k\geq 1$, inductively by
$a^{k+1}:=\mu^2(a,a^k[s])$. For simplicity of notation we will define
\[a^{m_1\odot m_2\odot\cdots\odot m_n}:=(\ldots((a^{m_1})^{m_2})\ldots)^{m_n}\]
As we are working with $A_\infty$-algebras, which are not associative
unless $\mu^1=0$, the precise bracketing we have chosen matters. We
will say that $a$ is {\em nilpotent} if there exists a sequence
$m_1,\ldots,m_n$ such that $a^{m_1\odot m_2\odot\cdots\odot m_n}=0$.
\end{dfn}

\begin{cor}\label{cor:splgen}
For any morphism $a\in\hom^0(X[s],X)$ and any sequence of integers
$m_1,\ldots,m_n$, $\Cone{a}$ generates $\Cone{a^{m_1\odot m_2
    \odot\cdots\odot m_n}}$. In particular, if $a$ is nilpotent then
$\Cone{a}$ split-generates $X$.
\end{cor}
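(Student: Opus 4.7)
The plan is a double induction, both times feeding into Lemma \ref{lma:comp}, followed by a short observation for the split-generation conclusion.

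First I would prove the auxiliary statement that $\Cone{a}$ generates $\Cone{a^k}$ for every $k \geq 1$, by induction on $k$. The base case $k=1$ is tautological. For the inductive step, I take $\alpha := a^k[s] \in \hom^0(X[(k+1)s], X[s])$ and $\beta := a \in \hom^0(X[s], X)$; these are $\mu^1$-closed and, by Definition \ref{dfn:nilp}, $\mu^2(\beta,\alpha) = \mu^2(a, a^k[s]) = a^{k+1}$. Lemma \ref{lma:comp} then tells me that $\Cone{\alpha}$ and $\Cone{\beta} = \Cone{a}$ together generate $\Cone{a^{k+1}}$. Since $\Cone{a^k[s]}$ is the shift $\Cone{a^k}[s]$, and since shifts and generation commute, $\Cone{\alpha}$ is already generated by $\Cone{a^k}$, which is generated by $\Cone{a}$ by inductive hypothesis. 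Hence $\Cone{a}$ generates $\Cone{a^{k+1}}$.

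Next I would induct on $n$ to prove the full corollary. The case $n=1$ is the previous step. For the inductive step, set $b := a^{m_1 \odot \cdots \odot m_{n-1}} \in \hom^0(X[Ms], X)$ with $M = m_1 \cdots m_{n-1}$. The inductive hypothesis says $\Cone{a}$ generates $\Cone{b}$. Applying the auxiliary statement to the morphism $b$ (with shift $Ms$ in place of $s$) shows that $\Cone{b}$ generates $\Cone{b^{m_n}}$, and by definition $b^{m_n} = a^{m_1 \odot \cdots \odot m_n}$. Composing the two generation statements gives the claim.

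For the "in particular" clause, suppose $a^{m_1 \odot \cdots \odot m_n} = 0$. Then $\Cone{a^{m_1 \odot \cdots \odot m_n}}$ is the cone on the zero morphism $X[Ks] \to X$ with $K = m_1 \cdots m_n$, which is quasi-isomorphic to the direct sum $X[Ks+1] \oplus X$. Since $\Cone{a}$ generates this cone, and $X$ is a direct summand of it, $\Cone{a}$ split-generates $X$.

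The only real obstacle is bookkeeping the shifts and checking that the iterated composition in Definition \ref{dfn:nilp} matches the form required by Lemma \ref{lma:comp}; once this is arranged as above, everything falls out formally from that lemma.
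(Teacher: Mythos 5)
Your proof is correct and follows essentially the same route as the paper's: an induction on $k$ showing $\Cone{a}$ generates $\Cone{a^{k}}$ via Lemma \ref{lma:comp} applied to $\beta=a$ and $\alpha=a^{k}[s]$, iterated over the sequence $m_1,\ldots,m_n$, with the split-generation clause following because the cone on the zero morphism is a direct sum containing $X$ as a summand. The only difference is that you make explicit the shift bookkeeping ($\Cone{a^k[s]}=\Cone{a^k}[s]$ and compatibility of generation with shifts) that the paper suppresses.
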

\begin{proof}
Lemma \ref{lma:comp} tells us that $\Cone{a}$ and $\Cone{a^k}$
generate $\Cone{a^{k+1}}$. Inductively, we see that $\Cone{a}$
generates $\Cone{a^{m_1}}$ for any $m_1\geq 1$. Applying this argument
again with $a$ replaced by $a^{m_1\odot m_2\odot\cdots\odot m_{n-1}}$,
we see that $\Cone{a}$ and $\Cone{a^{m_1\odot m_2\odot\cdots\odot
    m_{n-1}}}$ generate $\Cone{a^{m_1\odot m_2\odot\cdots\odot
    m_n}}$. By induction, $\Cone{a}$ generates $\Cone{a^{m_1\odot
    m_2\odot\cdots\odot m_n}}$. If $a^{m_1\odot m_2\odot\cdots\odot
  m_n}=0$ then
\[\Cone{a^{m_1\odot m_2\odot\cdots\odot m_n}}=X\oplus X[sm].\]
Therefore, if $a$ is nilpotent we see that $\Cone{a}$ split-generates
$X$.
\end{proof}

The following lemma will be useful for proving nilpotence of certain
morphisms.

\begin{lma}\label{lma:koszulinduct}
Suppose that $P$ is an object in an $A_\infty$-category and that
$x\in\hom^0(P[s],P)$ is a closed morphism. Suppose moreover that
$a\in\hom^0(P[t],P)$ and $c\in\hom^0(P[s+t+1],P)$ are such that
\begin{center}
\begin{tikzpicture}[->,>=stealth',shorten >=1pt,auto,node distance=2cm and 2.5cm,
  thick]

  \node at (0,0) (1) {$P[s+t+1]$};
  \node at (0,-3) (2) {$P[s+1]$};
  \node at (4,0) (3) {$P[t]$};
  \node at (4,-3) (4) {$P$};

  \path
    (1) edge node [left] {$a[s+1]$} (2)
    (3) edge node [right] {$a$} (4)
    (1) edge node [below left] {$c$} (4);

  \path[dashed]
    (1) edge node [above] {$-x[t+1]$} (3)
    (2) edge node [below] {$-x[1]$} (4);
\end{tikzpicture}\end{center}
defines a $\mu^1_{\Tw}$-closed morphism
$b\colon\Cone{\TO{P[s]}{x}{P}}[t]\to\Cone{\TO{P[s]}{x}{P}}$. Then
$b^k$ has the form
\begin{center}
\begin{tikzpicture}[->,>=stealth',shorten >=1pt,auto,node distance=2cm and 2.5cm,
  thick]

  \node at (0,0) (1) {$P[s+kt+1]$}; \node at (0,-3) (2) {$P[s+1]$};
  \node at (4,0) (3) {$P[kt]$}; \node at (4,-3) (4) {$P$};

  \path (1) edge node [left] {$a^k[s+1]$} (2) (3) edge node [right]
        {$a^k$} (4) (1) edge node [below left] {$c'$} (4);

  \path[dashed] (1) edge node [above] {$-x[kt+1]$} (3) (2) edge node
       [below] {$-x[1]$} (4);
\end{tikzpicture}\end{center}
for some $c'\in\hom^0(P[s+kt+1],P)$. Moreover, if $a^m=0$ then
$(b^m)^2=0$.
\end{lma}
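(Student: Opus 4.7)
The plan is to prove the claimed matrix form of $b^k$ by induction on $k$, and then to deduce $(b^m)^2 = 0$ by a direct inspection.

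To set the stage, I will view each cone $\Cone{x}[jt]$ as a twisted complex with a ``top'' piece $P[jt]$ and a ``bottom'' piece $P[s+jt+1]$, joined by the single internal differential $\delta_j := -x[jt+1]$ pointing strictly bottom-to-top. A morphism $\Cone{x}[jt] \to \Cone{x}[j't]$ is then specified by a $2{\times}2$ matrix of morphisms in the ambient $A_\infty$-category, whose entries I label $TT, TB, BT, BB$ by source and target piece. By hypothesis, $b$ has entries $(b_{TT}, b_{TB}, b_{BT}, b_{BB}) = (a, 0, c, a[s+1])$, and the inductive claim is that for each $k$, $b^k$ has entries $(a^k, 0, c'_k, a^k[s+1])$ for some $c'_k \in \hom^0(P[s+kt+1], P)$.

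For the inductive step I will compute $b^{k+1} = \mu^2_{\Tw}(b, b^k[t])$, which unpacks as a sum of $\mu^{2+\ell}$-operations in the underlying $A_\infty$-category with $\ell$ insertions of the internal differentials of the three cones involved. The crux of the argument --- and the step I expect to need the most care --- is the observation that none of the nonzero morphisms contributing to this sum go from a top piece to a bottom piece: the nonzero entries of $b$ and $b^k[t]$ are of type $TT$, $BB$, or $BT$, and every $\delta_j$ is itself of type $BT$. Hence in any composable chain of morphisms appearing under a $\mu^n$, once the chain reaches a top piece it can never descend to a bottom piece. It follows at once that the $TT$ entry of $b^{k+1}$ is forced to be the direct $\mu^2$-composition of top entries, giving $\mu^2(a, a^k[t]) = a^{k+1}$; similarly the $BB$ entry equals $\mu^2(a[s+1], a^k[s+t+1]) = a^{k+1}[s+1]$ by functoriality of the shift; the $TB$ entry vanishes; and the $BT$ entry is some new element which I simply name $c'_{k+1}$. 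This closes the induction.

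For the final claim, suppose $a^m = 0$; then $b^m = (0, 0, c'_m, 0)$ consists only of its $BT$ entry $c'_m: P[s+mt+1] \to P$. Applied to $(b^m)^2 = \mu^2_{\Tw}(b^m, b^m[mt])$, the same no-$T{\to}B$ principle immediately kills the $TT$, $BB$, and $TB$ entries (each would need either a $TT$ or $BB$ input, or a $T{\to}B$ transition, all of which are unavailable). The only entry left to check is $BT$: any contribution to it would have to chain $c'_m[mt]: P[s+2mt+1] \to P[mt]$ with $c'_m: P[s+mt+1] \to P$, possibly after inserting internal differentials. However $c'_m[mt]$ lands at the top piece $P[mt]$ of the middle cone $\Cone{x}[mt]$, while $c'_m$ must start at the bottom piece $P[s+mt+1]$; bridging the two requires a $T{\to}B$ arrow, and none exists (even the $\delta_j$ are of type $BT$). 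Therefore $(b^m)^2 = 0$.
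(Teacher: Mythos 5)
Your proof is correct and follows essentially the same route as the paper's: an induction on $k$ in which the key point is that every available arrow (the entries of $b$ and $b^k$, and the internal differentials of the cones) points bottom-to-top or stays within a level, so no composite can ever descend from a top piece to a bottom piece; your explicit $TT/TB/BT/BB$ bookkeeping is just a more detailed writing-out of the paper's observation that the only would-be contribution to the forbidden component would require a downward-pointing piece of the differential of $\mathrm{Cone}(x[t])$, which does not exist.
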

\begin{proof}
  We will verify the claim about the form of $b^k$ by induction. It is
  clearly true for $k=1$. We need to check that $b^{k+1}$ has no
  component connecting $P[(k+1)t]$ to $P[s+1]$. By definition of
  $\mu^2_\Tw$ (the composition for morphisms of twisted complexes
  {\cite[Equation 3.20]{Seidel}}) this component is a sum of terms of
  the form $\mu^m(a,\delta_{m-1},\ldots,\delta_2,a^k)$ where
  $\delta_i$ stands for some $\delta\in\hom^0(P[t],P[s+t+1])$ occuring
  as part of the differential in the twisted complex
  $\Cone{x[t]}$. However, the differential in $\Cone{x[t]}$ has no
  component connecting $P[t]$ and $P[s+t+1]$ (this would point left in
  our picture, where the only differential is the right-pointing arrow
  $-x[t+1]$).

  We now show that $a^m=0$ implies $(b^m)^2=0$. If $a^m=0$ then the
  first part of the lemma tells us that $b^m$ has the form

\begin{center}
\begin{tikzpicture}[->,>=stealth',shorten >=1pt,auto,node distance=2cm and 2.5cm,
  thick]

  \node at (0,0) (1) {$P[s+mt+1]$}; \node at (0,-3) (2) {$P[s+1]$};
  \node at (4,0) (3) {$P[mt]$}; \node at (4,-3) (4) {$P$};

  \path (1) edge node [left] {$0$} (2)
  (3) edge node [right] {$0$} (4)
  (1) edge node [below left] {$c'$} (4);

  \path[dashed] (1) edge node [above] {$-x[mt+1]$} (3)
  (2) edge node [below] {$-x[1]$} (4);
\end{tikzpicture}\end{center}

We find $(b^m)^2$ (that is $\mu^2_{\Tw}(b^m,b^m)$ by stacking $b^m$ on
top of itself and summing over all possible compositions:

\begin{center}
\begin{tikzpicture}[->,>=stealth',shorten >=1pt,auto,node distance=2cm and 2.5cm,
  thick]

  \node at (0,0) (5) {$P[s+2mt+1]$}; \node at (4,0) (6) {$P[2mt]$};
  \node at (0,-3) (1) {$P[s+mt+1]$}; \node at (0,-6) (2) {$P[s+1]$};
  \node at (4,-3) (3) {$P[mt]$}; \node at (4,-6) (4) {$P$};

  \path (1) edge node [left] {$0$} (2) (3) edge node [right]
        {$0$} (4) (1) edge node [below left] {$c'$} (4);

  \path[dashed] (1) edge node [above] {$-x[mt+1]$} (3) (2) edge node
       [below] {$-x[1]$} (4);

  \path (5) edge node [left] {$0$} (1) (6) edge node [right]
        {$0$} (3) (5) edge node [below left] {$c'$} (3);

  \path[dashed] (5) edge node [above] {$-x[2mt+1]$} (6);
\end{tikzpicture}\end{center}

In this picture there are no non-zero compositions because no non-zero
arrow ends where another begins, therefore $(b^m)^2=0$.
\end{proof}

\subsection{Koszul twisted complexes}

We are interested in a certain class of iterated cones, a class which
we will see is preserved by triangulated $A_\infty$-functors.

\begin{dfn}\label{dfn:koszul}
Let $\aA$ be an $A_\infty$-category and let $K$ be a twisted complex
in $\Tw(\aA)$. We say $K$ is a {\em Koszul twisted complex built out
  of $K'$} if the following conditions hold.
\begin{enumerate}
\item[I.] $K$ can be expressed as an iterated cone of the form:
\begin{align*}
K=K_n&=\Cone{\TO{K_{n-1}[s_{n-1}]}{x_{n-1}}{K_{n-1}}}\\
  K_{n-1}&=\Cone{\TO{K_{n-2}[s_{n-2}]}{x_{n-2}}{K_{n-2}}}\\
  \vdots&\quad\vdots\quad\vdots\\
  K_1&=\Cone{\TO{K_0[s_0]}{x_0}{K_0}}\\
  K_0&=K'
\end{align*}
for some sequence of objects $K_0,\ldots,K_{n-1}$, shifts
$s_0,\ldots,s_{n-1}$ and degree zero, $\mu^1_{\Tw}$-closed morphisms
$x_0,\ldots,x_{n-1}$.
\item[II.] Moreover, for $0 \leq i \leq n-1$, each $x_i$ must be expressible via the following recursive construction. For each $i$, there exist morphisms
  $a_{i,j}\in\hom^0\left(K_j[s_i],K_j\right)$ and
  $c_{i,j}\in\hom^0\left(K_j[s_i+s_j+1],K_j\right)$ for $0 \leq j <i$, such that
  $\mu^1_{\Tw}(a_{i,j})=0$ and the following diagram
\begin{center}
\begin{tikzpicture}[->,>=stealth',shorten >=1pt,auto,thick,node distance=2cm and 3cm]

  \node at (0,0) (1) {$K_j[s_i+s_j+1]$};
  \node at (0,-3) (2) {$K_j[s_j+1]$};
  \node at (4,0) (3) {$K_j[s_i]$};
  \node at (4,-3) (4) {$K_j$};

  \path
    (1) edge node [left] {$a_{i,j}[s_j+1]$} (2)
    (3) edge node [right] {$a_{i,j}$} (4)
    (1) edge node [below left] {$c_{i,j}$} (4);

  \path[dashed]
    (1) edge node [above] {$-x_j[s_i+1]$} (3)
    (2) edge node [below] {$-x_j[1]$} (4);
\end{tikzpicture}\end{center}
	viewed as a map $\Cone{x_j}[s_i] \to \Cone{x_j}$ gives $a_{i,j+1}$ for $0\leq j < i-1$, and finally it gives $x_i$ for $j=i-1$.
\end{enumerate}
If $K$ is a Koszul twisted complex as above, we call the morphisms
$a_{i,0}$, $i=0,\ldots,n-1$, its {\em edges}.
\end{dfn}

\begin{lma}\label{lma:functorkoszul}
Suppose that $K$ is a Koszul twisted complex in $\Tw(\aA)$ and that $\fF\colon\Tw(\aA)\to\Tw(\bB)$
is the triangulated $A_\infty$-functor $\Tw(\gG)$ induced by an $A_\infty$-functor $\gG\colon\aA\to\bB$. Then $\fF(K)$ is a Koszul twisted complex in $\Tw(\bB)$.
\end{lma}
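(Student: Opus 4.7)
The plan is to induct on $n$, the length of the iterated cone decomposition of $K$. The case $n=0$ is vacuous, as $K_0 = K'$ and no edges need to be specified. For the inductive step, set $\fF = \Tw(\gG)$ and assume $\fF(K_{n-1})$ is already a Koszul twisted complex built out of $\fF(K')$. Since the construction $\Tw$ is strictly triangulated, $\fF$ sends the cone on $x_{n-1}$ to the cone on $\fF^1(x_{n-1})\colon \fF(K_{n-1})[s_{n-1}] \to \fF(K_{n-1})$, which is closed by the $A_\infty$-functor relations. This transports the iterated-cone structure of Condition I to $\fF(K)$.

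For Condition II, I propose the edges $a'_{i,j} := \fF^1(a_{i,j})$ and connecting maps $c'_{i,j}$ to be extracted from the analysis below. Each $a'_{i,j}$ is $\mu^1$-closed: the $A_\infty$-functor relation gives $\mu^1(\fF^1(a_{i,j})) = \fF^1(\mu^1(a_{i,j})) = 0$. Also $a'_{i,i} = \fF^1(x_i) = x'_i$, matching the diagonal condition. The real content is to verify that $a'_{i,j+1}$, viewed as a morphism $\Cone{x'_j}[s_i] \to \Cone{x'_j}$ in $\Tw(\bB)$, has bottom face $a'_{i,j}[s_j+1]$, top face $a'_{i,j}$, some diagonal $c'_{i,j}$, and zero upward arrow.

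I would verify this by expanding
\[
\fF^1(a_{i,j+1}) = \sum_{m,n \ge 0} \gG^{m+n+1}\bigl(\delta_{\mathrm{tgt}}^{\,m},\, a_{i,j+1},\, \delta_{\mathrm{src}}^{\,n}\bigr),
\]
where $\delta_{\mathrm{src}}$ and $\delta_{\mathrm{tgt}}$ are the differentials of $\Cone{x_j}[s_i]$ and $\Cone{x_j}$ respectively, and then tracking which summand-to-summand component each term lands in. Each of $\delta_{\mathrm{src}}, \delta_{\mathrm{tgt}}$ decomposes into internal pieces on the two summands (which stay within those summands) plus a single \emph{downward} off-diagonal piece ($-x_j[s_i+1]$ or $-x_j[1]$). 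Together with the fact that $a_{i,j+1}$ itself has no upward component, a directional analysis of sources and targets in each $\gG^k$-term gives three observations:
(i) no composable sequence of the allowed pieces goes from $K_j[s_i]$ to $K_j[s_j+1]$, so the upward component of $\fF^1(a_{i,j+1})$ vanishes;
(ii) the subsum of terms involving only the internal pieces of $\delta_{\mathrm{src}}, \delta_{\mathrm{tgt}}$ and the top (resp.\ bottom) component of $a_{i,j+1}$ is literally the defining expansion of $\fF^1(a_{i,j})$ (resp.\ $\fF^1(a_{i,j})[s_j+1]$);
(iii) all remaining terms, involving at least one off-diagonal $x_j$-piece or the $c_{i,j}$-component of $a_{i,j+1}$, necessarily start at $K_j[s_i+s_j+1]$ and end at $K_j$, hence contribute to a single new diagonal map which we designate as $c'_{i,j}$.

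The main obstacle is the combinatorial bookkeeping in (ii), namely the recognition that after stripping off the off-diagonal contributions, what remains on each diagonal face is precisely the formula $\sum \gG^{m+n+1}(\delta_{K_j}^{\,m}, a_{i,j}, \delta_{K_j[s_i]}^{\,n})$ defining $\fF^1(a_{i,j})$. Once this identification is in place, (i) is essentially immediate from the one-sidedness of the cone differentials, and (iii) is a definition. Combining this with the Koszul structure on $\fF(K_{n-1})$ inherited by induction completes the verification of Conditions I and II for $\fF(K)$.
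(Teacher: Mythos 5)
Your proposal is correct and follows essentially the same route as the paper: both reduce Condition I to the fact that $\Tw(\gG)$ sends cones to cones, take $\fF^1(a_{i,j})$ as the new edges, and verify Condition II by expanding the action of $\Tw(\gG)$ on the morphism $a_{i,j+1}$ and sorting the resulting terms by source and target summand (your items (i)--(iii) are exactly the bookkeeping behind the paper's displayed diagram and its formula for the diagonal entry $C$). The only cosmetic difference is your induction wrapper and your slightly more explicit treatment of the internal differentials of $K_j$, which the paper absorbs into the phrase ``the way we define $\fF$ for morphisms of twisted complexes.''
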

\begin{proof}
By {\cite[Lem. 3.30]{Seidel}}, we have $\Cone{\fF^1(m)}=\fF(\Cone{m})$
for any morphism $m$, so we only need to check that Condition II of
the definition of Koszul twisted complexes remains true for
$\fF^1(a_{i,j+1})$. The way we define $\fF$ for morphisms of twisted
complexes means that applying $\fF$ to the diagram in Condition II
yields

\begin{center}
\begin{tikzpicture}[->,>=stealth',shorten >=1pt,auto,node distance=2cm and 3cm,
  thick]

  \node at (0,0) (1) {$\fF(K_j)[s_i+s_j+1]$};
  \node at (0,-3) (2) {$\fF(K_j)[s_j+1]$};
  \node at (6,0) (3) {$\fF(K_j)[s_i]$};
  \node at (6,-3) (4) {$\fF(K_j)$};

  \path
    (1) edge node [left] {$\fF^1(a_{i,j})[s_j+1]$} (2)
    (3) edge node [right] {$\fF^1(a_{i,j})$} (4)
    (1) edge node [below left] {$C$} (4);

  \path[dashed]
    (1) edge node [above] {$-\fF^1(x_j)[s_i+1]$} (3)
    (2) edge node [below] {$-\fF^1(x_j)[1]$} (4);
\end{tikzpicture}\end{center}

where
\[C=\fF^1(c_{i,j})\pm\fF^2(a_{i,j},x_j[s_i+1])\pm\fF^2(x_j[1],a_{i,j}[s_j+1]).\]
This diagram now represents $\fF^1(a_{i,j+1})$ and this has the
required form.
\end{proof}

\subsection{Extreme cases}

There are two extreme cases which are useful to consider: the
case when the edges of a Koszul twisted complex are nilpotent, and the
case when one of the morphisms $a_{i,j}$ is a quasi-isomorphism. In
the first case, we will see that $K$ split-generates $K'$. In the
second case, we will see that $K$ is quasi-isomorphic to the zero
object.

\begin{lma}\label{lma:nilpinduct}
Suppose $K$ is a Koszul twisted complex built out of $K'$, for which
$(a_{i,j})^{m\odot 2^{\odot q}}=0$ for some $i,j$. Then
$(a_{i,j+1})^{m\odot 2^{\odot(q+1)}}=0$. In particular, if
$(a_{i,0})^m=0$ then $(a_{i,i})^{m\odot 2^{\odot i}}=0$.
\end{lma}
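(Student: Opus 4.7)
The lemma is a direct consequence of Lemma \ref{lma:koszulinduct}, up to some notational bookkeeping. By Condition II of Definition \ref{dfn:koszul}, the morphism $a_{i,j+1}$ has precisely the shape appearing in the hypothesis of Lemma \ref{lma:koszulinduct}, with the identifications $P = K_j$, $x = x_j$, $a = a_{i,j}$, $c = c_{i,j}$, $b = a_{i,j+1}$, $s = s_j$, $t = s_i$; closedness of $a_{i,j}$ and $x_j$ is guaranteed by Conditions I and II.

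Set $M := m \odot 2^{\odot q}$, so the hypothesis reads $(a_{i,j})^M = 0$. The first step is to note that Lemma \ref{lma:koszulinduct}'s first part extends routinely from integer exponents $k$ to iterated $\odot$-exponents $M$. Concretely, one inducts on the length of $M$, with the inductive step observing that $\mu^2_{\Tw}$-composing two morphisms $\Cone(x_j)[\cdot] \to \Cone(x_j)$ of the ``diagonal form'' displayed in Lemma \ref{lma:koszulinduct} yields another such morphism, with horizontal entries obtained by $\mu^2$-composition of the horizontals of the factors; the absence of the unwanted lower-right-to-upper-left component is justified by exactly the argument in the original proof, since the differential of $\Cone(x_j)[\cdot]$ has no component pointing in that direction. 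Consequently $(a_{i,j+1})^M$ has diagonal form with horizontals $(a_{i,j})^M = 0$, and the argument in the ``as a consequence'' part of Lemma \ref{lma:koszulinduct} applies verbatim to give $((a_{i,j+1})^M)^2 = 0$. Unwinding Definition \ref{dfn:nilp}, this is exactly $(a_{i,j+1})^{M \odot 2} = (a_{i,j+1})^{m \odot 2^{\odot (q+1)}} = 0$.

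For the ``in particular'' clause, one iterates the first part $i$ times, starting from the hypothesis $(a_{i,0})^m = (a_{i,0})^{m \odot 2^{\odot 0}} = 0$ (interpreting $2^{\odot 0}$ as the empty composition). Each step advances both the lower index $j$ and the exponent count $q$ by one, so after $i$ applications we obtain $(a_{i,i})^{m \odot 2^{\odot i}} = 0$. The only non-automatic point in the proof — really its sole technical obstacle — is the extension of Lemma \ref{lma:koszulinduct}'s first part from integer exponents $k$ to nested $\odot$-expressions; this introduces no new ideas, but since the notation $a^{m_1 \odot \cdots \odot m_n}$ does not enjoy associativity in an $A_\infty$-category, the extension must be made explicit rather than hidden.
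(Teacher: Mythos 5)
Your proof is correct and follows the same route as the paper, which disposes of this lemma with the single sentence ``This is immediate from Lemma \ref{lma:koszulinduct}.'' You have simply made explicit the one piece of bookkeeping the paper leaves implicit --- extending the first part of Lemma \ref{lma:koszulinduct} from integer exponents to nested $\odot$-exponents by re-applying it to the closed diagonal-form morphism $b^M$ with diagonal entry $a^M$ --- and your identification of the data $(P,x,a,c,b,s,t)=(K_j,x_j,a_{i,j},c_{i,j},a_{i,j+1},s_j,s_i)$ and the iteration for the ``in particular'' clause are exactly right (the only blemish is the reversed description ``lower-right-to-upper-left'' of the vanishing component, which should be upper-left-to-lower-right, but the justification you give for its vanishing is the correct one).
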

\begin{proof}
This is immediate from Lemma \ref{lma:koszulinduct}.
\end{proof}

\begin{cor}\label{cor:nilpgen}
If $K$ is a Koszul twisted complex built out of $K'$, such that there
exist integers $m_0,\ldots,m_{n-1}$ for which $(a_{i,0})^{m_i}=0$ for
each $i=0,1,\ldots,n-1$ then $K$ split generates $K'$.
\end{cor}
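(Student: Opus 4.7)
The plan is to induct on $n$, the length of the iterated-cone presentation of $K = K_n$, reducing the statement at each stage via Lemma \ref{lma:nilpinduct} and Corollary \ref{cor:splgen}. The base case $n=0$ is trivial, since then $K = K'$. For the inductive step, assume the statement for Koszul twisted complexes of length less than $n$, and write $K = \Cone(\TO{K_{n-1}[s_{n-1}]}{x_{n-1}}{K_{n-1}})$.

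From the hypothesis $(a_{n-1,0})^{m_{n-1}} = 0$, I would apply Lemma \ref{lma:nilpinduct} $n-1$ times, moving $j$ from $0$ up to $n-1$, to obtain $(a_{n-1,n-1})^{m_{n-1}\odot 2^{\odot (n-1)}} = 0$. Since $a_{n-1,n-1} = x_{n-1}$ by definition, this shows $x_{n-1}$ is nilpotent in the sense of Definition \ref{dfn:nilp}, and Corollary \ref{cor:splgen} then yields that $K = \Cone(x_{n-1})$ split-generates $K_{n-1}$. The next observation is that the truncation $K_{n-1}$ is itself a Koszul twisted complex of length $n-1$ built out of $K_0 = K'$, using exactly the initial sub-collection of data $\{x_0,\ldots,x_{n-2}\}$ and $\{a_{i,j} : 0 \leq j \leq i \leq n-2\}$ already provided for $K$. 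The integers $m_0,\ldots,m_{n-2}$ witness the required nilpotence of the remaining edges $a_{0,0},\ldots,a_{n-2,0}$, so the inductive hypothesis gives that $K_{n-1}$ split-generates $K'$. Concatenating the two split-generation statements proves the corollary.

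The proof is essentially a bookkeeping exercise once the key lemmas are in place. The only subtlety I anticipate is confirming that the initial segment $K_{n-1}$ of a Koszul twisted complex is again a Koszul twisted complex with the induced data — but this is immediate from the layered nature of Definition \ref{dfn:koszul}, since Conditions I and II for indices $i,j \leq n-2$ are exactly what appear in the Koszul structure on $K_{n-1}$. All genuine content has already been absorbed into Lemma \ref{lma:nilpinduct} (which promotes nilpotence along the edges $a_{i,0}$ to nilpotence of the top-level morphisms $x_i = a_{i,i}$) and Corollary \ref{cor:splgen} (which converts such nilpotence into split-generation of $K_i$ by $K_{i+1}$).
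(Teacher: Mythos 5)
Your proof is correct and follows essentially the same route as the paper: both use Lemma \ref{lma:nilpinduct} to promote nilpotence of each edge $a_{i,0}$ to nilpotence of $x_i = a_{i,i}$, then Corollary \ref{cor:splgen} to conclude that $K_{i+1}$ split-generates $K_i$, and chain these facts together. Phrasing it as an explicit induction on $n$ (and noting that the truncation $K_{n-1}$ inherits a Koszul structure) is just a more formal packaging of the paper's level-by-level argument.
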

\begin{proof}
Lemma \ref{lma:nilpinduct} tells us that $x_i=a_{i,i}$ is
nilpotent. Corollary \ref{cor:splgen} then implies that $K_{i+1}$
split-generates $K_i$. Since this holds for each $i$, this implies
that $K=K_n$ split-generates $K_0=K'$.
\end{proof}

\begin{lma}\label{lma:quisoinduct}
Suppose that $K$ is a Koszul twisted complex in which $a_{i,j}$ is a
quasi-isomorphism for some $0\leq j\leq i\leq n-1$. Then $a_{i,j+1}$
is also a quasi-isomorphism
\end{lma}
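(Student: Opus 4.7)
The plan is to show that $\Cone{a_{i,j+1}}$ is acyclic; then $a_{i,j+1}$ is a quasi-isomorphism. The key idea is that $\Cone{a_{i,j+1}}$ admits a two-step filtration whose graded pieces are both copies of $\Cone{a_{i,j}}$ up to shift, and the hypothesis ensures these pieces are acyclic.

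More concretely, I would unpack $\Cone{a_{i,j+1}}$ as a twisted complex with four underlying objects $K_j[s_i+s_j+2]$, $K_j[s_i+1]$, $K_j[s_j+1]$, $K_j$ and differentials given by: two vertical $-x_j$-arrows coming from the cone differentials within $K_{j+1}[s_i+1]$ and $K_{j+1}$; two horizontal $a_{i,j}$-arrows (with appropriate shifts/signs) coming from the diagonal components of $-a_{i,j+1}[1]$; and a diagonal component $c_{i,j}[1]: K_j[s_i+s_j+2]\to K_j$. Consider the subcomplex $F\subset\Cone{a_{i,j+1}}$ spanned by $K_j[s_i+1]$ and $K_j$; this is closed under all differentials because the objects are "targets" in their respective cones and because $a_{i,j}[1]$ maps into $F$. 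The resulting differential on $F$ is $a_{i,j}[1]: K_j[s_i+1]\to K_j$, so $F\cong \Cone{a_{i,j}}$.

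Next, the quotient $\Cone{a_{i,j+1}}/F$ has underlying objects $K_j[s_i+s_j+2]$ and $K_j[s_j+1]$. All differentials into $F$ (namely $-x_j[s_i+2]$, $-x_j[1]$, and $c_{i,j}[1]$) are killed in the quotient, leaving only the horizontal arrow $a_{i,j}[s_j+1][1]: K_j[s_i+s_j+2]\to K_j[s_j+1]$. Thus the quotient is $\Cone{a_{i,j}}[s_j+1]$. This yields a short exact sequence of twisted complexes
\[0 \to \Cone{a_{i,j}} \to \Cone{a_{i,j+1}} \to \Cone{a_{i,j}}[s_j+1] \to 0,\]
which induces a long exact sequence on cohomology.

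The hypothesis that $a_{i,j}$ is a quasi-isomorphism means $\Cone{a_{i,j}}$ is acyclic, and hence so is its shift. The long exact sequence then forces $\Cone{a_{i,j+1}}$ to be acyclic, proving that $a_{i,j+1}$ is a quasi-isomorphism. The only delicate point is the bookkeeping that shows $F$ really is a subcomplex and that the quotient differential reduces cleanly to $a_{i,j}[s_j+1][1]$; this just uses the explicit form of $a_{i,j+1}$ given in Condition II of Definition \ref{dfn:koszul} together with the fact that the higher correction term $c_{i,j}$ lies in $\hom$ to the subcomplex $F$ and therefore disappears in the quotient.
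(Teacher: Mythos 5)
Your proof is correct and is essentially the paper's argument repackaged: the paper applies $\hom(J,-)$ to $K_{j+1}$, observes that $(a_{i,j+1})_*$ is a lower-triangular map of two-step filtered complexes whose diagonal entries are $(a_{i,j})_*$, and concludes via the induced map on the $E_1$-page, which is exactly the filtration of $\Cone{a_{i,j+1}}$ by the sub-twisted-complex $F\cong\Cone{a_{i,j}}$ with quotient $\Cone{a_{i,j}}[s_j+1]$ that you construct. Your cone formulation has the small advantage of treating $\hom(J,-)$ and $\hom(-,J)$ simultaneously (the paper handles the second variance by a separate remark), and your bookkeeping is sound: no arrow of the four-object twisted complex leaves $F$, and the only path between the two quotient objects is the single arrow $-a_{i,j}[s_j+2]$, so the quotient differential is as you claim.
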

\begin{proof}
Let $J$ be an object of the given $A_\infty$-category. We will show
that the induced map
\[\TO{H(\hom(J,K_{j+1}[s_i]))}{(a_{i,j+1})_*}{H(\hom(J,K_{j+1}))}\]
is an isomorphism. Since $K$ is a Koszul twisted complex,
$\hom(J,K_{j+1})$ has the form
\[\left(\hom(J,K_j[1])\oplus\hom(J,K_j[s_j]),
   \left(\begin{array}{cc}
      \partial_{J,K_j}[1] & 0\\
      -x_j[1]          &\partial_{J,K_j}[s_i]
   \end{array}\right)\right)\]
This differential is lower triangular, hence there is a two-step
spectral sequence whose $E_1$-page is $H(\hom(J,K_j[1]))\oplus
H(\hom(J,K_j[s_i]))$ and which converges to $H(\hom(J,K_{j+1}))$. The
map $\left(a_{i,j+1}\right)_*$ has the form
\[   \left(\begin{array}{cc}
      a_{i,j}[s_j+1] & 0\\
      c_{i,j}     & a_{i,j}
   \end{array}\right)\]
and therefore induces a map of filtered complexes between
$\hom(J,K_j[s_i])$ and $\hom(J,K_j)$. This map is an isomorphism on
the $E_1$-page since $a_{i,j}$ is a quasi-isomorphism. Therefore
$(a_{i,j+1})_*$ is an isomorphism as desired. A similar argument
proves that
\[\TO{H(\hom(K_{j+1},J))}{(a_{i,j+1})^*}{H(\hom(K_{j+1}[s_i],J))}\]
is an isomorphism for any test object $J$.
\end{proof}

\begin{cor}\label{cor:quisoimplieszero}
Suppose that $K$ is a Koszul twisted complex such that $a_{i,0}$ is a
quasi-isomorphism for some $i$. Then $K$ is quasi-isomorphic to zero.
\end{cor}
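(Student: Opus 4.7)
The plan is to propagate the quasi-isomorphism property along the index $j$ using the inductive step established in Lemma \ref{lma:quisoinduct}, then use this to identify one of the intermediate cones in the Koszul tower with the zero object, and finally propagate zero upward through the remaining cones.

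First, fix an index $i$ for which $a_{i,0}$ is a quasi-isomorphism. Applying Lemma \ref{lma:quisoinduct} repeatedly, starting from $j=0$ and incrementing $j$ by one at each step, I obtain that $a_{i,j}$ is a quasi-isomorphism for every $j$ with $0 \leq j \leq i$. In particular, taking $j=i$, the morphism $a_{i,i} = x_i \in \hom^0(K_i[s_i], K_i)$ is a quasi-isomorphism.

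Now by Condition I in Definition \ref{dfn:koszul}, $K_{i+1} = \Cone{\TO{K_i[s_i]}{x_i}{K_i}}$. Since $x_i$ is a quasi-isomorphism, the standard long exact sequence of a mapping cone (or direct inspection of the two-term complex $\hom(J, K_i[s_i]) \to \hom(J, K_i)$ for any test object $J$) shows that $K_{i+1}$ is quasi-isomorphic to the zero object.

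Finally, I propagate the vanishing upward through the tower. For each $j$ with $i+1 \leq j \leq n-1$, we have $K_{j+1} = \Cone{\TO{K_j[s_j]}{x_j}{K_j}}$, so if $K_j \simeq 0$ then both terms in the cone are quasi-isomorphic to zero and hence so is $K_{j+1}$. By induction on $j$, starting from $K_{i+1} \simeq 0$, I conclude that $K_n \simeq 0$, i.e.\ $K$ is quasi-isomorphic to zero as required. No step here is a real obstacle, as the inductive mechanism has already been packaged in Lemma \ref{lma:quisoinduct}; the only thing to be careful about is that the propagation of ``quasi-isomorphism'' goes in the direction of increasing $j$ (within a fixed row $i$), whereas the propagation of ``zero object'' goes in the direction of increasing tower index.
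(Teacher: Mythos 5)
Your proof is correct and follows the same route as the paper: iterate Lemma \ref{lma:quisoinduct} to conclude $x_i=a_{i,i}$ is a quasi-isomorphism, so $K_{i+1}=\Cone{x_i}\simeq 0$, and then observe that $K=K_n$ lies in the subcategory generated by $K_{i+1}$ and hence vanishes. Your explicit upward induction through the remaining cones just spells out what the paper compresses into the phrase ``$K_n$ is generated by $K_{i+1}$.''
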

\begin{proof}
By induction from Lemma \ref{lma:quisoinduct}, it follows that $a_{i,j}$
is a quasi-isomorphism for all $0\leq j\leq i$. In particular,
$x_i=a_{i,i}$ is a quasi-isomorphism, hence its cone, $K_{i+1}$, is
quasi-isomorphic to zero. Since $K=K_n$ is generated by $K_{i+1}$ (by
definition), $K$ must also be quasi-isomorphic to zero.
\end{proof}

\subsection{Koszul twisted complexes in cotangent bundles of groups}

Let $\kk$ be a field and let $e_1,\ldots,e_m, e_{m+1},\ldots ,e_{m+n}$
be a collection of graded variables with gradings in negative even
degrees $-s_1,\ldots,-s_{m}$ and $s_{m+1}=\ldots = s_{m+n}=0$. Let $A$
denote the $\ZZ$-graded algebra
\[ \kk[e_1,\ldots,e_m] \otimes \kk[ e_{m+1}^{\pm} , \ldots, e_{m+n}^{\pm}] \]
generated by these variables, considered as a $\ZZ$-graded
$A_\infty$-algebra with vanishing higher products. The algebra $A$ is
the prototype of $H_{-*}(\Omega G)$ for $G$ a compact connected Lie
group that is a product of a simple Lie group and a torus.

Let $\bimod{A}$ denote the category of $A_\infty$-bimodules over
$A$. We summarise some basic results about Koszul complexes (in the
usual sense).

\begin{lma}\label{lma:koszulbasic}
\begin{enumerate}
\item[(a)] There is a unique $A_\infty$ $A$-bimodule structure on the
  field $\kk$, where the $e_i$ for $i=1,\ldots m$ and $1-e_i$ for $i=m+1,\ldots m+n$ annihilate the module and the higher
  $A_\infty$-operations are zero (for grading reasons).
\item[(b)] Let $K(e_i)$ denote the complex
  $\TODASH{A[-s_i+1]}{-e_i[1]}{A}$ for $i=1,\ldots, m$ and the complex
  $\TODASH{A[1]}{-1[1]+e_i[1]}{A}$ for $i=m+1,\ldots m+n$, and let
  $K(\underline{e})$ be the tensor product $K(e_1)\otimes\cdots\otimes
  K(e_{m+n})$. Let $\kk$ be the ground field considered as an
  $A_\infty$-bimodule with trivial higher products. The complex
  $K(\underline{e})$ is the standard Koszul resolution of $\kk$.
\end{enumerate}
\end{lma}

\begin{lma}\label{lma:grps}
In the $A_\infty$-category $\Tw(\bimod{A})$, the object
$K(\underline{e})$ is a Koszul twisted complex built out of $A$, with
	shifts $s_i$, edges $a_{i,0}=e_i$ for $i=1,\ldots, m$ and $a_{i,0}=1-e_i$ for $i=m+1,\ldots m+n$, and diagonal morphisms $c_{i,j}=0$.
\end{lma}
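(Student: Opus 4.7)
The plan is to read off the Koszul twisted complex structure directly from the standard tensor-product description of $K(\underline{e})$, using only the commutativity of $A$ and the fact that the twisted-complex differential of a Koszul complex is built from multiplications by the generators $e_i$. Set $K_0 := A$ and, for $0 \leq j \leq m-1$, put $K_{j+1} := K_j \otimes_A K(e_{j+1})$, so that $K_m = K(\underline{e})$. Since $K(e_{j+1})$ is the two-term cone $A[-s_{j+1}+1]\xdashrightarrow{-e_{j+1}[1]}A$, tensoring with $K_j$ exhibits $K_{j+1}$ as a twisted complex of the form $\Cone{x_j}$, where $x_j\in\hom^0(K_j[s_{j+1}],K_j)$ is (right) multiplication by $e_{j+1}$. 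This produces the iterated cone presentation demanded by Condition I of Definition \ref{dfn:koszul}, with the shifts $s_i$ named in the statement.

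For Condition II, I will define $a_{i,j}\in\hom^0(K_j[s_i],K_j)$ to be multiplication by $e_i$ on $K_j$ (the left and right bimodule actions agree because $A$ is commutative), and I will take all off-diagonal morphisms $c_{i,j}$ to be zero. Two boundary cases are then immediate: $a_{i,0}$ is multiplication by $e_i$ on $A$ itself, which is the element $e_i$, while $a_{i,i}=x_i$ by the very choice of $x_i$. The $\mu^1_{\Tw}$-closedness of $a_{i,j}$ is automatic, since the differential of the twisted complex $K_j$ is a sum of multiplications by the various $e_k$, each of which commutes strictly with multiplication by $e_i$ in the commutative algebra $A$.

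The remaining check is that $a_{i,j+1}$ has the block form prescribed in Condition II. Because multiplication by $e_i$ on $K_{j+1}$ respects the splitting of its underlying bimodule into $K_j[s_j+1]\oplus K_j$, it acts by $a_{i,j}[s_j+1]$ on the top summand and by $a_{i,j}$ on the bottom; the absence of any off-diagonal term reflects precisely the commutativity $e_i e_{j+1}=e_{j+1}e_i$ in $A$, which makes $c_{i,j}=0$ a legitimate choice.

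The only genuine source of care is the Koszul sign bookkeeping in Seidel's twisted-complex formalism, where the shifts $[s_i+1]$ and $[s_j+1]$ introduce signs when $\mu^2_{\Tw}$ is evaluated. I expect this to be the only real, and entirely routine, obstacle: one must verify that these signs convert the bare commutativity relation $e_ie_{j+1}=e_{j+1}e_i$ into the required vanishing inside $\hom^0_{\Tw}(K_{j+1}[s_i],K_{j+1})$. This is the same sign calculation that certifies the classical Koszul complex to be a chain complex, so no new input is required beyond invoking that standard fact.
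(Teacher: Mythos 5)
Your proof is correct and follows essentially the same route as the paper: both build $K(\underline{e})$ as an iterated tensor product with the two-term complexes $K(e_i)$, identify each stage as a cone on multiplication by the next generator, and observe that multiplication by $e_i$ acts diagonally on each cone (so $c_{i,j}=0$) because $A$ is commutative. The sign bookkeeping you flag is likewise left implicit in the paper's own argument, so no further input is needed.
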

\begin{proof}
Let $C$ be a complex of ordinary $A$-modules let $e_i\star$ denote the
left module action of $e_i\in A$. The total tensor product complex for
$C\otimes K(e_i)$ exhibits $C\otimes K(e_i)$ as a cone on the morphism
$e_i\star\colon C[s_i]\to C$. If $C$ is itself a cone
$\Cone{\TO{C_1}{\gamma}{C_2}}$ then this module action has the form
\begin{center}
\begin{tikzpicture}[->,>=stealth',shorten >=1pt,auto,node distance=2cm and 2.5cm,
  thick]

  \node at (0,0) (1) {$C_1[s_i+1]$};
  \node at (0,-3) (2) {$C_1[1]$};
  \node at (4,0) (3) {$C_2[s_i]$};
  \node at (4,-3) (4) {$C_2$};

  \path
    (1) edge node [left] {$(e_i\star)[1]$} (2)
    (3) edge node [right] {$e_i\star$} (4)
    (1) edge node [below left] {$0$} (4);

  \path[dashed]
    (1) edge node [above] {$-\gamma[s_i+1]$} (3)
    (2) edge node [below] {$-\gamma[1]$} (4);
\end{tikzpicture}\end{center}
If we take $C_k=K(e_1)\otimes\cdots\otimes K(e_k)$ then we deduce that $C_{k+1}=\Cone{e_{k+1}\star}$.
\end{proof}

\begin{cor}\label{cor:grps}
Let $G$ be a compact, connected Lie group and let $\kk$ be a field of
characteristic $p\geq 0$ such that $p$ is not a torsion prime for
$G$. Then, in the wrapped Fukaya category of $T^*G$, the zero-section
is quasi-isomorphic to a Koszul twisted complex built out of $T^*_1G$.
\end{cor}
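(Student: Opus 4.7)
The plan is to translate the problem from the Fukaya category to a purely algebraic setting using Abouzaid's embedding, apply the formality theorem proved earlier, and then invoke Lemma \ref{lma:grps}.

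First, I would use the theorem of Abouzaid \cite{AbouzaidCotFib, AbouzaidBasedLoops} recalled in the introduction: there is a full and faithful $A_\infty$-embedding
\[
\wW(T^*G;\kk) \hookrightarrow \bimod{C_{-*}(\Omega G;\kk)}
\]
which sends the cotangent fibre $T^*_1G$ to the free bimodule $C_{-*}(\Omega G;\kk)$ and the zero-section $G$ to the trivial bimodule $\kk$. Thus the statement is reduced to constructing a Koszul twisted complex in $\Tw(\bimod{C_{-*}(\Omega G;\kk)})$, built out of the free bimodule, and quasi-isomorphic to $\kk$.

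Next I would invoke Theorem \ref{thm:formal}(i), which provides an $A_\infty$-quasi-isomorphism $H_{-*}(\Omega G;\kk) \to C_{-*}(\Omega G;\kk)$ under the $p$-torsion-free hypothesis. Since $H^*(G;\ZZ)$ has no $p$-torsion, the classical theorem of Bott identifies $H_{-*}(\Omega G;\kk)$ with a polynomial algebra $A = \kk[e_1,\ldots,e_m]$ on generators of even nonpositive degree. A quasi-isomorphism of $A_\infty$-algebras induces a quasi-equivalence of the corresponding categories of $A_\infty$-bimodules (sending the free bimodule to the free bimodule and the trivial bimodule to the trivial bimodule, both being characterised by their underlying chain complexes up to quasi-isomorphism). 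So it suffices to exhibit a Koszul twisted complex in $\Tw(\bimod{A})$ built out of $A$ and quasi-isomorphic to $\kk$.

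This is exactly what Lemma \ref{lma:grps} together with Lemma \ref{lma:koszulbasic}(b) provides: the tensor product $K(\underline{e}) = K(e_1) \otimes \cdots \otimes K(e_m)$ is a Koszul twisted complex built out of $A$ with edges $a_{i,0} = e_i$, and it is a resolution of $\kk$ as an $A$-bimodule. Transporting this Koszul twisted complex back along the composite $A_\infty$-functor
\[
\Tw(\bimod{A}) \xrightarrow{\sim} \Tw(\bimod{C_{-*}(\Omega G;\kk)}) \supset \Tw(\wW(T^*G;\kk))
\]
and applying Lemma \ref{lma:functorkoszul} yields a Koszul twisted complex in the derived wrapped Fukaya category built out of $T^*_1G$ and quasi-isomorphic to the image of $\kk$, which is $G$.

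The main obstacle is the bookkeeping needed to promote the $A_\infty$-algebra quasi-isomorphism of Theorem \ref{thm:formal} to the statement that the resulting quasi-equivalence of bimodule categories sends $\kk$ to $\kk$ and preserves the Koszul twisted complex structure; this should be routine since the relevant objects are defined in terms of their underlying chain complexes together with structure maps that are compatible up to coherent homotopy, and Lemma \ref{lma:functorkoszul} is precisely designed to handle the transport of Koszul twisted complexes under $A_\infty$-functors.
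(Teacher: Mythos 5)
Your proposal is correct and follows essentially the same route as the paper: Abouzaid's full and faithful embedding identifies the zero-section with the trivial bimodule $\kk$ and the cotangent fibre with the free bimodule, Theorem \ref{thm:formal} replaces $C_{-*}(\Omega G;\kk)$ by the polynomial algebra $H_{-*}(\Omega G;\kk)$, and Lemma \ref{lma:grps} supplies the Koszul twisted complex, which is then transported back by fully-faithfulness. Your extra care in invoking Lemma \ref{lma:functorkoszul} to carry the Koszul structure across the quasi-equivalence induced by the formality quasi-isomorphism is a point the paper leaves implicit, but it is the same argument.
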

	\begin{proof} As usual, by passing to a finite cover (which is a homotopy equivalence away from torsion primes for $G$), we can restrict our attention to the case of a product of a simple non-abelian Lie group and a torus. By Theorem \ref{thm:formal}, the dga $C_{-*}(\Omega G;\kk)$ of cubical
chains on the based loop space of $G$, equipped with the Pontryagin
product, is quasi-isomorphic to its homology, $H_{-*}(\Omega
G;\kk)$. This homology group is tensor product of a polynomial ring on generators of negative even degree and a Laurent polynomial ring in degree 0 so Lemma \ref{lma:grps} applies.

There is a full and faithful embedding of the wrapped Fukaya category
of $T^*G$ into $\bimod{C_{-*}(\Omega G;\kk)}$ \cite{AbouzaidCotFib}
which takes a Lagrangian brane to its Floer $A_\infty$-bimodule with
the cotangent fibre. Under this embedding, the $A_\infty$-bimodule
$\kk$ represents the zero-section, since the zero-section and the
cotangent fibre intersect at precisely one point. Lemma \ref{lma:grps}
tells us that there is a Koszul twisted complex $K$ quasi-isomorphic
to $\kk$, built out of $C_{-*}(\Omega G;\kk)$. Since the wrapped
Fukaya category maps fully faithfully into the bimodule category, this
tells us that there is a Koszul twisted complex in $\wW(T^*G)$
quasi-isomorphic to the zero-section, built out of $T^*_1G$.
\end{proof}

\subsection{Twisted complexes of functors}\label{sct:twcpx-functors}

Recall that an $A_{\infty}$-functor
$\mathcal{F}\colon\mathcal{A}\to\bB$ comprises an assignment of
objects $\mathcal{F}X\in\bB$ to every object $X\in\mathcal{A}$ and a
sequence of multilinear maps
\[\mathcal{F}^k\colon\hom_\aA(X_k,X_{k+1})\otimes\cdots\otimes\hom_\aA(X_1,X_2)\to\hom_{\bB}(FX_1,FX_2)[1-k]\]
satisfying a sequence of equations (see {\cite[Eq
    (1.6)]{Seidel}}). The non-unital $A_{\infty}$-functors
$\mathcal{A}\to\bB$ form an $A_{\infty}$-category
$\nufun(\mathcal{A},\bB)$. Given $F,G\in \nufun(\mathcal{A},\bB)$, an
element $T\in\hom_{\nufun}^g(F,G)$ is given by a {\em pre-natural
  transformation} $T^0,T^1,T^2,\ldots$ where $T^0$ assigns an element
$T^0_X\in\hom^g_{\bB}(FX,GX)$ for each $X$ in $\mathcal{A}$ and where
$T^k\colon\hom(X_k,X_{k+1})\otimes\cdots\otimes\hom(X_1,X_2)\to\hom(X_1,X_{k+1})[g-k]$
are multilinear maps.

There is an $A_{\infty}$-structure $\lambda$ on $\nufun(\aA,\bB)$,
described in {\cite[Eq. 1.9]{Seidel}}. The following property of this
$A_{\infty}$-structure will be important to us: if
$S,T\in\hom_{\nufun}(\OP{Id}_{\aA},\OP{Id}_{\aA})$ then we have
\begin{equation}\label{eq:lambdamu}
\left(\lambda^k(T_k,\ldots,T_1)\right)^0_X=\mu_{\aA}^k((T_k^0)_X,\ldots,(T_1^0)_X).
\end{equation}

Let $(\bigoplus_iF_i,\{T_{ij}\})$ be a twisted complex of functors in
$\Tw(\nufun(\aA,\bB))$. Using Equation \eqref{eq:lambdamu}, the
Maurer-Cartan equation
\[\sum\lambda^k(T,\ldots,T)=0\]
implies the Maurer-Cartan equation
\[\sum\mu^k(T^0_X,\ldots,T^0_X)=0\]
for the twisted complex $(\bigoplus_iF_iX,\{(T_{ij}^0)_X\})$ in
$\bB$. The following lemma is easy to verify.

\begin{lma}\label{lma:twcpxfunctors}
Let $\aA$ and $\bB$ be strictly unital
$A_{\infty}$-categories. Suppose that $F=(\bigoplus_iF_i,\{S_{ij}\})$
and $G=(\bigoplus_iG_i,\{T_{ij}\})$ are quasi-isomorphic twisted
complexes in $\Tw\left(\nufun(\aA,\bB)\right)$. Then for any
$X\in\aA$, the objects $(\bigoplus_iF_iX,\{S_{ij}^0X\})$ and
$(\bigoplus_iG_iX,\{T_{ij}^0X\})$ are quasi-isomorphic in $\Tw(\bB)$.
\end{lma}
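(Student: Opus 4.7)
The plan is to realize evaluation at $X$ as a strict $A_\infty$-functor $\ev_X \colon \nufun(\aA,\bB) \to \bB$, defined on objects by $F \mapsto FX$ and on morphisms by $\ev_X^1(T) = T^0_X$, with all higher components $\ev_X^k$ ($k \geq 2$) set to zero. Once this is done, applying the twisted complex construction yields a strict $A_\infty$-functor
\[\Tw(\ev_X)\colon\Tw\bigl(\nufun(\aA,\bB)\bigr)\longrightarrow\Tw(\bB),\]
which by inspection sends $(\bigoplus_i F_i,\{S_{ij}\})$ to $(\bigoplus_i F_i X,\{(S_{ij})^0_X\})$. The lemma then follows from the general fact that $A_\infty$-functors preserve quasi-isomorphisms, since they descend to honest functors of cohomological categories.

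The only nontrivial step is verifying that $\ev_X$, so defined, satisfies the $A_\infty$-functor equations. With $\ev_X^k=0$ for $k\geq 2$, these reduce to the single family of identities
\[(\lambda^d(T_d,\ldots,T_1))^0_X \;=\; \mu_\bB^d\bigl((T_d)^0_X,\ldots,(T_1)^0_X\bigr),\qquad d\geq 1,\]
which generalize Equation \eqref{eq:lambdamu} from composable pre-natural transformations of the identity functor to composable pre-natural transformations $T_i\colon F_{i-1}\to F_i$ between arbitrary functors. This identity would follow from a careful reading of {\cite[Eq.\ (1.9)]{Seidel}}: every term of the $n$-th component of $\lambda^d(T_d,\ldots,T_1)$ is an application of some $\mu_\bB^\ast$ to an alternating string of pre-natural-transformation components $T_i^{k_i}$ and functor components $F_j^{k'_j}$, with the morphism inputs $a_1,\ldots,a_n \in \aA$ distributed among them. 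At $n=0$ there are no such inputs, and since $A_\infty$-functors have no zeroth component, all functor-component factors must drop out and only the $T_i^0$ components survive; what remains is precisely $\mu_\bB^d((T_d)^0_X,\ldots,(T_1)^0_X)$.

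I expect the only genuine obstacle to be a notational one: unpacking Seidel's formula for $\lambda^d$ and confirming which summands vanish when no $\aA$-morphisms are fed in. Beyond that point the argument is formal: $\Tw$ acts on strict $A_\infty$-functors by applying them to the underlying objects and to the matrix entries of twisted complexes, so the two objects in the conclusion of the lemma are literally $\Tw(\ev_X)(F)$ and $\Tw(\ev_X)(G)$, and the preservation of quasi-isomorphisms under $A_\infty$-functors completes the proof.
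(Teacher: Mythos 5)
Your proposal is correct and is essentially the argument the paper intends: the paper records exactly the identity $(\lambda^k(T_k,\ldots,T_1))^0_X=\mu^k((T_k^0)_X,\ldots,(T_1^0)_X)$ in Equation \eqref{eq:lambdamu} and then declares the lemma ``easy to verify,'' and your packaging of that identity as the statement that evaluation at $X$ is a strict (and, since $\bB$ is strictly unital, unital) $A_\infty$-functor $\ev_X$ with $\ev_X^{\geq 2}=0$, followed by applying $\Tw(\ev_X)$ and using that $A_\infty$-functors preserve quasi-isomorphisms, is precisely the intended verification. Your observation that the $n=0$ terms of Seidel's formula contain no functor components (functors having no zeroth-order term) is the correct reason the identity extends from endomorphisms of the identity functor to arbitrary pre-natural transformations.
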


\section{Quantum cohomology and generation of the diagonal}\label{sct:qh}

\subsection{Block decomposition of quantum cohomology}

The quantum cohomology of a monotone symplectic manifold decomposes as
a direct sum of local rings. It is not special to quantum cohomology,
and the proof cited works for any (super) commutative $\kk$-algebra, finite-dimensional over $\kk$.

\begin{lma}\label{lma:qhnilp}
Let $X$ be a compact, monotone symplectic manifold and $\kk$ be an arbitrary field. Then there exists a finite set $W$ and
idempotent elements
\[\left\{e_\alpha\in \QH^{ev}(X;\kk)\right\}_{\alpha\in W}\]
such that $1=\sum_{\alpha\in W}e_\alpha$ and $e_\alpha e_\beta=0$ if
$\alpha\neq\beta$. Moreover, each summand
\[\QH(X;\kk)_\alpha=e_\alpha \QH(X;\kk)\]
is a subalgebra which is a local ring with nilpotent maximal ideal. In
other words, for every element $x\in \QH(X;\kk)_\alpha$, either
$x^m=0$ for some $m$ or else there exists $y\in \QH(X;\kk)_\alpha$
such that $xy=e_\alpha$.
\end{lma}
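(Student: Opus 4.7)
The plan is to recognize, as the preamble to the lemma notes, that this is the standard Peirce (or block) decomposition of a finite-dimensional commutative $\kk$-algebra over an algebraically closed field. Since $X$ is compact, $R := \QH^{ev}(X;\kk)$ is finite-dimensional over $\kk$, and it is commutative because quantum product is super-commutative; all the idempotents will be constructed inside $R$.

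First I would locate the nilradical: as $R$ is Artinian, its nilradical $N$ is a nilpotent ideal with $N^s = 0$ for some $s$. The quotient $R/N$ is a finite-dimensional reduced commutative $\kk$-algebra, hence a finite product of field extensions of $\kk$; since $\kk$ is algebraically closed each factor is $\kk$, yielding $R/N \cong \prod_{\alpha \in W} \kk$ for a finite index set $W$, with standard orthogonal idempotents $\bar{e}_\alpha$ summing to $1$. Next I would lift each $\bar{e}_\alpha$ to an idempotent $e_\alpha \in R$: this is the classical lifting of idempotents modulo a nilpotent ideal (for instance via the iteration $x \mapsto 3x^2 - 2x^3$, which stabilises after finitely many steps because $N$ is nilpotent), and once one idempotent is fixed the remaining lifts can be adjusted inductively to produce a mutually orthogonal complete family with $\sum_\alpha e_\alpha = 1$.

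With the idempotents in hand, setting $\QH(X;\kk)_\alpha := e_\alpha \QH(X;\kk)$ produces the required direct sum decomposition, and each summand is a unital subalgebra with unit $e_\alpha$. For the local-ring property, the candidate maximal ideal $\mathfrak{m}_\alpha$ consists of $e_\alpha N$ together with the odd part $e_\alpha \QH^{odd}(X;\kk)$, and is the kernel of the residue map to $e_\alpha R / e_\alpha N \cong \kk$. In characteristic $\neq 2$, super-commutativity forces odd elements to square to zero, from which a direct computation shows $\mathfrak{m}_\alpha$ is nilpotent; in characteristic $2$, super-commutativity is ordinary commutativity and the preceding steps apply verbatim to the full algebra $\QH(X;\kk)$. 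Any element outside $\mathfrak{m}_\alpha$ has a nonzero scalar image in the residue field and is inverted by a finite geometric series in its nilpotent part, giving the desired dichotomy between nilpotents and units. The only technical ingredient is the classical lifting of idempotents modulo a nilpotent ideal; everything else is routine commutative algebra.
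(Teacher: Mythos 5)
Your proof is correct, but it takes a genuinely different route from the paper's. The paper embeds $\QH(X;\kk)$ into $\OP{End}_\kk(\QH(X;\kk))$ via multiplication operators and invokes the Jordan--Chevalley decomposition: the even semisimple elements form a subalgebra $S$, the algebra decomposes into simultaneous generalised eigenspaces for $S$, and the idempotents $e_\alpha$ appear as the components of $1$ in that decomposition; nilpotence versus invertibility is then read off from whether $\alpha(x_s)$ vanishes. You instead run the standard Artinian-ring argument: pass to the reduced quotient $R/N$, identify it with $\kk^{|W|}$ using algebraic closedness, and lift idempotents through the nilpotent ideal $N$. Both are textbook proofs of the Peirce decomposition; the paper's version has the side benefit of producing the semisimple part $x_s=p(x)$ of each element as a polynomial in $x$ (which the paper uses elsewhere, e.g.\ when discussing $\HF(L,L)$ as a module over the ring of semisimple elements), while yours is closer to the scheme-theoretic picture (connected components of a zero-dimensional $\OP{Spec}$) and, notably, is more careful in characteristic $2$: the paper's final step asserts that odd elements square to zero by graded-commutativity, which is vacuous when $2=0$, whereas you treat that case by running the argument on the full (now honestly commutative) algebra. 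One small point you should make explicit in the characteristic-$2$ branch: the lemma requires $e_\alpha\in\QH^{ev}(X;\kk)$, and lifting idempotents in the full algebra does not obviously keep them even. It does, by a one-line check: writing $e=e_0+e_1$ with $e_0$ even and $e_1$ odd, the cross-terms in $e^2$ carry a factor of $2$ and vanish, so $e=e^2=e_0^2+e_1^2$ is even and hence $e_1=0$. With that remark added, your argument is complete.
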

\begin{proof} A finite-dimensional algebra over a field is an Artinian ring. Now, a commutative Artinian ring is uniquely (up to isomorphism) a finite direct product of commutative Artininan local rings (see for example \cite[Theorem 8.7]{atiyahmacdonald}). The assertions follow easily from this.
\end{proof}

\begin{rmk}
  The property of a commutative ring splitting as a finite direct sum
  of local rings with nilpotent maximal ideal is equivalent to being
  the ring of functions on a zero-dimensional Noetherian scheme. In
  the case of quantum cohomology of a toric Fano variety, which is
  supported in even degree and hence commutative, the zero-dimensional
  scheme in question is the critical locus of the mirror
  superpotential, which consists of isolated critical points.
\end{rmk}

\begin{rmk}
  For each $\alpha$, let $\mathfrak{m}_\alpha$ be the maximal ideal of
  the local ring $QH(X;\kk)_\alpha$, then it follows by the
  nullstellensatz that the residue field $\kk_\alpha=
  QH(X;\kk)_{\alpha}/\mathfrak{m}_\alpha$ is a finite extension of
  $\kk$.
\end{rmk}

\subsection{Lagrangians in $X^{-}\times X$}

Recall that the quantum cohomology is isomorphic, via the PSS map, to
the self-Floer cohomology of the diagonal $\Delta\subset (X^{-} \times
X,(-\omega)\oplus\omega)$. It therefore admits a chain-level
description as an $A_\infty$-algebra $\CF(\Delta,\Delta;\kk)$. We can
transfer this $A_\infty$-structure to the quantum cohomology via
homological perturbation. Seidel {\cite[Lem. 4.2]{Seidel}} tells us
that the idempotents $e_\alpha$ in $\QH(X;\kk)$ occur as the first
part $\mathcal{E}_\alpha^1$ of an {\em idempotent up to homotopy},
$\mathcal{E}_\alpha$, defining an object in the Fukaya category of
$X^{-}\times X$ which behaves like a summand of $\Delta$ and which we
will write as $\Delta_\alpha$. Note that
\[\HF(\Delta_\alpha,\Delta_\beta;\kk)=e_\alpha \QH(X;\kk)e_\beta=
\begin{cases}
\QH(X;\kk)_\alpha&\mbox{ if }\alpha=\beta\\
0&\mbox{ otherwise.}
\end{cases}\]
Let $L\subset X$ be a Lagrangian submanifold. Under the zeroth
closed-open map $CO^0\colon \QH(X;\kk)\to \HF(L,L;\kk)$, the
idempotent $e_\alpha$ is sent to an idempotent (possibly zero) in
$\HF(L,L;\kk)$. Again, this lifts to an idempotent up to homotopy and
defines an object $L_\alpha$ in the Fukaya category
$D^{\pi}\fF(X;\kk)$. Since the identity element splits as a sum of
orthogonal idempotents, the Fukaya category splits as a disjoint union
of subcategories $D^{\pi}\fF(X;\kk)_\alpha$ comprising precisely those
summands in which $CO^0(e_\alpha)$ hits the identity element.

\begin{rmk} \label{boilsdown}
We emphasise that $L_\alpha\neq 0$ if and only if $CO^0(e_\alpha)\neq
0\in \HF(L,L;\kk)$. In Section \ref{sct:examples} below, much of the
challenge will be to decide, given a Lagrangian $L$ with
$\HF(L,L;\kk)\neq 0$, for which $\alpha$ is
$\HF(L_\alpha,L_\alpha;\kk)\neq 0$? By {\cite[Proposition
    1.2]{Sheridan}}, we have $CO^0(2c_1(X))=2\mathfrak{m}_0(L)1_L$,
so, in the happy circumstance that the local summands of $\QH(X;\kk)$
are generalised eigenspaces of $2c_1(X)$ with distinct eigenvalues,
one knows that $L$ belongs to the summand corresponding to the
eigenvalue $2\mathfrak{m}_0(L)$. In many interesting examples, the
local summands refine the splitting into generalised eigenspaces, so
one needs more information than $\mathfrak{m}_0(L)$ to decide for
which summands $\alpha$ we have $L_\alpha\neq 0$. See Remark
\ref{rmk:summands}.
\end{rmk}

\begin{lma}\label{lma:diaggen}
Suppose $M$ is an object in the Fukaya category of $X^{-}\times X$
which is quasi-isomorphic to a Koszul twisted complex built out of
$\Delta$. Then $M$ split-generates the subcategory
$D^{\pi}\fF(X;\kk)_\alpha$ if and only if $\HF(M,\Delta_\alpha)\neq
0$.
\end{lma}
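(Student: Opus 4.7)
The plan is to exploit the block decomposition $\Delta\simeq\bigoplus_\alpha\Delta_\alpha$ in $D^\pi\fF(X^-\times X;\kk)$ together with the fact that each summand $\QH(X;\kk)_\alpha$ is a local ring with nilpotent maximal ideal (Lemma \ref{lma:qhnilp}), reducing to the extreme cases handled by Corollaries \ref{cor:nilpgen} and \ref{cor:quisoimplieszero}. Concretely, I would decompose $M\simeq\bigoplus_\alpha M_\alpha$ as a direct sum of Koszul twisted complexes built out of the $\Delta_\alpha$, and show that each $M_\alpha$ is either quasi-isomorphic to zero or a split-generator of $\Delta_\alpha$.

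To set up the decomposition, note that each edge $a_{i,0}\in\hom^0(\Delta[s_i],\Delta)$ represents a cohomology class $[a_{i,0}]\in\QH^{s_i}(X;\kk)=\bigoplus_\alpha\QH^{s_i}(X;\kk)_\alpha$, which splits via the idempotents $e_\alpha$. Since the block decomposition is orthogonal (off-diagonal components on cohomology vanish), one can arrange the edges to respect the splitting, giving $M\simeq\bigoplus_\alpha M_\alpha$, where $M_\alpha$ is built out of $\Delta_\alpha$ with edges representing $e_\alpha[a_{i,0}]\in\QH(X;\kk)_\alpha$. Conceptually, this should be an application of Lemma \ref{lma:functorkoszul} to the quilted convolution endofunctor by $\Delta_\alpha$, which preserves the Koszul twisted complex structure.

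For each $\alpha$, the edges of $M_\alpha$ lie in the local ring $\QH(X;\kk)_\alpha$ and so are each either nilpotent or a unit. If one edge is a unit, it represents a quasi-isomorphism $\Delta_\alpha[s_i]\to\Delta_\alpha$ and Corollary \ref{cor:quisoimplieszero} yields $M_\alpha\simeq 0$; if all edges are nilpotent, Corollary \ref{cor:nilpgen} yields that $M_\alpha$ split-generates $\Delta_\alpha$. Orthogonality of the block decomposition gives $\HF(M_\beta,\Delta_\alpha)=0$ for $\beta\neq\alpha$, and hence $\HF(M,\Delta_\alpha)=\HF(M_\alpha,\Delta_\alpha)$, which vanishes exactly when $M_\alpha\simeq 0$ and is nonzero exactly when $M_\alpha$ split-generates $\Delta_\alpha$. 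Split-generation of $\Delta_\alpha$ in $D^\pi\fF(X^-\times X;\kk)$ then translates, via the quilted action of $\Delta_\alpha$ as projection onto the $\alpha$-summand, into split-generation of $D^\pi\fF(X;\kk)_\alpha$.

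The main obstacle I anticipate is making the decomposition $M\simeq\bigoplus_\alpha M_\alpha$ rigorous at the chain level: the idempotents $\mathcal{E}_\alpha$ are only defined up to homotopy, and one must check that the whole Koszul twisted complex structure (including the auxiliary morphisms $a_{i,j}$ and $c_{i,j}$ for $j<i$) descends compatibly to the $\alpha$-summands. Appealing to Lemma \ref{lma:functorkoszul} applied to the convolution-by-$\Delta_\alpha$ functor is the natural way to circumvent this, but one must verify that this functor is defined as a genuine $A_\infty$-functor on the relevant category of twisted complexes and that its effect on edges is indeed multiplication by $e_\alpha$ on cohomology.
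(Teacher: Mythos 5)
Your proposal is correct and follows essentially the same route as the paper's proof: split $M\simeq\bigoplus_\alpha M_\alpha$ using the orthogonal decomposition $\Delta=\bigoplus_\alpha\Delta_\alpha$, observe via Lemma \ref{lma:qhnilp} that each edge in $\QH(X;\kk)_\alpha$ is either nilpotent or invertible, and conclude by Corollaries \ref{cor:quisoimplieszero} and \ref{cor:nilpgen} respectively, with $\HF(M,\Delta_\alpha)=\HF(M_\alpha,\Delta_\alpha)$ detecting which case occurs. Your closing remark about making the chain-level splitting rigorous identifies a point the paper passes over quickly, but it does not change the argument.
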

\begin{proof}
Since $\Delta=\bigoplus_{\alpha\in W}\Delta_\alpha$ and the objects
$\Delta_\alpha$ are orthogonal in the Fukaya category, the Koszul
twisted complex representing the object $M$ splits as a direct sum
$\bigoplus M_\alpha$ of objects each of which is quasi-isomorphic to a
Koszul twisted complex $K_\alpha$ built out of $\Delta_\alpha$. We
will analyse these independently of one another, so for the remainder
of the proof we will fix $\alpha\in W$ and omit the
$\alpha$-decorations from the morphisms in the Koszul twisted complex.

By Lemma \ref{lma:qhnilp}, the morphisms $a_{i,0}$ in the Koszul
twisted complex $K_\alpha$ are all either nilpotent or invertible in
the ring $\QH(X;\kk)_\alpha$. In case one of them is invertible, it
gives a quasi-isomorphism $\Delta_\alpha\to\Delta_\alpha$ and hence,
by Corollary \ref{cor:quisoimplieszero}, $M_\alpha=0$. In case none of
the $a_{i,0}$ is invertible, they are all nilpotent, and Corollary
\ref{cor:nilpgen} implies that $M_\alpha$ split-generates
$\Delta_\alpha$. Therefore $M_\alpha$ split-generates $\Delta_\alpha$
if and only if $M_\alpha\neq 0$.

Since $M_\alpha$ is itself generated by $\Delta_\alpha$, and since
$\HF(M,\Delta_\alpha;\kk)=\HF(M_\alpha,\Delta_\alpha;\kk)$, we see
that $M_\alpha\neq 0$ if and only if $\HF(M,\Delta_\alpha;\kk)\neq 0$;
hence $M_\alpha$ split-generates $\Delta_\alpha$ if and only if
$\HF(M,\Delta_\alpha;\kk)\neq 0$.
\end{proof}

\section{Quilted Floer theory}\label{sct:quilt}

In Section \ref{sct:stdquilt}, we will briefly review the idea of
quilted Floer theory in the form it appears in \cite{MWW}. We will
need this theory to work in a slightly more general setting, involving
flat line bundles (Section \ref{sct:locsysquilt}) and some mildly
noncompact Lagrangians in exact manifolds with non-trivial fundamental
group (Section \ref{sct:wrappedquilt}).  In each case, we will
indicate what technical modifications need to be made. We expect that
the construction given here can be generalised vastly and carried out
in a more natural setting by appealing to the recent work of Fukaya
\cite{Fbetterquilt}.

\subsection{Standard theory}\label{sct:stdquilt}

Given a symplectic manifold $Z$ with symplectic form $\omega$, we will
use $Z^{-}$ to denote the same manifold equipped with the symplectic
form $-\omega$. A {\em Lagrangian correspondence} between $Y$ and $Z$
is a Lagrangian submanifold $C\subset Y^{-}\times Z$. We will write
$C^{-}$ for the same submanifold considered as a correspondence in
$Z^{-}\times Y$.

A {\em generalised Lagrangian correspondence from $Y$ to $Z$} is a
sequence $\underline{L}=\{L_{i,i+1}\subset A_i^{-}\times
A_{i+1}\}_{i=0}^{k-1}$ of Lagrangian correspondences with $A_0=Y$ and
$A_k=Z$. A brane structure on a generalised correspondence comprises a
choice of orientation, relative spin structure and grading on each
$L_{i,i+1}$ (the grading may only live in $\ZZ/2$). We denote by
$\underline{L}^T$ the generalised Lagrangian correspondence
$\left(L_{k-1,k}^-,\ldots,L_{01}^-\right)$, where the minus sign
denotes a reversal of orientation and associated brane data.

Given two generalised correspondences $\underline{K}$ from $Z_1$ to
$Z_2$ and $\underline{L}$ from $Z_2$ to $Z_3$, there is a concatenated
generalised correspondence $(\underline{K} , \underline{L}) $ from
$Z_1$ to $Z_3$. A generalised Lagrangian correspondence from $\{pt\}$
to $Z$ is called a {\em generalised Lagrangian submanifold} of $Z$.

Given
\begin{equation}\label{eq:gls}
\underline{K}=\{K_{i,i+1}\subset A_i^{-}\times A_{i+1}\}_{i=0}^{k-1}\qquad
\underline{L}=\{L_{i,i+1}\subset B_i^{-}\times B_{i+1}\}_{i=0}^{\ell-1}
\end{equation}
generalised Lagrangian submanifolds of $Z=A_k=B_\ell$, a {\em generalised
intersection point} is a tuple
\[(x_0,x_1,\ldots,x_{k+\ell})\in A_0\times\cdots\times A_{k-1}\times Z\times B_{\ell-1}\times\cdots\times B_0\]
such that $(x_i,x_{i+1})\in K_{i,i+1}$ for $i<k$ and $(x_i,x_{i+1})\in
L^-_{\ell-i,\ell-i+1}$ for $i\geq k$. Let us denote by
$\underline{K}\cap\underline{L}$ the set of generalised intersection
points of $(\underline{K},\underline{L}^T)$, which one can arrange to
be a finite set after a Hamiltonian perturbation.

Given $\underline{K}$ and $\underline{L}$ as in Equation
\eqref{eq:gls}, one defines the Floer cochain complex
$CF(\underline{K},\underline{L})$ to be
\[\bigoplus_{\underline{x}\in\underline{K}\cap\underline{L}}\kk\langle\underline{x}\rangle.\]
We explain the construction of the differential, which counts
pseudoholomorphic quilted strips. First let
$\underline{M}=(\underline{K}, \underline{L}^T) =\{M_{i,i+1}\subset
C_i^-\times C_{i+1}\}_{i=0}^{m-1}$ ($m=k+\ell$). The domain of a
pseudoholomorphic quilted strip is $\RR \times [0,1]$, but it has a
number of {\em seams} $\sigma_i=\RR \times \{p_i\} $,
$i=1,\ldots,m-2$.  The seams cut the domain into {\em patches}
$S_i=\RR \times [p_{i-1},p_i] $, $i=1,\ldots,m-1$, where we define
$p_0=0$ and $p_{m-1}=1$. A {\em quilted strip} consists of an
$(m-1)$-tuple $\underline{u}=(u_1,\ldots,u_{m-1})$ of maps $u_i\colon
S_i\to C_i$ for $i=1,\ldots,m-1$, satisfying the boundary and seam
conditions \[u_1(s,0)\in M_{01},\quad
(u_i(s,p_{i}),u_{i+1}(s,p_{i}))\in M_{i,i+1},\quad u_{m-1}(s,1)\in
M_{m-1,m}\] for all $s\in\RR$. A {\em pseudoholomorphic quilted strip}
is a finite-energy quilted strip such that each $u_i$ satisfies the
Floer equation with respect to a choice of translation invariant
time-dependent almost complex structure $J_i$ on $C_i$ and a suitable
choice of Hamiltonian perturbation.

More generally, one can define a {\em quilted Riemann surface}
{\cite[Definition 3.1]{WWpseudoquilt}}, which is a Riemann surface $S$
with strip-like ends, separated into patches $S_i$ by a collection of
oriented, properly embedded, disjoint real analytic arcs. We write
$\sigma_{ij}$ for an oriented seam which has $S_i$ on its right and
$S_j$ on its left; if $b$ is a component of $\partial S$ then we write
$S_{i(b)}$ for the patch containing $b$.

\begin{dfn}\label{dfn:labquilrs}
  A {\em labelled, quilted Riemann surface} is a quilted Riemann
  surface $S$ as in the previous paragraph together with labels: we
  label each patch $S_i$ with a target manifold $Z_i$, each seam
  $\sigma_{ij}$ with a Lagrangian $L_{ij}\subset Z_i^-\times Z_j$ and
  each component $b$ of $\partial S$ with a Lagrangian $L_b\subset
  Z_{i(b)}$. A {\em quilted map} modelled on a labelled, quilted
  Riemann surface is a collection of maps $u_i\colon S_i\to Z_i$
  satisfying the boundary and seam conditions
  \[u_{i(b)}(z)\in L_{i(b)}\mbox{ for }z\in b,\qquad (u_i(z),u_j(z))\in L_{ij}\mbox{ for }z\in\sigma_{ij}.\]
\end{dfn}

One can define higher $A_\infty$-operations for collections of
generalised Lagrangian submanifolds, in a similar way to the
differential, by counting pseudoholomorphic quilted Riemann surfaces
modelled on punctured discs with strip-like ends asymptotic to
generalised intersection points. The underlying moduli spaces of
quilted Riemann surfaces which one uses have natural compactifications
by allowing nodal degenerations. The $A_\infty$-equations follow from
the fact that these compactifications are homeomorphic to associahedra
\cite{M,MWW,MW}.

\begin{rmk}
In fact, with this notion of pseudoholomorphic quilt, it is not
possible to achieve transversality for moduli spaces when some but not
all of the maps $u_i$ are constant. We must allow {\em folded
  perturbations} of the Floer equation. To define these perturbations,
we pick a collection of squares
$(-\delta,\delta)\times(p_i-\epsilon,p_i+\epsilon)$ in $\RR\times
[0,1]$ centred on the seams. For each square, let $\tau_i$ be the
reflection in the seam. Consider the map
$(-\delta,\delta)\times[p_i,p_i+\epsilon]\to C_i^-\times C_{i+1}$
defined by $z\mapsto (u_i(\tau(z)),u_{i+1}(z))$. We require that this
solves Floer's equation in $C_i^-\times C_{i+1}$ for a $z$-dependent
almost complex structure $J$ which has the split form $(-J_i)\oplus
J_{i+1}$ except in a half-ball in
$(-\delta,\delta)\times[p_i,p_i+\epsilon]$ centred at $(0,p_i)$. In
more general quilted domains, discussed above, the seams are required
to be real analytic; this allows one to fold locally along seams and,
hence, make sense of folded perturbations on balls centred on
seams. See also \cite{WWconstant}.
\end{rmk}

In order that the pseudoholomorphic quilt moduli spaces above define
an $A_\infty$-structure, we will require some monotonicity conditions
to hold. First, there is the requirement that each generalised
Lagrangian submanifold is monotone:

\begin{dfn}
Let $Z$ be a compact, monotone, symplectic manifold. A generalized
Lagrangian submanifold $\underline{L}$ of $Z$ is monotone if all the
$A_i$ are monotone with the same monotonicity constant and if all the
$L_{i,i+1}$ are monotone Lagrangian branes with minimal Maslov number
$N_{L_{i,i+1}}\geq 2$. Note that if $Z_{i(b)}$ is exact and
$c_1(Z_{i(b)})=0$ and $L_b$ is an exact, Maslov zero Lagrangian, then
it is $\tau$-monotone for any $\tau$, since both area and index are
zero.
\end{dfn}

To ensure that the Floer differential squares to zero, we also need
control on Maslov 2 disc bubbling:

\begin{dfn}
If $\underline{M}=\{M_{i,i+1}\subset C_i^-\times
C_{i+1}\}_{i=0}^{m-1}$ is a monotone Lagrangian correspondence from
$\{pt\}$ to $\{pt\}$ then define
\[\mathfrak{m}_0(\underline{M})=\sum_{i=0}^{m-1}\mathfrak{m}_0(M_{i,i+1}),\]
where $\mathfrak{m}_0(M_{i,i+1})$ is the count of Maslov 2 discs in
$C_i^-\times C_{i+1}$ with boundary on $M_{i,i+1}$ and one point
constraint on the boundary.
\end{dfn}

\begin{rmk}
If $\underline{K}$ and $\underline{L}$ are monotone generalised
Lagrangian submanifolds of $Z$, and $\partial$ denotes the
differential on $\CF(\underline{K},\underline{L};\kk)$ then
	$\partial^2=\mathfrak{m}_0((\underline{K},\underline{L}^T))\OP{Id}$ (proved as in \cite{OhI,OhII}).
\end{rmk}

Finally, to ensure that counts of pseudoholomorphic quilts defining
the $A_\infty$-structure are finite, and to control bubbling, one
requires a {\em simultaneous} monotonicity condition to hold
{\cite[Definition 3.6]{WWpseudoquilt}}. We summarise these conditions
in the following definition:

\begin{dfn} \label{admissible} 
A (possibly infinite) set of generalized Lagrangian submanifolds $\{
\underline{K}_i\}_{i \in I }$ of $Z$
is called \emph{an admissible set} if there exists a $\tau\geq 0$ such that:
\begin{itemize}
\item each $\underline{K}_i$ is a $\tau$-monotone generalised
  Lagrangian submanifold;
\item for all $i,j\in I$, we have
  $\mathfrak{m}_0(\underline{K}_i,\underline{K}_j^T)=0$; and
\item for all continuous quilted maps modelled on a compact, genus
  zero quilted Riemann surface, where each seam is labelled by a
  component of one of the $\underline{K}_i$, we have
\end{itemize}
\begin{equation}
\label{eq:globmon}\OP{area}(\underline{u})=\tau\OP{ind}(\underline{u}).
\end{equation}
\end{dfn}

Equation \eqref{eq:globmon} is automatically satisfied if the
generalised Lagrangian submanifolds are monotone with the same
monotonicity constant and all the manifolds $Z_i$ are simply-connected
{\cite[Remark 3.7]{WWpseudoquilt}}.

In \cite{MWW}, Mau, Wehrheim and Woodward defined an
$A_\infty$-category $\fF^{\#}(Z;\kk)$ which they call the {\em
  extended Fukaya category}, whose objects are given by an admissible
set of generalised Lagrangian submanifolds of $Z$.

Given a monotone Lagrangian correspondence $L\subset Y^{-}\times Z$
with a brane structure, {\cite[Theorem 1.1]{MWW}} tells us that there
is an $A_\infty$-functor
\begin{gather*}
\Phi(L)\colon\fF^{\#}(Y;\kk)\to\fF^{\#}(Z;\kk)\\
\Phi(L)(L_{01},\ldots,L_{k-1,k})=(L_{01},\ldots,L_{k-1,k},L)
\end{gather*}
if one can ensure that this assignment preserves the requirements of
admissibility as given in Definition \ref{admissible}. Moreover, the
assignment $\Phi\colon\fF(Y^{-}\times Z;\kk)\to\nufun(\fF^{\#}(Y;\kk),\fF^{\#}(Z;\kk))$ is an
$A_\infty$-functor.

Below, whenever we describe a general result about the functors
$\Phi(L)$, we implicitly assume that the functor preserves
admissibility (a condition that we check when we make an explicit use
of these functors).

\begin{dfn}
Let $L\subset Z_1^{-}\times Z_2$ and $M\subset Z_2^{-}\times Z_3$ be
Lagrangian correspondences. We say the pair $(L,M)$ is composable if:
\begin{itemize}
\item The intersection
\[L\times_{Z_2}M:=(L\times M)\cap \left(Z_1^{-}\times\Delta_{Z_2}^{-}\times Z_3\right)\]
is transverse.
\item The projection $\OP{pr}_{Z_1^{-}\times Z_3}\colon Z_1^{-}\times
  Z_2\times Z_2^{-}\times Z_3\to Z_1^{-}\times Z_3$ restricted to
  $L\times_{Z_2}M$ is an embedding.
\end{itemize}
we define $L\circ M=\OP{pr}_{Z_1^{-}\times Z_3}(L\times_{Z_2}M)$ and
we call this the {\em geometric composition} of $L$ and $M$.
\end{dfn}

\begin{thm}[{\cite[Theorem 1.2]{MWW}}]
If $Z_1,Z_2,Z_3$ are monotone symplectic manifolds with the same
monotonicity constants and $L\subset Z^{-}_1\times Z_2$ and $M\subset
Z_2^{-}\times Z_3$ are composable monotone generalised Lagrangian
correspondences with brane structures then
\[\Phi(L)\circ\Phi(M)\simeq\Phi(L\circ M)\]
where $\simeq$ denotes quasi-isomorphism of $A_\infty$-functors.
\end{thm}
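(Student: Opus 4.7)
The plan is to exhibit a distinguished $A_\infty$-natural transformation from $\Phi(L)\circ\Phi(M)$ to $\Phi(L\circ M)$ and then prove it is a quasi-isomorphism by a strip-shrinking argument. Both sides are $A_\infty$-functors $\fF^{\#}(Z_1;\kk)\to\fF^{\#}(Z_3;\kk)$, so by the Yoneda lemma it suffices to construct a pre-natural transformation whose component on each generalised Lagrangian submanifold $\underline{K}$ of $Z_1$ is a quasi-isomorphism of chain complexes, and then to propagate this to higher components of the pre-natural transformation using quilted moduli with additional boundary marked points.

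The main ingredient is a one-parameter family of quilted domains, parametrised by a width $\delta\in[0,\delta_0]$ for an additional $Z_2$-patch. For $\delta>0$ the domain is the quilted strip used to define the left-hand Floer complex (a strip with an interior $Z_2$-patch of width $\delta$, sandwiched between $M$- and $L$-seams); for $\delta=0$ the two seams collide into a single seam labelled by $L\circ M$, producing the domain used to define the right-hand Floer complex. Counting rigid pseudoholomorphic quilts over this family (imposing $\delta=\delta_0$ at the negative end and $\delta=0$ at the positive end, with folded perturbations near the seams) defines a chain map
\[T_{\underline{K},\underline{K}'}\colon\CF\bigl(\Phi(L)\Phi(M)(\underline{K}),\Phi(L)\Phi(M)(\underline{K}')\bigr)\to\CF\bigl(\Phi(L\circ M)(\underline{K}),\Phi(L\circ M)(\underline{K}')\bigr).\]
The higher components of the pre-natural transformation are obtained by the analogous construction over moduli of quilted discs with $d$ boundary inputs, with the $\delta$-parameter inserted on the output strip. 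Standard codimension-$1$ boundary analysis of these parametrised moduli spaces (degenerations at $\delta=0$, at $\delta=\delta_0$, and nodal Floer breakings) yields the $A_\infty$-natural transformation equation.

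It remains to show the resulting natural transformation is a quasi-isomorphism. The idea is to run a continuation-type argument within the family: solutions with $\delta>0$ bounded away from zero give the identity-like endomorphism on the source complex, while as $\delta\to 0$ composability of $(L,M)$ and the strip-shrinking theorem of Wehrheim--Woodward \cite{WWfunctoriality} and Lekili--Lipyanskiy \cite{LekiliLipyanskiy} identify the corresponding moduli spaces bijectively with those computing the target complex. This gives a filtration by action whose associated graded is an isomorphism, and hence $T$ itself is a quasi-isomorphism.

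The hard part is establishing the strip-shrinking statement in the required generality, because the usual compactness theory breaks down as $\delta\to 0$: the $Z_2$-patch degenerates to a seam, and one must exclude sphere and disc bubbling along this limit seam and prove a gluing theorem that reconstructs, from each rigid $(L\circ M)$-quilt, a unique family of $\delta$-quilts for small $\delta>0$. The transversality of the fibre product $L\times_{Z_2}M$ and the embedding condition in the definition of composability are precisely what is needed to run elliptic regularity and an implicit function theorem argument across $\delta=0$; the shared monotonicity constant of the $Z_i$ and the admissibility hypothesis ensure that no Maslov $\leq 2$ bubbles appear and that energies remain uniformly bounded over the family.
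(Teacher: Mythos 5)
This statement is quoted verbatim from Mau--Wehrheim--Woodward (\cite[Theorem 1.2]{MWW}); the paper does not reprove it, but the surrounding discussion records the mechanism it actually relies on: for a test object $N$ the Y-map $\CF(L\circ M,N)\to\CF((L,M),N)$ of \cite{LekiliLipyanskiy} is chain homotopic, by deforming the quilt domain, to $\mu^2(\eta,-)$ for a Floer cocycle $\eta\in\CF((L,M),L\circ M)$, and the inverse Y-map to $\mu^2(-,\eta')$; the cocycles $\eta,\eta'$ then furnish the quasi-isomorphism. Your $\delta$-family of quilted domains interpolating between the wide-$Z_2$-patch strip and the collided seam labelled $L\circ M$ is exactly the geometric input behind both the Y-map and the cocycle $\eta$, so your overall route is the same as the one in the literature the paper cites.

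Two points need repair. First, a structural one: with the paper's conventions a pre-natural transformation $T\in\hom_{\nufun}(F,G)$ has zeroth component $T^0_{\underline{K}}\in\hom(F\underline{K},G\underline{K})$, i.e.\ a Floer \emph{cocycle} for each object, not a chain map $\CF(F\underline{K},F\underline{K}')\to\CF(G\underline{K},G\underline{K}')$ as you write; a map of that shape is not a component of a natural transformation and cannot exist functorially unless you already have the quasi-isomorphism in hand. The correct output of your parametrised moduli space with a single outgoing strip-like end is precisely the cocycle $\eta_{\underline{K}}\in\CF(\Phi(L)\Phi(M)(\underline{K}),\Phi(L\circ M)(\underline{K}))$, and the higher components $T^k$ come from the quilted discs you describe. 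Second, and more seriously, the compactness statement across $\delta=0$ is not merely ``exclude sphere and disc bubbles'': the degeneration specific to strip-shrinking is the \emph{figure-eight bubble} forming along the colliding seams, which was the well-known gap in the original Wehrheim--Woodward argument; in the monotone setting it is excluded by index bounds, but only after one verifies an additional monotonicity condition for \emph{annuli} in $Z_2^-\times Z_3$ with boundary on $L\times(L\circ M)$ and $M$ (see the Corrigendum to \cite{LekiliLipyanskiy} and Remark \ref{rmk:yend-mon}). This hypothesis is not implied by the shared monotonicity constants and admissibility alone --- the paper has to check it by hand in Lemma \ref{lma:monotone} for its main application --- so your claim that ``no Maslov $\leq 2$ bubbles appear'' as a consequence of the stated hypotheses elides the one genuinely delicate verification in the proof.
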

Mostly, we will use this result only in the case $Z_1=\{pt\}$, where
it takes the slightly simpler form
\[(L,M)\simeq L\circ M,\]
where $\simeq$ denotes a quasi-isomorphism in $\fF^{\#}(Z_3;\kk)$. As
explained in {\cite[Proposition 7.2.5]{MWW}}, this simpler result is a
reinterpretation of the cohomology-level geometric composition theorem
\cite{LekiliLipyanskiy,WWcomposition}: given a test object $N$, the
quasi-iso\-mor\-ph\-ism (Y-map)
\[\CF(L\circ M,N) \to \CF((L,M),N)\]
constructed in \cite{LekiliLipyanskiy} is chain homotopic, by a
deformation of the quilt domain, to a map of the form $\mu^2(\eta,-)$
for a Floer cocycle $\eta\in \CF((L,M),L\circ M)$. Similarly, the
inverse Y-map has the form $\mu^2(-,\eta')$ for a cocycle
$\eta'\in\CF(L\circ M,(L,M))$. The cocycles $\eta$ and $\eta'$ provide
the desired quasi-isomorphism.

\begin{rmk}\label{rmk:yend-mon}
We remark that to define the cocycle $\eta$ using the technology of
\cite{LekiliLipyanskiy}, one must first establish monotonicity in the
sense of Equation \eqref{eq:globmon} for annuli in $Z_2^-\times Z_3$
with boundary on $L\times (L\circ M)$ and $M$
{\cite[Corrigendum]{LekiliLipyanskiy}}. For example, this is automatic
if the image of $\pi_1(L\times (L\circ M))$ in $\pi_1(Z_2^-\times
Z_3)$ is torsion. Later (Section \ref{lma:monotone}) we will see an
example where monotonicity holds for different reasons.
\end{rmk}

Only in Corollary \ref{cor:nolagorb} will we use the geometric
composition theorem in the more general situation.

\begin{exm}
If $\Delta\subset X^{-}\times X$ denotes the diagonal Lagrangian then
the associated functor $\Phi(\Delta)$ is quasi-isomorphic to the
identity functor. Moreover, recall that $\Phi$ is itself functorial,
so we get a map on morphisms
\[\Phi^1\colon\hom(\Delta,\Delta)\to\hom(\OP{id},\OP{id}).\]
The self-morphisms of the identity functor are the pre-natural
transformations $(T^0,T^1,\ldots)$, where $T^0$ in particular assigns
to each element $L$ a morphism $T^0_L\in\hom(L,L)$. The composition of
$\Phi^1$ with this gives a map $\hom(\Delta,\Delta)\to\hom(L,L)$ for
each object $L$. On cohomology this map is the closed-open map
$CO^0\colon \QH(X;\kk)\to \HF(L,L;\kk)$ {\cite[Remark
    7.3]{WWfunctoriality}}.
\end{exm}

\begin{exm}\label{exm:projfunc}
If $L_1$ and $L_2$ are monotone Lagrangian branes then the functor
$\Phi(L_1\times L_2)$ is quasi-isomorphic to a {\em projection
  functor} $\mathcal{I}_{L_1,L_2}$ which acts in the following way on
other monotone Lagrangian branes:
\[\mathcal{I}_{L_1,L_2}(L)=\CF(L_1,L)\otimes L_2.\]
See {\cite[Lem. 7.4]{AbouzaidSmith}} for a proof of this fact and
{\cite[Section 3c]{Seidel}} for the definition of the tensor product
of an object and a cochain complex.
\end{exm}

\begin{rmk}\label{rmk:projfunc}
In Example \ref{exm:projfunc}, $\CF(L_1,L)\otimes L_2$ is the tensor
product of an object $L_2$ and a cochain complex $\CF(L_1,L)$. We note
that this contains $L_2$ as a direct summand provided $\HF(L_1,L)\neq
0$. To see this, suppose that $\eta\in\CF(L_1,L)$ is a closed element
not contained in the image of the differential. Let $S$ be a subspace
of $\CF(L_1,L)$ which (a) contains the image of the differential, and
(b) is complementary to the span of $\eta$. Since this contains the
image of the differential, it is a subcomplex. The splitting
$\CF(L_1,L)=S\oplus\langle\eta\rangle$ allows us to define an
idempotent chain map $\CF(L_1,L)\to\CF(L_1,L)$ whose image is
$\langle\eta\rangle$.  This gives an idempotent summand of
$\CF(L_1,L)\otimes L_2$ quasi-isomorphic to $L_2$.
\end{rmk}

One important consequence of the theory is:

\begin{cor}
  To prove split-generation of $D^{\pi}\fF(X;\kk)$ by a collection of
  Lagrangians $L_i$ it suffices to prove that $\Delta$ is
  split-generated in $D^{\pi}\fF(X^{-}\times X;\kk)$ by product
  Lagrangians $L_i\times L_j$.
\end{cor}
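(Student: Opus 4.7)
The plan is to transfer the split-generation statement from the product Fukaya category of $X^{-}\times X$ over to $\fF(X;\kk)$ by means of the quilted functor $\Phi$ and the functoriality of $\Phi$ on morphisms.

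First, since $\Phi\colon\fF(X^{-}\times X;\kk)\to\nufun(\fF^{\#}(X;\kk),\fF^{\#}(X;\kk))$ is an $A_\infty$-functor, it extends to a triangulated $A_\infty$-functor on twisted complexes and respects split-summands. If $\Delta$ is split-generated by the product Lagrangians $L_i\times L_j$ in $D^{\pi}\fF(X^{-}\times X;\kk)$, then $\Phi(\Delta)$ is split-generated by the functors $\Phi(L_i\times L_j)$ in $D^{\pi}\nufun(\fF^{\#}(X;\kk),\fF^{\#}(X;\kk))$. By the geometric composition theorem, $\Phi(\Delta)$ is quasi-isomorphic to the identity functor, while by Example \ref{exm:projfunc} each $\Phi(L_i\times L_j)$ is quasi-isomorphic to the projection functor $\mathcal{I}_{L_i,L_j}$ sending $L\mapsto \CF(L_i,L)\otimes L_j$. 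Thus we obtain, at the level of functor categories, a twisted complex built out of the $\mathcal{I}_{L_i,L_j}$ whose split-closure contains the identity functor.

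Next, I would apply Lemma \ref{lma:twcpxfunctors} to evaluate this twisted complex of functors on an arbitrary object $L\in\fF(X;\kk)$. The quasi-isomorphism of twisted complexes of functors descends to a quasi-isomorphism of the corresponding twisted complexes of objects, so $L=\Phi(\Delta)(L)$ appears as a split-summand of a twisted complex built out of objects of the form $\mathcal{I}_{L_i,L_j}(L)=\CF(L_i,L)\otimes L_j$. As observed in Remark \ref{rmk:projfunc}, each such object is quasi-isomorphic to a direct sum of shifts of $L_j$, so it lies in the split-closure of the collection $\{L_i\}$. Since split-closures are closed under taking cones and summands, $L$ itself lies in the split-closure of $\{L_i\}$, which gives the desired split-generation of $D^{\pi}\fF(X;\kk)$.

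The only mildly delicate point is the passage from a quasi-isomorphism of twisted complexes in $\nufun(\fF^{\#}(X;\kk),\fF^{\#}(X;\kk))$ to a quasi-isomorphism of their evaluations on $L$; this is precisely what Lemma \ref{lma:twcpxfunctors} is designed to provide, and it uses strict unitality which is available here. One should also be mindful that the functors $\Phi(L_i\times L_j)$ are only defined on admissible subcategories, but since we are testing split-generation of a single object $L$ at a time we can work within an ambient admissible family containing $L$ together with all the relevant $L_i$, $L_j$ and their products.
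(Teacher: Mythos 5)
Your argument is correct and follows essentially the same route as the paper: apply the $A_\infty$-functor $\Phi$ so that the identity functor $\Phi(\Delta)$ is split-generated by the projection functors $\Phi(L_i\times L_j)$, then evaluate on an arbitrary object via Lemma \ref{lma:twcpxfunctors} to exhibit it as a split-summand of a complex built from $\CF(L_i,L)\otimes L_j$, each of which lies in the subcategory split-generated by $L_j$. Your additional remarks on strict unitality and admissibility are sensible refinements of the same proof.
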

\begin{proof}
  If this holds then, upon applying the functor $\Phi$, we see that
  the identity functor is split-generated by the functors
  $\Phi(L_i\times L_j)$. Therefore, by Lemma \ref{lma:twcpxfunctors},
  $P$ is quasi-isomorphic to an object split-generated by the objects
  $\Phi(L_i\times L_j)(P)=\CF(L_i,P)\otimes L_j$, which is clearly in
  the subcategory split-generated by $L_j$.
\end{proof}

\subsection{Flat line bundles}\label{sct:locsysquilt}

Let $\kk$ be a field. A {\em flat $\kk$-line bundle} on a Lagrangian
$L$ is a rank 1 $\kk$-vector bundle over $L$ equipped with parallel
transport maps along paths which are independent of the path up to
homotopy relative to its endpoints. We will drop the $\kk$ from the
notation in what follows.

The definition of the Fukaya category $\fF(Z;\kk)$ can be extended so
that its objects are pairs $(L,E)$ of Lagrangians with flat line
bundles\footnote{One could also use higher rank local systems, as in
  \cite{AbouzaidNearby}, but this can cause issues in the presence of
  Maslov 2 discs, as exploited in the work of Damian
  \cite{Damian}; see \cite{Konstantinov} for a fuller discussion of
  these issues.}. We explain how to define an extended Fukaya
category of generalised Lagrangian submanifolds equipped with flat
line bundles.

\begin{dfn}
  Let $\underline{L}=\{L_{i,i+1}\}_{i=0}^{k-1}$ be a generalised
  Lagrangian submanifold of $Z$ and let $\kk$ be a field. A flat line
  bundle on $\underline{L}$ is a $k$-tuple $\{E_{i,i+1}\}_{i=0}^{k-1}$
  of flat line bundles $E_{i,i+1}$ on $L_{i,i+1}$. Given a flat line
  bundle $\underline{E}$ on $\underline{L}$, we define
  $\underline{E}^T$ to be the flat line bundle on $\underline{L}^T$
  given by $E^\vee_{i,i+1}$ on $L^-_{i,i+1}$ for $i=k-1,\ldots,1$.
\end{dfn}

If $\underline{E}$ is a flat line bundle on $\underline{K}$ then we
get a flat line bundle
\[E_{01}\boxtimes\cdots\boxtimes E_{k-1,k}:=\left(p_{01}^*E_{01}\right)\otimes\cdots\otimes\left(p_{k-1,k}^*E_{k-1,k}\right)\]
on $K_{01}\times\cdots\times K_{k-1,k}$, where $p_{i,i+1}$ denotes the
projection to the factor $K_{i,i+1}$. Given flat line bundles
$\underline{E}$ and $\underline{F}$ on $\underline{K}$ and
$\underline{L}$ respectively, we get a bundle
\[\underline{E}\boxtimes\underline{F}^\vee:=E_{01}\boxtimes\cdots\boxtimes E_{k-1,k}\boxtimes F^\vee_{\ell-1,\ell}\boxtimes\cdots\boxtimes F^\vee_{0,1}\]
on $K_{01}\times\cdots\times K_{k-1,k}\times
L^-_{\ell-1,\ell}\times\cdots\times L^-_{01}$. Given a generalised
intersection point $\underline{x}\in\underline{K}\cap\underline{L}$,
the point
$(x_0,x_1,x_1,x_2,\ldots,x_{k+\ell-1},x_{k+\ell-1},x_{k+\ell})$ lives
in $K_{01}\times\cdots\times K_{k-1,k}\times
L^-_{\ell-1,\ell}\times\cdots\times L^-_{01}$ and we can take the
fibre of $\underline{E}\boxtimes\underline{F}^T$ at this point. We
write $\underline{E}\boxtimes\underline{F}^T(\underline{x})$ for this
vector space.

\begin{dfn}
Given $(\underline{K},\underline{E})$ and
$(\underline{L},\underline{F})$, generalised Lagrangian
correspondences equipped with flat line bundles, define
\[\hom((\underline{K},\underline{E}),(\underline{L},\underline{F}))=\bigoplus_{\underline{x}\in\underline{K}\cap\underline{L}}\underline{E}\otimes\underline{F}^T(\underline{x}).\]
\end{dfn}

We can define a differential on this morphism space by counting
pseudoholomorphic quilted strips as before. Each quilted strip
$\underline{u}=(u_1,\ldots,u_{k+\ell-1})$, asymptotic at $-\infty$ to
$\underline{x}$ and at $+\infty$ to $\underline{y}$, defines a path
\[(u_1(0,t),u_1(p_1,t),u_2(p_1,t),u_2(p_2,t),\ldots,u_{k+\ell-1}(p_{k+\ell-1},t),u_{k+\ell}(1,t))\]
in $K_{01}\times\cdots\times K_{k-1,k}\times
L^-_{\ell-1,\ell}\times\cdots\times L^-_{01}$ and, hence, a monodromy
map
\[T_{\underline{u}}\colon\underline{E}\otimes\underline{F}^T(\underline{x})\to\underline{E}\otimes\underline{F}^T(\underline{y}).\]
The contribution of the quilted strip $\underline{u}$ to the
differential is precisely this linear map.

Higher $A_\infty$-products are handled similarly. If a seam $\sigma$
connects an incoming and an outgoing end then the parallel transport
along $\sigma$ is used to define the contribution of the quilt in the
same way as in the definition of the differential. There is an
additional complication when a seam connects two incoming strip-like
ends (not necessarily distinct) or an outgoing end to itself.

In the case of the incoming ends, this is resolved as follows. Suppose
that we have a quilt with several incoming strip-like ends, two of
which ($p$ and $q$) are labelled with Floer cochain groups
$\hom((\underline{K},\underline{E}),(\underline{L},\underline{F}))$
and
$\hom((\underline{K}',\underline{E}'),(\underline{L}',\underline{F}'))$. Suppose
moreover that there is a seam whose ends connect these punctures. This
seam will be labelled with $(L,E)$ at the $p$-end and $(L^-,E^\vee)$
at the $q$-end. Suppose that $x$ is the asymptote at the $p$-end and
$y$ is the asymptote at the $q$-end. The contribution of this quilt
$A_\infty$-operation will be a linear map of the form \[Q\colon
H_1\otimes\hom((\underline{K},\underline{E}),(\underline{L},\underline{F})\otimes
H_2\otimes\hom((\underline{K}',\underline{E}'),(\underline{L}',\underline{F}'))\otimes
H_3\to H_4\] where the vector spaces $H_i$ are tensor products of
morphism spaces. The tensor product on the left-hand side includes
$E(x)\otimes E^\vee(y)$. Parallel transport along the seam allows us
to identify this with $E(x)\otimes E^\vee(x)$, which has a canonical
contraction $E(x)\boxtimes E^\vee(x)\to\kk$. We require that the map
$Q$ factors through this contraction.

In the case of a seam connecting an outgoing end to itself, we make a
similar argument using the canonical map $\kk\to E\otimes E^\vee$
which sends $1$ to the identity. This is sufficient to determine all
quilt contributions.

\begin{dfn}
Suppose that $(K\subset Y,L\subset Y^{-}\times Z)$ is a composable
generalised Lagrangian correspondence. Let $K$ and $L$ be equipped
with flat line bundles, $E$ and $F$. Note that there is a natural map
\[p\colon K\times_YL\to K\times L\]
as $K\times_YL$ includes into $L$ and admits a projection to $K$. Let
\[E\boxtimes F:=\left(\OP{pr}_K^*E\right)\otimes\left(\OP{pr}_L^*F\right).\]
We can define a flat line bundle $E\circ F$ on $K\circ L$ by
$p^*\left(E\boxtimes F\right)$.
\end{dfn}

As before, it is easy to associate to any pair $(L,F)$ comprising a
Lagrangian correspondence with a flat line bundle, an
$A_\infty$-functor $\Phi(L,F)$ between these enlarged categories. The
proof of the geometric composition theorem generalises readily to
show:
\[\Phi(L,F)(K,E)=(K\circ L,E\circ F).\]

\subsection{Non-compact Lagrangians}\label{sct:wrappedquilt}

Let $(W,\theta)$ be a Liouville manifold with $2c_1(W)=0$. Suppose
there is a compact Liouville subdomain $W^{in}$ whose complement is
isomorphic to
\[\left([1,\infty)\times \partial W^{in},r\left(\theta|_{\partial W^{in}}\right)\right)\]
where $r$ is the coordinate on $[1,\infty)$. The wrapped Fukaya
category of $W$ is an $A_\infty$-category whose objects are exact,
Maslov zero, Lagrangian submanifolds, equipped with orientations,
grading, relative spin structures and functions $f_L$ such that
$\theta|_L=df_L$. Additionally, and importantly, we require that
outside a compact set, $\theta|_L=0$. This constrains the behaviour
of $L$ at infinity: it must have the form $[R,\infty)\times \Lambda$
for some Legendrian submanifold $\Lambda\subset\partial W^{in}$.

We will work with the model for the wrapped Fukaya category explained
in \cite{AbouzaidGenerationCriterion} (an alternative model appears in \cite{AbouzaidSeidel}, which is known to agree with the model in \cite{AbouzaidGenerationCriterion}). Consider the class of
Hamiltonian functions $\mathcal{H}(W)$ which agree with $r^2$ outside
a compact set. Given two Lagrangians $L_1,L_2$ and a Hamiltonian
$H\in\mathcal{H}(W)$, an $X_H$-chord is an integral curve $x$ of the
Hamiltonian vector field $X_H$ with $x(0)\in L_1$ and $x(1)\in
L_2$. Define $\CW(L_1,L_2,H;\kk)$ to be the $\kk$-vector space
generated by the chords. The $A_\infty$-operations are defined using
moduli spaces of solutions to Floer's equation in a standard
way. There is a subtlety: for example, the natural composition is
$\CW(L_2,L_3,H;\kk)\otimes \CW(L_1,L_2,H;\kk)\to \CW(L_1,L_3,2H;\kk)$;
to define an $A_\infty$-structure on $\CW(\cdot,\cdot,H;\kk)$ itself
requires a careful choice of Hamiltonian perturbations depending on
the domain and the use of the Liouville flow to ``rescale'' from $kH$
to $H$ (see \cite{AbouzaidGenerationCriterion}).

We now fix:
\begin{itemize}
\item a Liouville manifold $(W,\theta)$ with $2c_1(W)=0$;
\item a simply-connected, compact, monotone symplectic manifold $Z$
  with monotonicity constant $\tau$; and
\item a compact, $\tau$-monotone Lagrangian brane $C\subset W^-\times Z$.
\end{itemize}
We define $\fF_C(Z;\kk)$, an extended category of generalised
Lagrangians in $Z$ having a very special form:

\begin{dfn}[Objects of $\fF_C(Z;\kk)$]\label{dfn:objects}
The objects of $\fF_C(Z;\kk)$ come in two flavours:
\begin{enumerate}
\item[(A)] compact $\tau$-monotone Lagrangian branes $M\subset Z$.
\item[(B)] generalised Lagrangian correspondences $(L,C)$ where
  $L\subset W$ is an exact, Maslov zero Lagrangian brane in $\wW(W)$.
\end{enumerate}
We also require the objects to form an admissible set (Definition
\ref{admissible}).
\end{dfn}

\begin{rmk}\label{rmk:admissibility}
  Checking admissibility when there is non-trivial fundamental group
  can be difficult. However, in the situation of Definition
  \ref{dfn:objects}, there is a sequence of modifications to a general
  genus zero quilted map which allow us to reduce to checking
  monotonicity for annuli with boundary on $L\times M$ and $C$ where
  $(L,C)$ is an object of type (B) and $M$ is some fixed object of
  type (A) (it does not matter which). These modifications have the
  form described in the following lemma, where we first homotope the
  quilted map to be patchwise-constant in some region and then perform
  an excision to change the map in that region. The lemma guarantees
  that, at each stage in the sequence, the modifications preserve
  monotonicity of the quilted map:
\end{rmk}

\begin{lma}[Excision lemma]\label{lma:excision}
  Let $T_{12}$ and $T_{34}$ be two labelled, quilted Riemann surfaces
  (see Definition \ref{dfn:labquilrs}). Pick two simple closed curves
  $\gamma\subset T_{12}$ and $\gamma'\subset T_{24}$ in their
  interiors intersecting the seams transversally. Let us write
  $T_{12}=T_1\cup_\gamma T_2$ and $T_{34}=T_3\cup_{\gamma'}
  T_4$. Suppose that the quilt and label data along $\gamma$ and
  $\gamma'$ match, so that $T_{13}=T_1\cup T_3$ and $T_{24}=T_2\cup
  T_4$ define new quilted domains. Suppose also that, for each
  $i\in\{1,2,3,4\}$, there is a quilted map $v_i$ modelled on $T_i$
  such that $v_{ij}:=v_i\cup v_j$ defines a continuous quilted map on
  $T_{ij}$ for $ij\in\{12,34,13,24\}$. Then $v_{12}$ satisfies
  Equation \eqref{eq:globmon} if $v_{34},v_{13},v_{24}$ all satisfy
  Equation \eqref{eq:globmon}.
\end{lma}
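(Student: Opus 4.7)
The plan is to reduce the statement to the additivity of area and Fredholm/Maslov index under cutting a quilted map along a simple closed curve transverse to the seams, and then to close by a short arithmetic identity in four variables.

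First I would verify that both $\OP{area}$ and $\OP{ind}$ are additive under these cuts. Patch by patch, $\int u_i^*\omega$ decomposes across any interior curve, so $\OP{area}(v_{ij})=\OP{area}(v_i)+\OP{area}(v_j)$ for each pairing $ij\in\{12,34,13,24\}$ is tautological. For the index, the point is that the index of a quilted map is the Fredholm index of the linearized Cauchy--Riemann operator with its seam and boundary conditions, and cutting along an interior curve $\gamma$ with the ``matching'' intermediate condition splits this operator as a Fredholm sum. Equivalently (since we only need the numerical value), the index is a sum of local contributions --- Chern integrals over each patch plus Maslov contributions along each seam arc and boundary arc --- all of which localize to the pieces $T_i$. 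Hence $\OP{ind}(v_{ij})=\OP{ind}(v_i)+\OP{ind}(v_j)$. The hypothesis that the quilt and label data match along $\gamma$ and $\gamma'$ guarantees that the four cross-glued configurations $T_{12}, T_{34}, T_{13}, T_{24}$ are all legitimate labelled quilted surfaces and that each piece $T_i$ contributes the same index no matter which of these surfaces it sits inside.

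Setting $A_i:=\OP{area}(v_i)$ and $I_i:=\OP{ind}(v_i)$, the three monotonicity hypotheses become
\[A_3+A_4=\tau(I_3+I_4),\qquad A_1+A_3=\tau(I_1+I_3),\qquad A_2+A_4=\tau(I_2+I_4).\]
Adding the last two and subtracting the first yields $A_1+A_2=\tau(I_1+I_2)$, which by additivity is precisely $\OP{area}(v_{12})=\tau\,\OP{ind}(v_{12})$, i.e.\ \eqref{eq:globmon} for $v_{12}$.

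The main obstacle is the careful treatment of index-additivity along $\gamma$ and $\gamma'$, since neither curve is a seam nor a boundary of the ambient quilt and so carries no preassigned Lagrangian-type condition. To set this up cleanly one invokes Fredholm additivity with the matching boundary condition along $\gamma$ (requiring the two sides to agree), or equivalently interprets the index of each piece $v_i$ using the image $v_i(\gamma)$ as an internal boundary datum and checks that the contributions from the two sides cancel upon gluing. The transversality of $\gamma$ and $\gamma'$ with the seams reduces the bookkeeping to a finite, local combinatorial check at the finitely many crossings, so no genuine analytical input beyond the standard additivity of Fredholm indices under linear gluing is required.
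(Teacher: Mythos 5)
Your proposal is correct and matches the paper's (one-line) proof, which simply cites additivity of area together with the excision theorem for the index, referring to Section 2.4 of Lekili--Lipyanskiy for the quilted version; your four-variable arithmetic is exactly what that citation leaves implicit. The only refinement worth noting is that you never need to define $\OP{ind}(v_i)$ for an individual piece (which, as you observe, requires an artificial choice along $\gamma$): the excision theorem gives directly $\OP{ind}(v_{12})+\OP{ind}(v_{34})=\OP{ind}(v_{13})+\OP{ind}(v_{24})$, and combined with genuine additivity of area this yields the same conclusion without that bookkeeping.
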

\begin{proof}
  This is an immediate consequence of additivity for area and the
  excision theorem for index. See {\cite[Section
      2.4]{LekiliLipyanskiy}} for an explanation of excision in the
  quilted context.
\end{proof}

\begin{figure}
\begin{center}
\labellist
\pinlabel $\color{black}{T_2}$ at 110 200
\pinlabel $\color{black}{T_1}$ at 110 40
\pinlabel $\color{black}{T_4}$ at 440 200
\pinlabel $\color{black}{T_3}$ at 440 40
\pinlabel $\color{black}{T_{12}}$ at 5 220
\pinlabel $\color{black}{T_{34}}$ at 545 220
\pinlabel $\color{black}{\gamma}$ at 152 73
\pinlabel $\color{black}{\gamma'}$ at 390 76
\endlabellist
\includegraphics[width=200px]{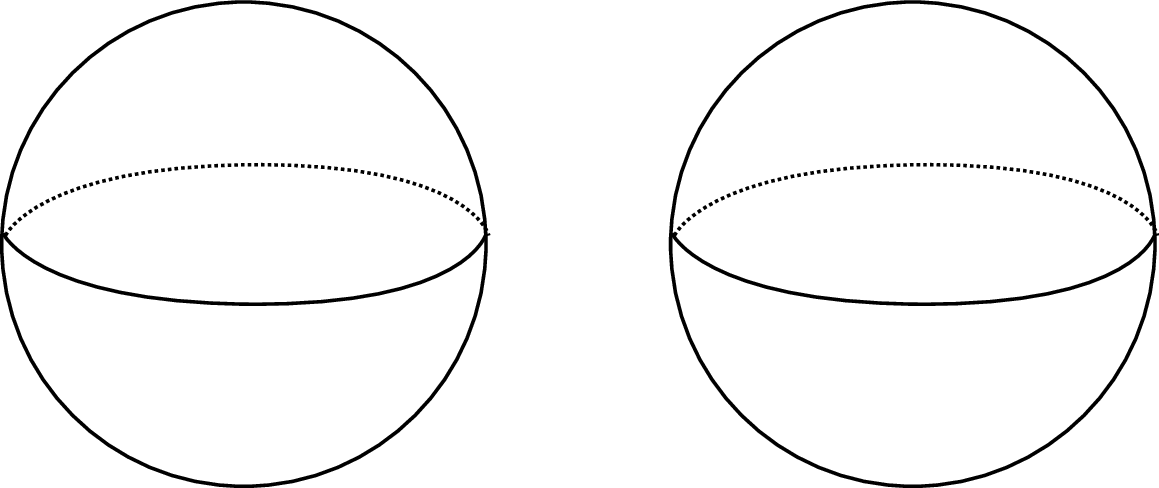}\vspace{0.3cm}
\labellist
\pinlabel $\color{black}{T_3}$ at 110 200
\pinlabel $\color{black}{T_1}$ at 110 40
\pinlabel $\color{black}{T_2}$ at 440 200
\pinlabel $\color{black}{T_4}$ at 440 40
\pinlabel $\color{black}{T_{13}}$ at 5 10
\pinlabel $\color{black}{T_{24}}$ at 545 10
\endlabellist
\includegraphics[width=200px]{excision}
\end{center}
\caption{Excision.}
\label{fig:excision}
\end{figure}

\begin{dfn}[Morphism spaces of $\fF_C(Z;\kk)$]\label{dfn:morphisms}
The morphism spaces come in four flavours. Let $M_i\subset Z$, $i=1,2$ be
objects of type (A) and $(L_i,C)$ be objects of type (B). Define
\begin{align*}
\hom(M_1,M_2)&=\CF(M_1,M_2;\kk)\mbox{ in }Z\\
\hom(M_1,(L_2,C))&=\CW(L_2\times M_1,C;\kk)\mbox{ in }W^{-}\times Z\\
    \hom((L_1,C),M_2)&=\CW(M_2\times L_1,C^{-};\kk)\mbox{ in }Z^{-} \times W\\
    \hom((L_1,C),(L_2,C))&=\CW(L_1\times\Delta_Z\times L_2,C\times C^{-};\kk)\mbox{ in }W\times Z^{-}\times Z\times W^{-}.
\end{align*}
Here, $\CW$ indicates wrapping with respect to a Hamiltonian
$\underline{H}$ defined on the target manifold $A_1\times \cdots\times
A_k$ which has the form $\sum_{i=1}^k H_i$ where $H_i\colon A_i\to\RR$
is in $\mathcal{H}(W)$ whenever $A_i=W$ or $W^{-}$. We will assume
that our Lagrangians have been perturbed and our Hamiltonians chosen
so that all Hamiltonian chords are non-degenerate.
\end{dfn}

\begin{rmk}
Since our correspondence is compact, we never have to take Floer
cohomology between two non-compact Lagrangians (only between a compact
and a non-compact Lagrangian). For this reason, although our Floer
complexes are wrapped, they are finite-dimensional.
\end{rmk}

The $A_\infty$-structure can be defined for $\mathcal{F}_C(Z;\kk)$ in
the same way that Mau, Wehrheim and Woodward \cite{MWW} define the extended
Fukaya category in their setting. For example, Figure \ref{fig:quilt2}
illustrates a quilt contributing to a $\mu^3$-product in the category
(in fact, the $\mu^3$-product is defined by counting pseudoholomorphic
quilts modelled on this picture with a suitably chosen family of
perturbation data, as explained in \cite{MWW}).

\begin{figure}
\begin{center}
\labellist
\pinlabel $\color{black}{Z}$ at 150 120
\pinlabel $\color{black}{W}$ at 175 152
\pinlabel $\color{black}{W}$ at 154 80
\pinlabel $\color{black}{W}$ at 104 76
\pinlabel $\color{black}{W}$ at 110 153
\pinlabel $\color{black}{C}$ at 80 189
\pinlabel $\color{black}{C}$ at 70 34
\pinlabel $\color{black}{L}$ at 180 180
\pinlabel $\color{black}{L}$ at 184 76
\pinlabel $\color{black}{L}$ at 70 73
\pinlabel $\color{black}{L}$ at 70 153
\pinlabel $\color{black}{C}$ at 270 135
\pinlabel $\color{black}{C}$ at 280 115
\endlabellist
\includegraphics[width=240px]{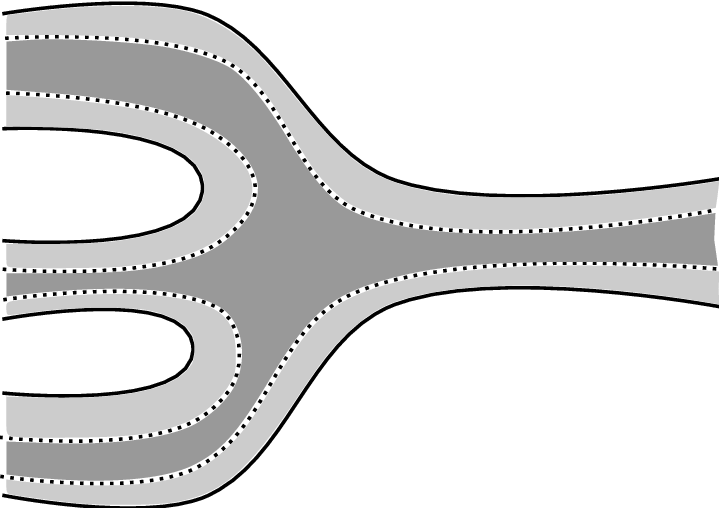}
\end{center}
\caption{A quilt contributing to the $A_\infty$-operation $\mu^3$ on
  $\hom_{\fF_C(Z;\kk)}((L,C),(L,C))$. The light patches map to $W$;
  the dark patch maps to $Z$; the edges map to the Lagrangian
  $L\subset W$; the dotted lines are seams which map to the Lagrangian
  correspondence $C\subset W^{-}\times Z$.}
\label{fig:quilt2}
\end{figure}

There are two points where one must be careful with this construction:
in proving transversality and in proving compactness. We deal with
these considerations in the next two subsections.

\subsubsection{Transversality}

As before, we must make domain-dependent folded perturbations in balls
in the domain centred on the seams. We will need to use non-split
almost complex structures on $Z^-\times W$ or $W^-\times Z$, but we
will require that the almost complex structure is split outside the
compact region $Z^-\times W^{in}$ or $(W^{in})^-\times Z$. This is
enough to prove transversality because the seam condition is given by
a compact Lagrangian correspondence contained in $Z^-\times W^{in}$ or
$(W^{in})^-\times Z$, so the pseudoholomorphic curve must pass through
this region.

\subsubsection{Compactness}

To prove compactness, we need to show that patches of
pseudoholomorphic quilts which map to $W$ with given asymptotic
behaviour at the punctures stay in a given compact region of
$W$. Suppose $S_i$ is a patch in a quilted Riemann surface and $u$ is
a pseudoholomorphic quilt for which $u_i\colon S_i\to W$ lands in
$W$. The part of $u_i$ which escapes from $W^{in}$ is a genuine
solution to Floer's equation, as we require our domain-dependent
almost complex structures to be split outside of regions which project
to $W^{in}$ (see the remark on transversality). For fixed asymptotics,
solutions to Floer's equation are forced to remain in a compact region
of $W$, and for fixed input asymptotics, there can only be finitely
many output asymptotics; this is explained in detail for $\mu^1$ in
{\cite[Lem. 2.4 and 2.5]{AbouzaidGenerationCriterion}}.

\subsubsection{$A_\infty$-functor}

Exactly as in \cite{MWW}, the correspondence $C$ can be used to define
an $A_\infty$-functor $\Phi(C)\colon\bB\to\fF_C(Z;\kk)$, defined on
the subcategory $\bB$ of $\wW(L)$ consisting of objects $L$ such that
$(L,C)$ is a brane of type (B) in $\fF_C(Z;\kk)$. This functor acts on
objects by $\Phi(C)(L)=(L,C)$. The component maps in the
$A_\infty$-functor are defined by counts of pseudoholomorphic quilts,
see Figure \ref{fig:quilt1}. Transversality, compactness and
monotonicity issues can be dealt with in the same way as in setting up
the $A_\infty$-category $\fF_C(Z;\kk)$.

If $(L,C)$ is composable and $L\circ C$ is monotone then geometric
composition works as explained in Section \ref{sct:stdquilt}; the
monotonicity condition explained in Remark \ref{rmk:yend-mon} is built
into the assumption of admissibility in Definition \ref{dfn:objects}.

\begin{figure}
\begin{center}
\labellist
\pinlabel $\color{black}{W}$ at 210 130
\pinlabel $\color{black}{Z}$ at 310 130
\pinlabel $\color{black}{L_3}$ at 180 180
\pinlabel $\color{black}{L_0}$ at 184 76
\pinlabel $\color{black}{L_1}$ at 70 73
\pinlabel $\color{black}{L_2}$ at 70 153
\pinlabel $\color{black}{C}$ at 270 130
\endlabellist
\includegraphics[width=240px]{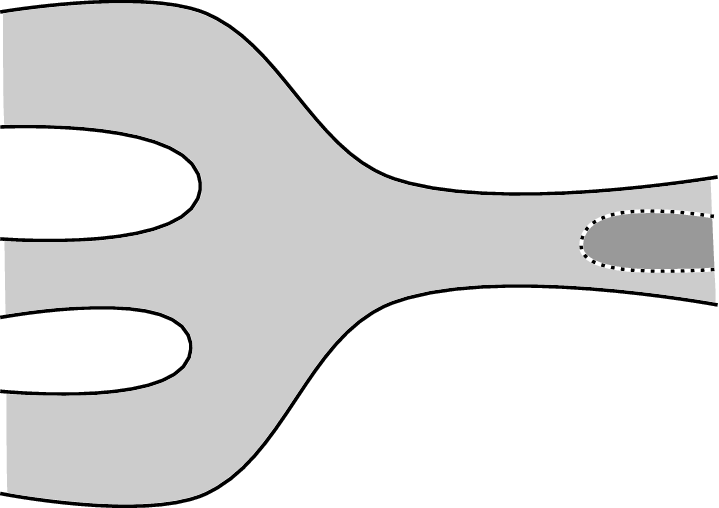}
\end{center}
\caption{A quilt contributing to the $A_\infty$-functor
  $\Phi(C)\colon\wW(W)\to\fF_C(Z;\kk)$, specifically to
  $\Phi(C)^3\colon\hom(L_2,L_3)\otimes\hom(L_1,L_2)\otimes\hom(L_0,L_1)\to\hom_{\widetilde{\fF}(Z)}((L_0,C),(L_3,C))$.}
\label{fig:quilt1}
\end{figure}

We summarise this discussion in the following statement:

\begin{thm}
Let $\bB$ denote the subcategory of $\wW(W)$ consisting of exact
Lagrangian branes $L$ such that $(L,C)$ is a brane of type (B) in
$\fF_C(Z;\kk)$. There is an $A_\infty$-functor
\[\Phi(C)\colon\bB\to\fF_C(Z;\kk)\]
which acts on objects as
\[\Phi(C)(L)=(L,C).\]
Moreover, if $(L,C)$ is a composable correspondence in $\fF_C(Z;\kk)$
then
\[(L,C)\simeq L\circ C.\]
\end{thm}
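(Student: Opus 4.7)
The plan is to follow the blueprint of \cite{MWW}, defining $\Phi(C)$ component-by-component by counting pseudoholomorphic quilts of the type illustrated in Figure \ref{fig:quilt1}. Concretely, the $k$-th component
\[\Phi(C)^k\colon\hom_{\wW(W)}(L_{k-1},L_k)\otimes\cdots\otimes\hom_{\wW(W)}(L_0,L_1)\to\hom_{\fF_C(Z;\kk)}((L_0,C),(L_k,C))\]
counts rigid quilted discs whose domain is a half-plane with $k$ incoming and one outgoing strip-like end: most of the domain is a single patch mapping to $W$ with boundary labels $L_0,\ldots,L_k$, except that a narrow strip near the outgoing end is split by a seam labelled $C$, with a sub-patch mapping to $Z$ whose boundary is the $Z$-component of the outgoing asymptote. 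One then verifies the $A_\infty$-relations for $\Phi(C)$ via the usual boundary analysis of the one-dimensional moduli spaces: codimension-one boundary strata correspond either to bubbling off an $A_\infty$-operation in $\wW(W)$ on the incoming side, or to a quilt/operation in $\fF_C(Z;\kk)$ on the outgoing side. The three technical points (transversality via split-then-folded domain-dependent almost complex structures, $C^0$-compactness in the $W$-direction from the Hamiltonian $H\in\mathcal{H}(W)$ being quadratic at infinity, and admissibility/monotonicity) were already handled exactly as in the setup of $\fF_C(Z;\kk)$ in Definition \ref{dfn:objects} and the subsequent transversality/compactness paragraphs, so the same arguments apply verbatim here. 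In particular, because $C$ is compact and sits in $W^{-}\times Z$, all seam-constrained points of a quilt are trapped in the compact set $W^{in}\times Z$, so escaping ends of $W$-patches are governed by genuine Floer solutions and stay in a fixed compact region.

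For the second statement, I would prove $(L,C)\simeq L\circ C$ by a direct extension of the Y-map argument recalled after {\cite[Proposition 7.2.5]{MWW}} (and due to \cite{LekiliLipyanskiy,WWcomposition}). Assume $(L,C)$ is composable, so $L\circ C\subset Z$ is a smooth compact monotone Lagrangian of type (A) in $\fF_C(Z;\kk)$, with brane data inherited from $L$ and $C$. Let $N$ be an arbitrary test object of $\fF_C(Z;\kk)$; one builds quilted ``Y-domains'' (annuli with an interior seam) producing chain maps
\[\Psi\colon\hom_{\fF_C(Z;\kk)}(L\circ C,N)\to\hom_{\fF_C(Z;\kk)}((L,C),N),\qquad\Psi'\mbox{ going the other way,}\]
and proves these are homotopy inverses by a standard strip-shrinking/strip-stretching deformation of the quilt. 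By the same deformation, $\Psi=\mu^2(\eta,-)$ and $\Psi'=\mu^2(-,\eta')$ for closed elements $\eta\in\hom((L,C),L\circ C)$ and $\eta'\in\hom(L\circ C,(L,C))$, and $\mu^2(\eta',\eta),\mu^2(\eta,\eta')$ are the respective identities up to coboundary. These $\eta,\eta'$ supply the desired quasi-isomorphism.

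The main obstacle is verifying that the Y-map argument still applies in our extended, partially wrapped setting. There are three subtleties. First, compactness of the Y-map moduli spaces relies on the strip-shrinking estimates of \cite{WWcomposition}, which are local near the seam $C$; since $C$ is compact these estimates are unaffected by the non-compactness of $L$. Second, transversality for such quilts requires the same domain-dependent perturbations used in $\fF_C(Z;\kk)$, plus a careful choice of Hamiltonian $\underline{H}$ on $W\times Z^{-}\times Z\times W^{-}$ (respectively on the appropriate product) that is quadratic at infinity in the $W$-factors; this is exactly the class used in Definition \ref{dfn:morphisms}. Third, and crucially, the construction requires the annulus monotonicity of Remark \ref{rmk:yend-mon} to hold for annuli with boundary on $L\times(L\circ C)$ and $C$: this is precisely the condition that was packaged into the admissibility hypothesis in Definition \ref{dfn:objects} and unpacked via the excision lemma (Lemma \ref{lma:excision}) in Remark \ref{rmk:admissibility}. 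Once these three points are settled, the rest of the argument is the standard reinterpretation of {\cite[Proposition 7.2.5]{MWW}}, and the theorem follows. I expect the annulus monotonicity to be the genuinely non-routine step when verifying the hypotheses in concrete examples (as foreshadowed in Section \ref{lma:monotone}), while everything else is essentially bookkeeping.
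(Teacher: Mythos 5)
Your proposal is correct and follows essentially the same route as the paper: the theorem is stated there as a summary of the preceding discussion, which likewise defines $\Phi(C)$ by the quilt counts of Figure \ref{fig:quilt1} following \cite{MWW}, handles transversality and compactness exactly as in the construction of $\fF_C(Z;\kk)$ (split-at-infinity almost complex structures, compactness of $C$ trapping the seam, quadratic Hamiltonians controlling the $W$-patches), and deduces $(L,C)\simeq L\circ C$ from the Y-map argument of \cite{LekiliLipyanskiy,WWcomposition}, with the annulus monotonicity of Remark \ref{rmk:yend-mon} absorbed into the admissibility hypothesis of Definition \ref{dfn:objects}. Your write-up is in fact somewhat more explicit than the paper's about where each technical ingredient enters.
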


\section{Hamiltonian group actions}\label{sct:ham}

\subsection{Moment correspondence functor}\label{sct:mcf}

Let $G$ be a compact Lie group, let $(X,\omega)$ be a simply-connected
monotone symplectic manifold, and suppose that $X$ admits a
Hamiltonian action of $G$ with equivariant moment map $\mu\colon
X\to\mathfrak{g}^*$.  The {\em moment correspondence} is the
Lagrangian submanifold
\[C:=\{(g,v,x,y)\in (T^*G)^{-}\times X^{-}\times X\ :\ v=\mu(gx),\ y=gx\}.\]
Here, we are using left-multiplication to trivialise the cotangent
bundle $T^*G\cong G\times\mathfrak{g}^*$ and write $(g,v)$ for a point
in $G\times\mathfrak{g}^*$.

\begin{lma}
The moment correspondence is a monotone Lagrangian submanifold.
\end{lma}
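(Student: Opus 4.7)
The plan is to parametrize $C$ as the image of an injective smooth map $\phi\colon G\times X\to(T^*G)^-\times X^-\times X$ sending $(g,x)$ to $(g,\mu(gx),x,gx)$, check that $\phi$ is a Lagrangian embedding by a direct computation, and then deduce monotonicity from the exactness of $T^*G$ combined with monotonicity of $X$.

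Since $\dim(G\times X)=\tfrac{1}{2}\dim((T^*G)\times X^-\times X)$, the Lagrangian condition reduces to verifying $\phi^*\Omega=0$ with $\Omega=-\omega_{T^*G}\oplus(-\omega_X)\oplus\omega_X$. Writing $\alpha(g,x)=gx$, $\psi(g,x)=(g,\mu(gx))$, and $\theta$ for the left-invariant Maurer-Cartan form on $G$, the canonical one-form on $T^*G$ pulls back in the left-trivialization as $\psi^*\lambda=\langle\mu\circ\alpha,\theta\rangle$, and the vanishing $\phi^*\Omega=0$ is equivalent to the identity
\[\alpha^*\omega_X-\pi_X^*\omega_X=d\psi^*\lambda\]
on $G\times X$, which follows from the defining relation $d\langle\mu,\xi\rangle=\iota_{\xi_X}\omega_X$, $G$-equivariance of $\mu$, and the Maurer-Cartan equation $d\theta=-\tfrac{1}{2}[\theta,\theta]$.

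For monotonicity, given a smooth disc $u=(u_1,u_2,u_3)\colon(D,\partial D)\to((T^*G)^-\times X^-\times X,C)$, I would write its boundary as $\phi\circ\gamma$ for a loop $\gamma=(g(\cdot),x(\cdot))$ in $G\times X$. After passing to a suitable integer multiple (harmless since monotonicity is a homogeneous condition, and the obstruction in $\pi_1(C)=\pi_1(G)$ is torsion for semisimple $G$, with torus factors treated separately), we may assume $\gamma$ bounds $\tilde u\colon D\to G\times X$. Exactness of $T^*G$ combined with the identity above yields
\[\int u_1^*\omega_{T^*G}=\int_{\tilde u}(\alpha^*\omega_X-\pi_X^*\omega_X),\]
so that gluing $u_2$ to $\pi_X\circ\tilde u$ and $u_3$ to $\alpha\circ\tilde u$ along their common boundaries produces spheres $S_2,S_3\in H_2(X)$ with $\omega(u)=\omega_X(S_3)-\omega_X(S_2)$. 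A parallel analysis for the Maslov index, exploiting $c_1(T^*G)=0$ and the splitting of $TM|_C$ coming from $\phi_*$, gives $\mu(u)=2\langle c_1(X),S_3\rangle-2\langle c_1(X),S_2\rangle$, and monotonicity of $X$ with constant $\tau$ then delivers $\omega(u)=\tfrac{\tau}{2}\mu(u)$.

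The main obstacle is the Maslov computation: while the area identity is a direct consequence of exactness of $T^*G$, matching the Maslov index of $u$ against $C$ with Chern numbers of the assembled spheres requires a careful identification of $\phi_*T(G\times X)$ as a Lagrangian subbundle of $TM|_C$, using $c_1(T^*G)=0$ and naturality of Maslov classes under this ``graph'' construction. Technical subtleties coming from non-trivial $\pi_1(G)$ are handled by working with rational coefficients or passing to integer multiples, since monotonicity is a $\mathbf{Q}$-linear proportionality between the area and Maslov homomorphisms on $\pi_2(M,C)$.
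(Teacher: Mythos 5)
Your overall strategy (show every disc class with boundary on $C$ is accounted for by sphere classes in $X\times X$, then quote monotonicity of $X$) is the same as the paper's, but there is a genuine gap at the pivotal step where you ``may assume $\gamma$ bounds $\tilde u\colon D\to G\times X$''. The obstruction to this is the class $[\gamma]\in\pi_1(C)\cong\pi_1(G)$ (recall $X$ is assumed simply connected here), and your proposed fixes do not cover the general case: $\pi_1(G)$ is finite only for semisimple $G$, and for the groups that matter most in the applications --- $T^n$ for the toric examples and $U(n)$ for the Grassmannian --- it is free of positive rank, so no integer multiple of $\gamma$ becomes nullhomotopic and ``treating torus factors separately'' is left unexplained. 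Working with rational coefficients does not help either, since a nonzero class in $\ZZ^n$ survives rationally. The correct observation, which is exactly the content of the paper's one-line argument, is that $\phi$ induces an \emph{injective} map $\pi_1(C)\to\pi_1\bigl((T^*G)^-\times X^-\times X\bigr)$ (it is essentially the identity on the $G$-factor), so the boundary of \emph{any} disc in the pair is already nullhomotopic in $C$; equivalently, by the relative homotopy exact sequence together with $\pi_2(G)=0$, the map $\pi_2(X\times X)\to\pi_2(T^*G\times X\times X,C)$ is surjective. Without this injectivity your proof simply does not get off the ground for $G=T^n$.

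Two further comments. First, the ``main obstacle'' you identify --- matching the Maslov index of $u$ with Chern numbers of the assembled spheres via a careful identification of $\phi_*T(G\times X)$ inside $T M|_C$ --- evaporates once you phrase the argument homotopy-theoretically: since every class in $\pi_2(M,C)$ is the image of a class in $\pi_2(X\times X)$, its Maslov index is $2c_1$ of that sphere class and its area is $\omega$ of that class, both because the boundary homomorphism and the relevant characteristic classes are natural; no pointwise analysis of Lagrangian subbundles is needed. Second, your verification that $C$ is Lagrangian via the moment map identity and the Maurer--Cartan equation is correct but is not what the paper's lemma is really about --- that part is classical (Weinstein, Guillemin--Sternberg) and the proof in the paper addresses only monotonicity.
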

\begin{proof}
The relative homotopy exact sequence is
\begin{gather*}\cdots\to\pi_2(C)\to\pi_2(T^*G\times X\times X)\to\pi_2(T^*G\times X\times X,C)\to\\
\to\pi_1(C)\to\pi_1(T^*G\times X\times X)\to\cdots\end{gather*} We
know that $\pi_2(G)=0$ and that the map
$\pi_1(G)=\pi_1(C)\to\pi_1(T^*G\times X\times X)$ is induced by the
map $(g,x)\mapsto(g,\mu(gx),x,gx)$ and is therefore injective. This
implies $\pi_2(T^*G\times X\times X,C)$ is a quotient of
$\pi_2(X\times X)$ and monotonicity follows from monotonicity of $X$.
\end{proof}

As the moment correspondence is compact and monotone, the quilt
machinery from Section \ref{sct:wrappedquilt} defines for us an
$A_\infty$-functor
\[\Phi(C)\colon\bB\to\fF_C(X^{-}\times X;\kk)\]
where $\bB$ is the subcategory of $\wW(T^*G;\kk)$ consisting of exact
Lagrangian branes $L$ such that $(L,C)$ is a brane of type (B) in
Definition \ref{dfn:objects}. The following observation is crucial to
our paper.

\begin{lma}\label{lma:diagcomp}
  Let $T^*_1G$ denote the cotangent fibre at the identity element in
  $G$. The generalised correspondence $(T^*_1G,C)$ is composable and
  its geometric composition is the diagonal Lagrangian $\Delta\subset
  X^-\times X$.
\end{lma}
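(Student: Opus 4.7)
The plan is to just compute both the set-theoretic fibered product and verify the two bullet-points (transversality of the fibered intersection, embedding on projection) in the definition of geometric composition, and then note that monotonicity of the composition is automatic.

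First I would work out the set-level intersection. Using the left trivialization $T^*G \cong G \times \mathfrak{g}^*$, the cotangent fibre is $T^*_1 G = \{(1,v) : v \in \mathfrak{g}^*\}$, and the condition defining the fibered product
\[T^*_1 G \times_{T^*G} C = (T^*_1 G \times C)\cap(\Delta_{T^*G}^-\times X^{-}\times X)\]
forces $(g,v)$ to lie in $T^*_1 G$, i.e.\ $g=1$, together with $v=\mu(gx)=\mu(x)$ and $y=gx=x$. Thus the fibered product is parametrised by $X$ via $x\mapsto((1,\mu(x)),(1,\mu(x),x,x))$, and the projection to $X^{-}\times X$ sends this to $(x,x)$, which is obviously an embedding onto the diagonal $\Delta$.

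Second, I would verify transversality of the intersection, i.e.\ that the projection $\pi_C\colon C\to T^*G$ is transverse to the inclusion $T^*_1 G\hookrightarrow T^*G$. Using the parametrisation $(g,x)\mapsto (g,\mu(gx),x,gx)$ of $C$, the differential of $\pi_C$ at $(1,x)$ is
\[(\delta g,\delta x)\mapsto\bigl(\delta g,\,d\mu_x(\delta g\cdot x+\delta x)\bigr)\in\mathfrak{g}\oplus\mathfrak{g}^*\cong T_{(1,\mu(x))}T^*G,\]
where $\delta g\cdot x$ denotes the infinitesimal action. The horizontal direction $\mathfrak{g}$ is hit by varying $\delta g$, while the vertical direction $\mathfrak{g}^*$ is already tangent to $T^*_1 G$. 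Together these span $T_{(1,\mu(x))}T^*G$, giving the required transversality. A dimension count then confirms that the composition has the expected dimension $\dim X=\dim\Delta$.

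The main (minor) obstacle is really just sign/normalisation bookkeeping for the trivialisation of $T^*G$ and care with the orientation of $\Delta_{T^*G}^-$; nothing in the argument is deep. Once the composition is identified with $\Delta$, monotonicity of the resulting Lagrangian is automatic since $\Delta$, being the graph of the identity, is monotone in $X^{-}\times X$ with the same monotonicity constant as $X$, and the diagonal is simply-connected-or-not-but-in-any-case compatible with whatever admissibility conditions were already arranged for the moment correspondence.
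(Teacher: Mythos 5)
Your proof is correct and follows essentially the same route as the paper, which simply records the transverse intersection $(T^*_1G\times X^-\times X)\cap C=\{(1,\mu(x),x,x)\}$ and observes that its projection to $X^-\times X$ is the embedded diagonal. Your explicit transversality check (the image of $d\pi_C$ surjects onto $\mathfrak{g}$ while $T(T^*_1G)$ supplies $\mathfrak{g}^*$) is a detail the paper leaves implicit, and the closing remarks on monotonicity are not part of the lemma itself --- the paper handles admissibility of $(T^*_1G,C)$ separately just afterwards.
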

\begin{proof}
  We have a transverse intersection $(T^*_1G\times X^-\times X)\cap
  C=\{(1,\mu(x),x,x)\}$ and the projection of this to $X^-\times X$ is
  the embedding of the diagonal.
\end{proof}

Since $T_1^*G$ is simply-connected, the object $(T^*_1G,C)$ satisfies
monotonicity for annuli with boundaries on $T^*_1G\times\Delta$ and
$C$ and hence, by Remark \ref{rmk:admissibility}, can be incorporated
as an object of type (B) into $\fF_C(X^-\times X;\kk)$. This implies:

\begin{dfn}[Moment correspondence functor]
  If we restrict $\Phi(C)$ to the subcategory $\langle T^*_1G\rangle$
  generated by $T^*_1G$, then, after a quasi-isomorphism, we can
  assume that it lands in the usual Fukaya category $\fF(X^-\times
  X)$. The derived wrapped Fukaya category $D^b\wW(T^*G)$ is generated
  by this cotangent fibre so, by passing to triangulated closures, we
  obtain a functor
  \[\mathfrak{C}\colon D^b\wW(T^*G)\to D^b\fF(X^-\times X).\]
  We call this the {\em moment correspondence functor}.
\end{dfn}

\subsection{Main results}

We now describe the situation we are interested in analysing.

\begin{setup}\label{stp}
  Let $G$ be a compact Lie group, let $(X,\omega)$ be a monotone
  symplectic manifold, and suppose that $X$ admits a Hamiltonian
  action of $G$ with equivariant moment map $\mu\colon
  X\to\mathfrak{g}^*$. Let $\kk$ be an arbitrary field of
  characteristic $p\geq 0$, where $G$ has no $p$-torsion if $p>0$. We
  will assume that $\mu$ vanishes transversely along $\mu^{-1}(0)$ and
  that $G$ acts freely on $\mu^{-1}(0)$. We will also assume that the
  Lagrangian submanifold
  \begin{equation}\label{eq:graphofzerolevel}M:=\{(x,y)\in X^{-}\times X\ :\ x,y \in \mu^{-1}(0),\ y=gx\mbox{ for some }g\in G\}\end{equation}
  is monotone.
\end{setup}

It is easy to check the following:

\begin{lma}\label{lma:mcf}
  Let $G$ denote the zero-section in $T^*G$. In the situation of Setup
  \ref{stp}, the generalised correspondence $(G,C)$ is composable and
  the geometric composition is $M$.
\end{lma}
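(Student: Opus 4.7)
The plan is to compute the fibre product $G \times_{T^*G} C$ directly, verify the two composability conditions (transverse intersection and embedded projection to $X^{-} \times X$), and then identify the image with $M$. This parallels the proof of Lemma \ref{lma:diagcomp}, but now the intersection has positive dimension rather than being a single point, so we must invoke the assumptions of Setup \ref{stp} to control it.

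First I would identify the fibre product set-theoretically. A point in $G \times_{T^*G} C$ pairs $(g, 0) \in G \subset T^*G$ with $(g', v, x, y) \in C$ subject to $g = g'$ and $v = 0$. The defining conditions of $C$ then force $y = g x$ and $\mu(g x) = 0$. By equivariance of $\mu$, the condition $\mu(g x) = 0$ is equivalent to $x \in \mu^{-1}(0)$, so the fibre product is naturally parametrised by $G \times \mu^{-1}(0)$, with $(g, x)$ mapping to $(x, g x)$ in $X^{-} \times X$.

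For transversality, I would work locally with coordinates $(g, v, g', v', x, y)$ on $T^*G \times (T^*G)^{-} \times X^{-} \times X$: the product $G \times C$ is cut out by $v = 0$, $v' = \mu(g' x)$, $y = g' x$, while $\{\mathrm{pt}\} \times \Delta_{T^*G}^{-} \times X^{-} \times X$ is cut out by $g = g'$, $v = v'$. Writing down tangent vectors at an intersection point and solving the resulting linear system, transversality reduces to the statement that for every $w \in \mathfrak{g}^*$ there exists $\gamma \in T_x X$ with $d\mu_{gx}(g \cdot \gamma) = w$; this is exactly surjectivity of $d\mu$ at $gx \in \mu^{-1}(0)$, which holds because $0$ is a regular value of $\mu$.

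For the embedding condition, the projection $(g, x) \mapsto (x, g x)$ from $G \times \mu^{-1}(0)$ to $X^{-} \times X$ is injective because freeness of the $G$-action on $\mu^{-1}(0)$ forces $g = g'$ whenever $g x = g' x'$ with $x = x'$; the same argument at the infinitesimal level shows the differential is injective. Since $G$ is compact and $\mu^{-1}(0)$ is closed in $X$, the map is proper, so this injective immersion is an embedding. Its image is precisely $\{(x, y) \in X^{-} \times X : x \in \mu^{-1}(0),\ y = g x \text{ for some } g \in G\}$; since $y = g x$ also lies in $\mu^{-1}(0)$ by equivariance, this matches the definition of $M$ in Setup \ref{stp}. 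The only delicate point is the transversality calculation, but it is essentially a repackaging of the regular value hypothesis.
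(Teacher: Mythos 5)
Your proof is correct and is exactly the direct verification intended here: the paper in fact states Lemma \ref{lma:mcf} without proof, treating it as the same routine computation as Lemma \ref{lma:diagcomp}, and your argument supplies the details faithfully. In particular you correctly locate where each hypothesis of Setup \ref{stp} is used --- regularity of $0$ for transversality of the fibre product, freeness of the action on $\mu^{-1}(0)$ for injectivity of the projection (and of its differential), and equivariance of $\mu$ to see that the image is $M$.
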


\begin{lma}\label{lma:monotone}
  If $u\colon S^1\times[0,1]\to \left(T^*G\right)^-\times X^-\times X$
  is an annulus with $u(s,0)\in G\times (G\circ C)$ and $u(s,1)\in C$
  then
  \[\OP{area}(u)=\tau\OP{index}(u),\]
  where $\tau$ is the monotonicity constant for $X$.
\end{lma}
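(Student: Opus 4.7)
The plan is to establish $\mathrm{area}(u) = \tau \cdot \mathrm{index}(u)$ by the standard capping argument, reducing everything to monotonicity of spheres in the ambient and discs in each Lagrangian. First I will verify that the ambient manifold $(T^*G)^- \times X^- \times X$ is $\tau$-monotone: since $\pi_2(G) = 0$ for any compact Lie group, $T^*G$ carries no spherical classes, and $X^- \times X$ inherits $\tau$-monotonicity from $X$. I will also verify that both Lagrangians involved are $\tau$-monotone: the zero-section $G \subset T^*G$ is exact (hence trivially monotone) and $M$ is $\tau$-monotone by Setup \ref{stp}, so $G \times M$ is $\tau$-monotone; while the monotonicity of $C$ is the first lemma of Section \ref{sct:mcf}.

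Next I will cap off the two boundary loops $v_0 := u|_{S^1 \times \{0\}} \subset G \times M$ and $v_1 := u|_{S^1 \times \{1\}} \subset C$. In general these loops need not bound discs in their Lagrangians. To handle this, I will first replace $u$ by its precomposition with a degree-$N$ cover $S^1 \to S^1$, which multiplies both $\mathrm{area}(u)$ and $\mathrm{index}(u)$ by $N$ and therefore does not affect the identity. Noting that $C \cong G \times X$ (so $\pi_1(C) = \pi_1(G)$ since $X$ is simply-connected) and $G \times M \cong G \times G \times \mu^{-1}(0)$ (using the free action of $G$ on $\mu^{-1}(0)$), and that both $\pi_1(G)$ and $\pi_1(\mu^{-1}(0))$ are finitely generated, for sufficiently divisible $N$ the images of the boundary loops in the relevant fundamental groups vanish, so discs $D_0 \subset G \times M$ and $D_1 \subset C$ bounding them exist.

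Gluing $D_0$ and $D_1$ to the $N$-fold cover of $u$ produces a sphere $S$ in $(T^*G)^- \times X^- \times X$. Applying the three monotonicity statements, I obtain $\mathrm{area}(S) = 2\tau \cdot c_1(S)$ and $\mathrm{area}(D_i) = \tau \cdot \mathrm{Maslov}(D_i)$ for $i=0,1$. Combining this with the standard Maslov-Chern relation $2c_1(S) = \mathrm{Maslov}(D_0) + \mathrm{Maslov}(D_1) + N \cdot \mathrm{index}(u)$, and using additivity of area across the three pieces of $S$, I get $N \cdot (\mathrm{area}(u) - \tau \cdot \mathrm{index}(u)) = 0$, and hence the claimed equality.

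The main obstacle is the capping step: verifying that some finite cover of $u$ admits caps in the respective Lagrangians requires care with the boundary loops' images in $\pi_1(G \times M)$ and $\pi_1(C)$. A cleaner alternative I would also consider is to observe directly that the functional $u \mapsto \mathrm{area}(u) - \tau \cdot \mathrm{index}(u)$ is additive under concatenation of annuli, homotopy-invariant provided the boundaries remain in the Lagrangians (areas change by $\int \omega(X_\lambda,\dot\gamma) = 0$ for variation vector fields $X_\lambda$ tangent to a Lagrangian), and vanishes on a generating set of classes (spheres in the ambient, discs in each Lagrangian, and constant annuli at points of the clean intersection $C \cap (G \times M) \cong G \times \mu^{-1}(0)$); each of these vanishings follows from the three monotonicity statements and is independent of capping.
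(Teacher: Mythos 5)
Both of your routes founder on the same point, and it is exactly the point that makes this lemma nontrivial. In your first (capping) argument you claim that after passing to a degree-$N$ cover of the annulus the boundary loops become null-homotopic in $G\times (G\circ C)$ and in $C$ because the relevant fundamental groups are finitely generated. Finite generation is not enough: $\pi_1(G)$ is finitely generated \emph{abelian} and typically has free part (e.g.\ $G=T^n$ or $U(n)$), and multiplying a non-torsion class by $N$ never kills it. Since $C\cong G\times X$ with $X$ simply connected, $\pi_1(C)\cong\pi_1(G)$, and the loop $u(s,1)$ can perfectly well represent a generator of a $\ZZ$-factor, in which case no finite cover of $u$ admits a cap in $C$. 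Your second route hides the same difficulty inside the unproved assertion that spheres, discs on each Lagrangian, and constant annuli at intersection points generate all homotopy classes of annuli with these boundary conditions. That is false for general pairs of intersecting Lagrangians (already for two copies of the zero-section in $T^*S^1$, the once-wrapping annulus lies in none of those classes, since $\pi_2(T^*S^1,S^1)=0$), and in the present situation the classes you have failed to account for are precisely the annuli whose boundary loops wrap non-trivially around $G$.

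What rescues the lemma is the specific topology of the moment correspondence, and this is how the paper argues. Set $K:=C\cap\bigl(G\times (G\circ C)\bigr)\cong G\times\mu^{-1}(0)$. The inclusion $K\hookrightarrow C\cong G\times X$ is surjective on $\pi_1$ because $\pi_1(X)=1$, so $\pi_1(C,K)=1$ and the boundary loop $u(s,1)$ may be homotoped \emph{within $C$} into $K$. After this the $T^*G$-component $w(s,t)$ of $u$ has both boundary circles on the zero-section; since $T^*G$ deformation retracts onto $G$, one can homotope $u$ further so that $w$ is independent of $t$, whereupon the area and index of $u$ reduce to those of the $X^-\times X$-component $z$, an annulus with both boundaries on $G\circ C$. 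Only at this stage does capping work --- using $\pi_1(X^-\times X)=1$, not any covering trick --- and monotonicity then follows from monotonicity of discs on $G\circ C=M$ (assumed in Setup \ref{stp}) and of spheres in $X^-\times X$. Your verification that the ambient manifold and the individual Lagrangians are $\tau$-monotone is correct, but that is the easy part; the passage from those facts to monotonicity of \emph{annuli} is where the content lies, and neither of your arguments supplies it.
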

\begin{proof}
  We write $u(s,t)=(w(s,t),z(s,t))$ where $w(s,t)\in
  \left(T^*G\right)^-$ and $z(s,t)\in X^-\times X$. Let
  $K:=C\cap(G\times (G\circ
  C))=\{(g,0,x,gx)\ :\ x\in\mu^{-1}(0)\}$. Note that the map
  $\pi_1(K)\to\pi_1(C)$ is surjective and that $K\cong G\circ C$ is
  connected, so $\pi_1(C,K)=\{1\}$. This means that the loop $u(s,1)$
  in $C$ is homotopic in $C$ to a loop contained in $K$. Since we are
  only interested in $u$ up to homotopy, we can therefore assume that
  $u(s,1)$ is contained in $K$. Thus $w(s,t)$ is an annulus in $T^*G$
  with boundary on $G$. Since $T^*G$ retracts onto $G$, this is
  homotopic, through annuli in $T^*G$ with boundary on $G$, to the
  annulus $w'(s,t):=w(s,1)$. Therefore $u(s,t)$ is homotopic to an
  annulus $u'(s,t)=(w(s,1),z(s,t))$ with boundary on $G\times (G\circ
  C)$ and $K\subset C$. Note that $z$ is an annulus with boundary on $G\circ C$. The
  area and index of $u'$ agree with the area and index of $z$
  {\cite[Lemma 2.1.2]{WWcomposition}}. The boundary of $z$
  can be capped off in $X^-\times X$ since $\pi_1(X)=\{1\}$. Therefore
  monotonicity for $z$ follows from monotonicity for discs with
  boundary on $G\circ C\subset X^-\times X$.
\end{proof}

This monotonicity result allows us to incorporate $(G,C)$ as an object
of type (B) into the category $\fF_C(X^-\times X;\kk)$. As a
consequence of Lemmas \ref{lma:mcf} and \ref{lma:monotone}, we can
apply the geometric composition theorem in quilted Floer theory to
deduce that
\[\mathfrak{C}(G)\simeq M.\]

\begin{thm}\label{thm:generation}
  In the situation of Setup \ref{stp}, the Lagrangian $M$ in
  $\fF(X^{-}\times X)$ split-generates $\Delta_\alpha$ over $\kk$ if
  and only if $\HF(M,\Delta_\alpha;\kk)\neq 0$.
\end{thm}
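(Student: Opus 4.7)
The plan is to assemble the three main ingredients developed earlier in the paper: the Koszul description of the zero-section in $\wW(T^*G;\kk)$, the fact that $A_\infty$-functors preserve Koszul twisted complexes, and the nilpotence-based generation criterion for Koszul complexes built out of $\Delta$.

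First, by Corollary \ref{cor:grps} applied to $G$ and $\kk$ (whose characteristic is not a torsion prime for $G$), the zero-section $G\subset T^*G$ is quasi-isomorphic in $\wW(T^*G;\kk)$ to a Koszul twisted complex $K$ built out of the cotangent fibre $T^*_1G$, with shifts and edges determined by the generators of the polynomial algebra $H_{-*}(\Omega G;\kk)$. Next, I would apply the moment correspondence functor $\mathfrak{C}\colon D^b\wW(T^*G;\kk)\to D^b\fF(X^-\times X;\kk)$ to this twisted complex. By Lemma \ref{lma:functorkoszul}, the image $\mathfrak{C}(K)$ is itself a Koszul twisted complex in $\Tw(\fF(X^-\times X;\kk))$, and its building block is $\mathfrak{C}(T^*_1G)$. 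By Lemma \ref{lma:diagcomp} together with the geometric composition theorem, $\mathfrak{C}(T^*_1G)\simeq T^*_1G\circ C=\Delta$. Hence $M\simeq\mathfrak{C}(G)\simeq\mathfrak{C}(K)$ is quasi-isomorphic to a Koszul twisted complex built out of $\Delta$ in $\fF(X^-\times X;\kk)$.

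Once $M$ is realised as a Koszul twisted complex over $\Delta$, the conclusion is immediate from Lemma \ref{lma:diaggen}: that lemma states precisely that such a Koszul twisted complex split-generates the summand $\Delta_\alpha$ if and only if $\HF(M,\Delta_\alpha;\kk)\neq 0$. The ``only if'' direction is tautological (a split-generator cannot be orthogonal to the summand it generates), while the ``if'' direction uses the block decomposition of $\QH(X;\kk)$ (Lemma \ref{lma:qhnilp}) to conclude that each edge morphism $a_{i,0}$ of the Koszul complex, after projecting to the $\alpha$-summand, is either nilpotent (in which case Corollary \ref{cor:nilpgen} yields split-generation of $\Delta_\alpha$) or a quasi-isomorphism (in which case Corollary \ref{cor:quisoimplieszero} forces the $\alpha$-summand of $M$ to be zero, contradicting non-vanishing of $\HF(M,\Delta_\alpha;\kk)$).

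The last sentence of the theorem, concerning a free Lagrangian orbit $L=\mu^{-1}(0)$, follows from the observation that in this case $M=L\times L$, so to transfer split-generation in $\fF(X^-\times X;\kk)$ to split-generation in $\fF(X;\kk)$ one applies the projection-functor formalism of Example \ref{exm:projfunc} and Remark \ref{rmk:projfunc}: the functor $\mathcal{I}_{L,L}$ sends any object $P$ to $\CF(L,P)\otimes L$, which contains $L$ as a summand whenever $\HF(L,P)\neq 0$, so split-generation of $\Delta_\alpha$ by $L\times L$ implies split-generation of $D^\pi\fF(X;\kk)_\alpha$ by $L_\alpha$.

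The main obstacle I expect is bookkeeping rather than conceptual: one must verify that all the monotonicity and admissibility hypotheses required to apply $\mathfrak{C}$ to the whole Koszul tower (and not just to $G$ and $T^*_1G$ individually) are satisfied, so that Lemma \ref{lma:functorkoszul} genuinely applies in our quilted setting with a non-compact domain category $\wW(T^*G;\kk)$ and compact correspondence $C$. This is precisely what the framework of $\fF_C$ built in Section \ref{sct:wrappedquilt} together with the annulus monotonicity of Lemma \ref{lma:monotone} is designed to handle.
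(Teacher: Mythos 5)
Your proposal is correct and follows essentially the same route as the paper: Corollary \ref{cor:grps} to write the zero-section as a Koszul twisted complex over $T^*_1G$, Lemma \ref{lma:functorkoszul} to push it through $\mathfrak{C}$, the geometric composition results (Lemmas \ref{lma:diagcomp}, \ref{lma:mcf}, \ref{lma:monotone}) to identify $\mathfrak{C}(T^*_1G)\simeq\Delta$ and $\mathfrak{C}(G)\simeq M$, and Lemma \ref{lma:diaggen} to conclude. The material on the free-orbit case is not part of this statement (it is Corollary \ref{cor:split-gen}), but its inclusion is harmless.
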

\begin{proof}
  By Corollary \ref{cor:grps}, we know that, in $\wW(T^*G)$, the
  zero-section $G\subset T^*G$ can be written as a Koszul twisted
  complex built out of $T^*_1G$. By Lemma \ref{lma:functorkoszul}, the
  image of the Koszul twisted complex under the moment correspondence
  functor $\mathfrak{C}$ is a Koszul twisted complex representing
  $\mathfrak{C}(G)$ built out of $\mathfrak{C}(T^*_1G)$. By Lemma
  \ref{lma:mcf} and Theorem \ref{thm:formal}(i) we see that
  $\mathfrak{C}(T^*_1G)\simeq\Delta$ and $\mathfrak{C}(G)\simeq
  M$. Therefore $M$ is quasi-isomorphic to a Koszul twisted complex
  built out of $\Delta$. Lemma \ref{lma:diaggen} immediately implies
  the desired result.
\end{proof}

We first work out the implications of this theorem in the situation
where the $G$-action has a free Lagrangian orbit.

\begin{cor}\label{cor:split-gen}
  In the situation of Setup \ref{stp}, if $L=\mu^{-1}(0)$ is a free
  Lagrangian $G$-orbit with $\HF(L_\alpha,L_\alpha;\kk)\neq 0$ then
  $L_\alpha$ split-generates $D^{\pi}\fF(X;\kk)_{\alpha}$.
\end{cor}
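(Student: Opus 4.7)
The plan is to reduce Corollary \ref{cor:split-gen} to Theorem \ref{thm:generation} and then transport split-generation from the product category down to $\fF(X;\kk)$ via the functor $\Phi$. First observe that, when $\mu^{-1}(0) = L$ is a single free Lagrangian $G$-orbit, the set $M$ defined in Setup \ref{stp} is simply $L \times L \subset X^- \times X$, since both components $x,y$ lie in $L$ and the $G$-action is transitive there. Thus Theorem \ref{thm:generation} applies to $M = L \times L$.

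Next I would verify $\HF(L\times L, \Delta_\alpha; \kk) \neq 0$. This is a Künneth-type computation: the geometric composition $\Delta \circ (L\times L) \simeq L\times L$ (or equivalently a direct quilt argument with a single seam) yields a $\QH(X;\kk)$-equivariant identification $\HF(L\times L, \Delta;\kk) \cong \HF(L,L;\kk)$. Applying the idempotent $\mathcal{E}_\alpha$ on both sides, this restricts to $\HF(L\times L, \Delta_\alpha;\kk) \cong \HF(L_\alpha, L_\alpha;\kk)$, which is nonzero by hypothesis. Theorem \ref{thm:generation} then shows that $L\times L$ split-generates $\Delta_\alpha$ in $\fF(X^-\times X;\kk)$.

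To descend to $X$, apply the functor $\Phi$. Since $\Phi(\Delta) \simeq \OP{Id}$ and, by Example \ref{exm:projfunc}, $\Phi(L\times L) \simeq \mathcal{I}_{L,L}$ where $\mathcal{I}_{L,L}(P) = \CF(L,P) \otimes L$, the split-generation of $\Delta_\alpha$ by $L\times L$ yields, via Lemma \ref{lma:twcpxfunctors}, a twisted complex of functors exhibiting the identity summand $\OP{Id}_\alpha$ as a summand of iterated cones on $\mathcal{I}_{L,L}$. Evaluating on any $P \in D^\pi \fF(X;\kk)_\alpha$ expresses $P$ as a summand of an object built from copies of $\mathcal{I}_{L,L}(P) = \CF(L,P) \otimes L$, which lies in the triangulated envelope of $L$. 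Projecting onto the $\alpha$-summand (using $\mathcal{E}_\alpha$) shows that $P$ is split-generated by $L_\alpha$ inside $D^\pi \fF(X;\kk)_\alpha$.

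The main subtlety is the bookkeeping of the idempotent splitting: one must check that the quasi-isomorphism $\Phi(\Delta)\simeq\OP{Id}$ is compatible with the decomposition into $\alpha$-summands, so that the Koszul twisted complex representing $\Delta_\alpha$ is carried under $\Phi$ to a twisted complex of functors whose evaluation on $P$ lands in the $\alpha$-piece and reproduces $P$ itself (up to summand) rather than some other summand. This is guaranteed by the orthogonality of the idempotents $\mathcal{E}_\beta$ and the unital compatibility of $\Phi$ with closed-open maps noted in the remark after Example \ref{exm:projfunc}, so no new geometric input is required beyond Theorem \ref{thm:generation}.
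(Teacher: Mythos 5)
Your proposal is correct and follows essentially the same route as the paper: identify $M=L\times L$, use the identification $\HF(L\times L,\Delta_\alpha;\kk)\cong\HF(L_\alpha,L_\alpha;\kk)$, and invoke Theorem \ref{thm:generation}. The descent step you spell out via $\Phi$, the projection functor $\mathcal{I}_{L,L}$ and Lemma \ref{lma:twcpxfunctors} is exactly the content of the unnamed corollary at the end of Section \ref{sct:stdquilt}, which the paper's terser proof invokes implicitly.
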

\begin{proof}
  In the situation that $\mu^{-1}(0)$ is a Lagrangian orbit, the image
  of the zero-section under the moment correspondence functor is
  $L\times L$. We have
  \[\HF(L\times L,\Delta;\kk)=\HF(L,L;\kk)\]
  and each side decomposes under $1=\sum_{\alpha\in W}e_\alpha$ as
  \[\HF(L\times L,\Delta_\alpha;\kk)=\HF(L_\alpha,L_\alpha;\kk).\]
  Therefore the statement follows from Theorem \ref{thm:generation}.
\end{proof}

The next corollary shows that, if a summand of the Fukaya category
contains a nonzero object $L_\alpha$ arising as a free Lagrangian
$G$-orbit, then any other object in that summand split-generates the
summand. It also gives an upper bound on the rank of Floer cohomology of
$L_\alpha$ with any other object.

\begin{cor}\label{cor:othergeneration}
  In the situation of Setup \ref{stp}, if $L=\mu^{-1}(0)$ is a free
  Lagrangian $G$-orbit with $\HF(L_\alpha,L_\alpha;\kk)\neq 0$ and
  $L'\in D^{\pi} \fF(X;\kk)_\alpha$ is another nonzero object then
  $L'$ split-generates $D^{\pi}\fF(X;\kk)_\alpha$ and the inequality
  \[\hf(L_\alpha,L')\leq 2^{n/2}\sqrt{\hf(L',L')}.\]
  holds. Here, $\hf$ denotes the rank of $\HF$.
\end{cor}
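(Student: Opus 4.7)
The plan is to transport the Koszul twisted complex from the proof of Theorem~\ref{thm:generation} into the $\alpha$-summand of $\fF(X;\kk)$ by applying the $A_\infty$-functor $\Phi$, deduce split-generation from nilpotence, and obtain the numerical bound by computing the same dimension in two ways.

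First, the proof of Theorem~\ref{thm:generation}, restricted to the $\alpha$-block, gives a quasi-isomorphism $L_\alpha\times L_\alpha\simeq K_0$, where $K_0$ is a Koszul twisted complex built out of $\Delta_\alpha$ whose edges $a_{i,0}$ lie in $\QH(X;\kk)_\alpha$. The hypothesis $\HF(L_\alpha,L_\alpha;\kk)\neq 0$ prevents any $a_{i,0}$ from being invertible (otherwise Corollary~\ref{cor:quisoimplieszero} would force $L_\alpha\times L_\alpha\simeq 0$), so Lemma~\ref{lma:qhnilp} forces every $a_{i,0}$ to be nilpotent. Next, apply $\Phi\colon\fF(X^-\times X;\kk)\to\nufun(\fF(X;\kk),\fF(X;\kk))$ to $K_0$ and evaluate the resulting twisted complex of functors on $L'$ via Lemma~\ref{lma:twcpxfunctors}. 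Since $\Phi(\Delta_\alpha)(L')\simeq L'$ and $\Phi(L_\alpha\times L_\alpha)(L')\simeq\CF(L_\alpha,L';\kk)\otimes L_\alpha$, this yields a quasi-isomorphism $\CF(L_\alpha,L';\kk)\otimes L_\alpha\simeq K$ where $K$ is a Koszul twisted complex built out of $L'$, whose edges are the images $\mathcal{CO}^0(a_{i,0})\in\HF(L',L';\kk)$. Because $\mathcal{CO}^0$ is a unital ring homomorphism, these edges remain nilpotent, and Corollary~\ref{cor:nilpgen} implies $K$ split-generates $L'$.

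For the split-generation claim: since $L'\neq 0$ and $L_\alpha$ split-generates the $\alpha$-summand (Corollary~\ref{cor:split-gen}), we must have $\HF^*(L_\alpha,L';\kk)\neq 0$; by Remark~\ref{rmk:projfunc}, $L_\alpha$ is then a direct summand of $\CF(L_\alpha,L';\kk)\otimes L_\alpha$, and combining with the previous step, $L_\alpha$ is split-generated by $L'$. Since $L_\alpha$ split-generates the whole $\alpha$-summand, so does $L'$.

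For the numerical inequality, compute $\dim H\hom(\CF(L_\alpha,L';\kk)\otimes L_\alpha,L')$ two ways. On one hand, K\"unneth applied to the tensor-product structure yields $\hf(L_\alpha,L';\kk)^2$. On the other hand, the filtration spectral sequence associated to the Koszul twisted complex $K$ bounds the same quantity by $N\cdot\hf(L',L';\kk)$, where $N$ counts the total multiplicity of shifted copies of $L'$ in $K$. Counting summands in the Koszul resolution of $\kk$ over $H_{-*}(\Omega G;\kk)$ expresses $N$ in terms of the rank of $G$, and rearranging yields $\hf(L_\alpha,L';\kk)\leq 2^{n-1}\sqrt{\hf(L',L';\kk)}$. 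The main obstacle is this numerical bookkeeping --- tracking shifts and gradings through the Koszul construction to pin down the constant $2^{n-1}$; the conceptual content (nilpotence propagating under $\Phi$ and $\mathcal{CO}^0$, and $L_\alpha$ splitting off from $\CF(L_\alpha,L';\kk)\otimes L_\alpha$) follows directly from the framework built up in earlier sections.
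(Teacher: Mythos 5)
Your overall route is the paper's: push the Koszul twisted complex for $L_\alpha\times L_\alpha$ through $\Phi$, evaluate on $L'$ via Lemma \ref{lma:twcpxfunctors} to identify $\CF(L_\alpha,L')\otimes L_\alpha$ with a Koszul twisted complex $K$ built out of $L'$, peel off $L_\alpha$ as a direct summand using Remark \ref{rmk:projfunc}, and get the inequality by computing $H(\hom(K,L'))$ in two ways (K\"unneth on one side, the length-filtration spectral sequence on the other).

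There is, however, one inference that as written goes the wrong way. You use nilpotence of the pushed-forward edges and Corollary \ref{cor:nilpgen} to conclude that $K$ \emph{split-generates} $L'$, and then ``combine'' this with the fact that $L_\alpha$ is a direct summand of $K$ to deduce that $L'$ split-generates $L_\alpha$. That implication is invalid: ``$K$ split-generates $L'$'' places $L'$ in the split-closure of $K$, which says nothing about $L_\alpha$ lying in the split-closure of $L'$. What you need is the opposite --- and trivial --- direction: $K$ is by definition an iterated cone on shifted copies of $L'$, hence lies in the subcategory generated by $L'$ with no nilpotence hypothesis whatsoever; since $L_\alpha$ is a direct summand of $K$, it is split-generated by $L'$, and the conclusion follows because $L_\alpha$ split-generates $D^{\pi}\fF(X;\kk)_\alpha$ by Corollary \ref{cor:split-gen}. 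This is exactly the paper's argument, and it lets you discard the claim that the edges $\mathcal{CO}^0(a_{i,0})$ of $K$ are nilpotent in the chain-level sense of Definition \ref{dfn:nilp} --- a claim that would require care about the model of $\hom(L',L')$ and is never needed here. On the constant: expanding all cones shows $K$ consists of $2^n$ shifted copies of $L'$, so the spectral sequence gives $\hf(L_\alpha,L')^2\leq 2^n\,\hf(L',L')$, which is how the paper pins down the stated bound; your proposal leaves this count open but the bookkeeping is exactly as you anticipate.
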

\begin{proof}
  As in the proof of Theorem \ref{thm:generation}, $L_\alpha\times
  L_\alpha$ is quasi-isomorphic to a Koszul twisted complex built out
  of $\Delta_\alpha$. Under the functor $\Phi$, this says that
  $\Phi(L_\alpha\times L_\alpha)$ is quasi-isomorphic to a Koszul
  twisted complex $\mathcal{T}$ of functors built out of the identity
  functor. Since $\Phi(L_\alpha\times L_\alpha)(L')\simeq
  \CF(L_\alpha,L')\otimes L_\alpha$ and $\Phi(\Delta)(L')\simeq L'$,
  Lemma \ref{lma:twcpxfunctors} implies that $\CF(L_\alpha,L')\otimes
  L_\alpha$
  is quasi-isomorphic to a Koszul twisted complex $K$ built
  out of $L'$. This contains $L_\alpha$ as a summand (see Remark
  \ref{rmk:projfunc}) and is generated by $L'$, hence $L'$
  split-generates $L_\alpha$. Since $L_\alpha$ itself split-generates
  $D^{\pi}\fF(X;\kk)_\alpha$, we deduce that $L'$ split-generates
  $D^{\pi}\fF(X;\kk)_\alpha$.

  Computing the $\mu^1_\Tw$-cohomology of the space of morphisms from
  $K$ to $L'$ gives
  \[\HF(L',L_\alpha)\otimes \HF(L_\alpha,L')\cong H(\hom(K,L'),\mu^1_\Tw).\]
  Expanding all the cones in the Koszul twisted complex $K$, we see
  that it is built out of $2^n$ copies of $L'$. To see this, using the
  notation of Definition \ref{dfn:koszul}, we see that
  $K=\Cone{\TO{K_{n-1}[s_{n-1}]}{x_{n-1}}K_{n-1}}$ is quasi-isomorphic
  to the twisted complex
  \begin{equation}\label{eq:koszulcube1}
    K_{n-1}[s_{n-1}+1]\xdashrightarrow{-x_{n-1}[1]}K_{n-1}
  \end{equation}
  built out of two copies of $K_{n-1}$. Similarly $K_{n-1}$ is
  quasi-isomorphic to
  \begin{equation}\label{eq:koszulcube2}
    K_{n-2}[s_{n-2}+1]\xdashrightarrow{-x_{n-2}[1]}K_{n-2},
  \end{equation}
  so replacing each $K_{n-1}$ in \eqref{eq:koszulcube1} by
  \eqref{eq:koszulcube2}, we get a twisted complex for $K$ built out of
  four copies of $K_{n-2}$. Continuing in this manner, we find a
  twisted complex for $K$ built out of $2^n$ copies of $K_0=L'$.

  By definition of the category of twisted complexes, the chain group
  $\hom(K,L')$ is therefore a direct sum of $2^n$ shifted copies of
  $\CF(L',L')$. The differential $\mu^1_\Tw$ preserves the length
  filtration on this chain group, so we get a spectral sequence whose
  $E_1$-term is the direct sum of $2^n$ shifted copies of
  $\HF(L',L')$. The rank of $HF(L',L_\alpha)\otimes HF(L_\alpha,L')$,
  which equals $\hf(L',L_\alpha)^2$, is therefore bounded above by
  $2^n\hf(L',L')$, which implies the stated inequality.
\end{proof}

\begin{rmk}
  This inequality generalises to the case when $G$ is an arbitrary
  compact Lie group. In that situation, you can still represent the
  zero-section as a twisted complex built out of the cotangent fibre
  \cite{AbouzaidBasedLoops}. Abouzaid's construction produces a
  twisted complex out of any Morse function $f\colon G\to\RR$ with one
  copy of $T^*_1G$ for every critical point. The argument in the
  corollary shows that if $\HF(L_\alpha,L')\neq 0$ then $L'$
  split-generates $L_\alpha$ and that $\hf(L_\alpha,L')\leq
  \sqrt{m(G)\hf(L',L')}$ where $m(G)$ is the minimal number of
  critical points of a Morse function on $G$.
\end{rmk}

\begin{cor}\label{cor:aboucrit}
  In the situation of Setup \ref{stp}, if $L=\mu^{-1}(0)$ is a free
  Lagrangian $G$-orbit with $\HF(L_\alpha,L_\alpha;\kk)\neq 0$ then
  \[\HH^{\bullet}(D^{\pi}\fF(X;\kk)_\alpha)\cong \QH^{\bullet}(X;\kk)_\alpha.\]
\end{cor}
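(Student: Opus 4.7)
The plan is to identify both sides via the closed-open map, using the split-generation from Corollary \ref{cor:split-gen} together with the Koszul twisted complex machinery of Section \ref{sct:twcpx}. Since $\HF(L_\alpha,L_\alpha;\kk)\neq 0$, Corollary \ref{cor:split-gen} tells us that $L_\alpha$ split-generates $D^\pi\fF(X;\kk)_\alpha$, and Morita invariance of Hochschild cohomology gives
\[\HH^\bullet(D^\pi\fF(X;\kk)_\alpha)\cong\HH^\bullet(\CF(L_\alpha,L_\alpha;\kk)).\]
The problem therefore reduces to showing that the closed-open map $\mathcal{CO}\colon \QH^\bullet(X;\kk)_\alpha\to\HH^\bullet(\CF(L_\alpha,L_\alpha;\kk))$, which is a unital ring homomorphism, is in fact an isomorphism. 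Recall from Section \ref{sct:stdquilt} that this map arises from applying the functor $\Phi\colon\fF(X^-\times X;\kk)\to\nufun(\fF(X;\kk),\fF(X;\kk))$ to morphisms $\Delta\to\Delta$ and then evaluating at $L_\alpha$.

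To exhibit $\mathcal{CO}$ as an isomorphism, I would apply $\Phi$ to the Koszul twisted complex from the proof of Theorem \ref{thm:generation}, which expresses $L\times L\simeq\mathfrak{C}(G)$ in terms of $\Delta$ in $\fF(X^-\times X;\kk)$. By Lemma \ref{lma:functorkoszul} together with Example \ref{exm:projfunc}, this produces a Koszul twisted complex in $\nufun$ exhibiting $\mathcal{I}_{L,L}\simeq\Phi(L\times L)$ in terms of $\Phi(\Delta)\simeq\OP{id}$. Restricting to the $\alpha$-summand and re-running the argument of Lemma \ref{lma:diaggen}/Theorem \ref{thm:generation} inside $\nufun$ gives split-generation of $\OP{id}_\alpha$ by $\mathcal{I}_{L_\alpha,L_\alpha}$. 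Both $\HF^\bullet(\Delta_\alpha,\Delta_\alpha)=\QH^\bullet(X;\kk)_\alpha$ and $H^\bullet\hom_{\nufun}(\OP{id}_\alpha,\OP{id}_\alpha)\cong\HH^\bullet(D^\pi\fF(X;\kk)_\alpha)$ can then be computed from the respective Koszul twisted complexes, and $\Phi^1$ intertwines these computations. Because $\Phi$ preserves the twisted-complex structure and sends the identity on $\Delta_\alpha$ to the identity on $\OP{id}_\alpha$, it induces the closed-open map as a morphism of length-filtered chain complexes between the two sides.

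The main obstacle is the chain-level bookkeeping: one needs to verify that $\Phi^1$ carries the generators (the $a_{i,0}$ and the diagonal terms $c_{i,j}$) of the Koszul twisted complex to the analogous generators of its image, and that this is compatible with the block decomposition into $\alpha$-summands from Section \ref{sct:qh}. Once set up carefully, the quasi-isomorphism follows from a spectral sequence comparison whose $E_1$-page agrees on both sides by construction (both being generated over $\QH^\bullet(X;\kk)_\alpha$ by the same Koszul data), and the spectral sequences converge by the standard length-filtration argument for twisted complexes. This yields the desired isomorphism $\HH^\bullet(D^\pi\fF(X;\kk)_\alpha)\cong\QH^\bullet(X;\kk)_\alpha$.
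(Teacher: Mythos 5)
Your first step (Morita invariance reducing the left-hand side to $\HH^\bullet(\CF(L_\alpha,L_\alpha;\kk))$, using Corollary \ref{cor:split-gen}) is fine, but the core of your argument --- the spectral sequence comparison --- is circular. After applying $\Phi$ to the Koszul twisted complex $K_\alpha$ built out of $\Delta_\alpha$, the length filtration on $\hom(K_\alpha,\Delta_\alpha)$ has $E_1$-page a sum of $2^n$ shifted copies of $H^\bullet\hom(\Delta_\alpha,\Delta_\alpha)=\QH^\bullet(X;\kk)_\alpha$, while the length filtration on $\hom_{\nufun}(\Phi(K_\alpha),\OP{id}_\alpha)$ has $E_1$-page a sum of $2^n$ shifted copies of $H^\bullet\hom_{\nufun}(\OP{id}_\alpha,\OP{id}_\alpha)=\HH^\bullet(D^\pi\fF(X;\kk)_\alpha)$. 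The map between the $E_1$-pages induced by $\Phi^1$ is a direct sum of copies of $\mathcal{CO}$, so asserting that ``the $E_1$-page agrees on both sides by construction'' is exactly the statement you are trying to prove, not an input to it. Moreover, even if the comparison went through, it would only identify $\HF(M_\alpha,\Delta_\alpha)$ with $H^\bullet\hom_{\nufun}(\mathcal{I}_{L_\alpha,L_\alpha},\OP{id}_\alpha)$; passing from that back to a statement about $\hom(\Delta_\alpha,\Delta_\alpha)$ versus $\hom_{\nufun}(\OP{id}_\alpha,\OP{id}_\alpha)$ requires knowing that $\Phi$ is fully faithful there, which is again the point at issue.

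The missing ingredient is the one the paper actually invokes: the final part of the proof of Smith's Corollary 3.11 shows that whenever $\Delta_\alpha$ is split-generated by \emph{product} Lagrangians, the natural map $\QH^\bullet(X;\kk)_\alpha=\HF(\Delta_\alpha,\Delta_\alpha)\to\HH^\bullet(D^\pi\fF(X;\kk)_\alpha)$ is an isomorphism. The substance there is a K\"unneth-type full-faithfulness statement, $\HF_{X^-\times X}(L_1\times L_2,L_3\times L_4)\cong\HF(L_3,L_1)\otimes\HF(L_2,L_4)$, matching morphisms of product Lagrangians with morphisms of the corresponding tensor-product bimodules/functors; full-faithfulness then propagates from products to anything in their split-closure, in particular to $\Delta_\alpha$ once Theorem \ref{thm:generation} shows $M_\alpha=L_\alpha\times L_\alpha$ split-generates $\Delta_\alpha$. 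Your Koszul-complex machinery correctly supplies the split-generation input, but it cannot by itself replace the full-faithfulness-on-products step; you need to cite or reprove that statement to close the argument.
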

\begin{proof}
  The final part of the proof of {\cite[Corollary 3.11]{Smith}} shows
  that, if $\Delta_\alpha$ is split-generated by product Lagrangians,
  then $\HH^{\bullet}(D^{\pi}\fF(X;\kk)_\alpha)\cong
  \QH(X;\kk)_\alpha$. The proof of Theorem \ref{thm:generation}
  implies that $\Delta_\alpha$ is split-generated by
  $M_\alpha=L_\alpha\times L_\alpha$. This proves the corollary.
\end{proof}

\begin{rmk}
  The proofs also carry over if $M$ is equipped with a local system
  pushed forward along the moment correspondence from a local system
  on the zero-section. In the case when $M=L\times L$, and $L$ is a
  free $G$-orbit, this is easy to understand: if $E$ is a local system
  on $G$ and $\underline{\kk}$ is the trivial local system on $C$ then
  $E\circ\underline{\kk}$ is the local system on $L\times L$
  isomorphic to $E^*\boxtimes E$. In other words, if
  $\rho\colon\pi_1(L)\to\kk^\times$ is the monodromy of $E$ then the
  monodromy of $E\circ\underline{k}$ around a loop
  $(\gamma,\delta)\in\pi_1(L\times L)$ is
  $\rho(\gamma)^{-1}\rho(\delta)$.
\end{rmk}

The next corollary, which explains the full implications of Theorem
\ref{thm:generation}, uses the full strength of quilt theory, in
particular the existence of an $A_\infty$-functor
$\Phi\colon\fF(X^{-}\times Y)\to\nufun(\fF^{\#}(X),\fF^{\#}(Y))$ with
the property that the functors $\Phi(J\circ K)$ and
$\Phi(J)\circ\Phi(K)$ are quasi-isomorphic when the geometric
composition is transverse and embedded.

\begin{cor}\label{cor:nolagorb}
  In the situation of Setup \ref{stp}, suppose that (a)
  $\mu^{-1}(0)/G$ has the property that its derived extended Fukaya
  category is quasi-equivalent to its derived Fukaya category, and
  that (b) $\HF(M,\Delta_\alpha;\kk)\neq 0$. Then the $\alpha$-summand
  of the monotone Fukaya category $D^{\pi}\fF(X;\kk)_\alpha$ is
  split-generated by $G$-invariant Lagrangian submanifolds.
\end{cor}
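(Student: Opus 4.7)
The plan is to factor the Lagrangian $M$ through the symplectic reduction and then transfer split-generation across the factorisation via the quilt formalism. Let $\bar{X} := \mu^{-1}(0)/G$, which under Setup \ref{stp} is a smooth symplectic manifold of dimension $\dim X - 2\dim G$, and let $L_{red} \subset X^- \times \bar{X}$ denote the reduction correspondence $\{(x,[x]) : x \in \mu^{-1}(0)\}$. A direct calculation, using freeness of the $G$-action, shows that the pair $(L_{red}, L_{red}^T)$ is composable with geometric composition equal to $M$.

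By Theorem \ref{thm:generation} together with hypothesis (b), $M$ split-generates $\Delta_\alpha$ in $\fF(X^- \times X;\kk)$. Applying the quilt functor $\Phi$ and the MWW geometric composition theorem to the pair $(L_{red}, L_{red}^T)$, we obtain a quasi-isomorphism of $A_\infty$-functors $\fF^\#(X) \to \fF^\#(X)$ between $\Phi(M)$ and the composite obtained by first applying $\Phi(L_{red}) \colon \fF^\#(X) \to \fF^\#(\bar{X})$ and then $\Phi(L_{red}^T) \colon \fF^\#(\bar{X}) \to \fF^\#(X)$. Lemma \ref{lma:twcpxfunctors} then implies that for any $Y \in D^\pi \fF(X;\kk)_\alpha$, the object $Y \simeq \Phi(\Delta_\alpha)(Y)$ is split-generated by
\[ \Phi(M)(Y) \simeq \Phi(L_{red}^T)\bigl(\Phi(L_{red})(Y)\bigr). \]

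The intermediate object $\Phi(L_{red})(Y) = (Y, L_{red})$ lives in the extended Fukaya category of $\bar{X}$; by hypothesis (a) it is quasi-isomorphic to an object of the ordinary derived Fukaya category, hence split-generated by some finite collection of honest Lagrangians $\{N_i\} \subset \bar{X}$. Pushing forward by $\Phi(L_{red}^T)$, we conclude that $Y$ is split-generated by the objects $(N_i, L_{red}^T)$. Each such pair is composable, with geometric composition equal to $\pi^{-1}(N_i) \subset \mu^{-1}(0) \subset X$, where $\pi \colon \mu^{-1}(0) \to \bar{X}$ is the quotient map; this preimage is manifestly a $G$-invariant Lagrangian. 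A final invocation of the geometric composition theorem identifies $(N_i, L_{red}^T)$ with $\pi^{-1}(N_i)$ in the derived Fukaya category of $X$, completing the proof.

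The main obstacle is technical rather than conceptual: one must verify the admissibility and monotonicity hypotheses required to invoke the MWW geometric composition theorem for each of the three compositions $(L_{red}, L_{red}^T)$, $(Y, L_{red})$, and $(N_i, L_{red}^T)$. In particular, the annulus monotonicity condition of Remark \ref{rmk:yend-mon} was verified for the composition $(G, C)$ in Lemma \ref{lma:monotone} using the simple connectivity of the $T^*G$ factor, and an analogous argument must be carried out here. Since $\bar{X}$ need not be simply connected, one expects either to restrict attention to admissible classes of $\{N_i\}$ (allowed by the freedom in choosing the split-generating set produced by hypothesis (a)) or to exploit the fibration structure $\pi \colon \mu^{-1}(0) \to \bar{X}$ to reduce annuli bounded by $Y \times N_i$ and $L_{red}$ to annuli entirely within a single factor, as in the proof of Lemma \ref{lma:monotone}.
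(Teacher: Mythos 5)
Your proposal is correct and follows essentially the same route as the paper: the paper likewise factors $M$ (equivalently $\Phi(M)$) through the reduction correspondence $\Gamma=\{(b,y)\in B^-\times X:\varpi(y)=b\}$, which is your $L_{red}^T$, invokes hypothesis (a) to replace the generalised object of $\fF^{\#}(\mu^{-1}(0)/G;\kk)$ by honest Lagrangians $Q$, and identifies $\Phi(\Gamma)(Q)=\varpi^{-1}(Q)$ as a $G$-invariant Lagrangian. Your closing remarks on admissibility and annulus monotonicity flag exactly the technical hypotheses the paper leaves implicit in this proof.
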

\begin{proof}
  By Theorem \ref{thm:generation}, $M_\alpha$ split-generates
  $\Delta_\alpha$. Therefore the image of the corresponding functor
  $\Phi(M_\alpha)$ split-generates $D^{\pi}\fF(X;\kk)_\alpha$. We will
  show that $\Phi(M)$ factors through another functor whose image is
  split-generated by the $G$-inv\-ar\-i\-ant Lagrangian submanifolds.

  The subset $\mu^{-1}(0)$ is a fibred coisotropic (i.e. the
  symplectic reduction is a manifold and the isotropic leaves are the
  fibres of a fibre bundle over the symplectic reduction). Indeed, the
  isotropic leaves are precisely the $G$-orbits. Let $B$ denote the
  symplectic reduction $\mu^{-1}(0)/G$. Let
  $\varpi\colon\mu^{-1}(0)\to B$ denote the projection map. Let
  \[\Gamma:=\{(x,y)\in\ B^{-}\times X\ :\ \varpi(y)=x\in B\}\]
  denote the associated Lagrangian correspondence. This correspondence
  gives us a quilt functor
  $\Phi(\Gamma)\colon\fF(B;\kk)\to\fF^{\#}(X;\kk)$. It is easy to see
  that $M=\Gamma^T\circ\Delta^T_B\circ\Gamma\subset X^{-}\times
  X$. Therefore, by the geometric composition result for functors, the
  functor $\Phi(M)$ factors through $\Phi(\Gamma)$. The functor
  $\Phi(\Gamma)$ is defined on the full extended category
  $\fF^{\#}(B;\kk)$, but by assumption if we pass to the derived
  category we can consider just the objects in the image of
  $\Phi(\Gamma)$ applied to $\fF(B;\kk)$. The geometric composition of
  such an object with $\Gamma$ is a $G$-invariant Lagrangian
  submanifold: if $Q\subset B$ is a Lagrangian then
  $\Phi(\Gamma)(Q)=\varpi^{-1}(Q)$. This gives the corollary.
\end{proof}

\section{Examples and applications}\label{sct:examples}

\subsection{The quadric 3-fold}

Let $X$ be the quadric 3-fold. The fact that a real ellipsoid in $X$
split-generates the zero-summand of $\fF(X;\CC)$ was proved in
\cite{Smith}. We begin by recovering this result, as we found this
example very instructive in formulating the general theory.

The quadric 3-fold admits an $SU(2)$-action with a free Lagrangian
$SU(2)$-orbit $L$. The Lagrangian $L$ has minimal Maslov number 6,
since it is simply-connected and the ambient minimal Chern number is
3. The number 6 is twice the dimension of $L$, so there cannot be any
quantum contributions to the Floer differential. Therefore we have
$\HF^*(L,L\;\kk)\cong H^*(S^3;\kk)$ for any field. This certainly
implies that $M=L\times L$ is nonzero in $\fF(X^{-}\times X)$, so
split-generates some of the summands of $\Delta$.

Let $\kk$ be a field. The quantum cohomology $\QH(X;\kk)$
is \[\kk[H,E]/(H^2-2E,E^2-H)\] and the first Chern class is
$c_1(X)=3H$. The block decomposition of this ring depends on the
characteristic of $\kk$ and on whether the characteristic polynomial
of $H$ splits over $\kk$.

\begin{itemize}
\item If $\OP{char}\kk=2$ then the nilradical of $\QH(X;\kk)$ is the ideal
  $I$ generated by $H$ and $E$; therefore $\QH(X;\kk)$ is a local ring
  with maximal ideal $I$ and residue field $\kk$. In this case, the
  Fukaya category has only one summand and the quantum cohomology is
  not semisimple.
\item If $\OP{char}\kk\neq 2$ then the element $e_1=H^3/4$ is a
  nontrivial idempotent and the identity splits as a sum of orthogonal
  idempotents $1=e_0+e_1$, with $e_0=1-H^3/4$. (If $\OP{char}\kk=3$ or
  if $\lambda^3-2$ is irreducible over $\kk$ then this is a maximal
  splitting of the identity into idempotents; otherwise $H^3/4$ can be
  further decomposed as a sum of idempotents.) The summand
  $\QH(X;\kk)_1$ associated to $e_1$ is the ideal $(H,E)$ and the
  summand $\QH(X;\kk)_0$ associated to $e_0$ is the ideal $(1-H^3/4)$.
\end{itemize}

Note that, if $\OP{char}(\kk)\neq 2$, the element $E=H^2/2$ acts invertibly
on the summand $\QH(X;\kk)_{1}$ and annihilates the summand
$\QH(X;\kk)_{0}$.

\begin{prp}
 Let $M\subset X^{-} \times X$ be the Lagrangian submanifold defined
 in Equation \eqref{eq:graphofzerolevel}. Then
 $\HF(M,\Delta_\alpha;\kk)\neq 0$ if and only if
 $\alpha=0$. Equivalently, by Theorem \ref{thm:generation}, the
 Lagrangian orbit $L$ split-generates $D^{\pi}\fF(X)_0$ and represents
 the zero object in the other summands.
\end{prp}
\begin{proof}
  Since the minimal Maslov number of $L$ is 6, the grading on
  $\HF^*(L,L;\kk)$ can be taken in $\ZZ/6$. Moreover, the moment
  correspondence is itself simply-connected so has minimal Maslov
  number 6. This means that the moment correspondence functor
  $\mathfrak{C}$ is naturally a functor between $\ZZ/6$-graded
  categories. The Koszul twisted complex representing the zero-section
  of $SU(2)$ has the form
  \[\TODASH{T^*_1SU(2)[3]}{-x[1]}{T^*_1SU(2)}\]
  where $x\in C_{2}(\Omega
  SU(2);\kk)=CW^{-2}(T^*_1SU(2),T^*_1SU(2))$. The Koszul twisted
  complex representing $M=L\times L$ is
  \[\TODASH{\Delta[3]}{-\mathfrak{C}^1(x)[1]}{\Delta}\]
  As the moment correspondence functor preserves the $\ZZ/6$-grading,
  $\mathfrak{C}^1(x)\in \QH^{-2}(X;\kk)=\QH^4(X;\kk)$, so
  $\mathfrak{C}^1(x)$ is a multiple of $E\in \QH^4(X;\kk)$. Suppose
  that $\mathfrak{C}^1(x)=NE$ for some $N\in\kk$; since the whole
  theory makes sense over $\ZZ$, we know that $N$ is the reduction to
  $\kk$ of some fixed integer $N$. We claim that $N=\pm 1$.

  If we accept this for a moment, then we see the Koszul twisted
  complex splits as a direct sum of twisted complexes
  \[\TODASH{\Delta_{\alpha}[3]}{-Ee_\alpha[1]}{\Delta_{\alpha}}\]
  where $e_\alpha$ is the idempotent corresponding to the
  $\alpha$-summand. As we observed above, $Ee_0=0$ and $Ee_1$ is
  invertible in $\QH(X;\kk)_1$. Therefore we see that
  $\HF(M,\Delta_\alpha)=0$ unless $\alpha=0$, and $M$ split-generates
  $\Delta_0$ by Corollary \ref{cor:nilpgen}, so
  $\HF(M,\Delta_0;\kk)\neq 0$.

  It remains to prove that $N=\pm 1$. Suppose this were not the case,
  and let $p$ be a prime dividing $N$ and $\kk'$ be a field of
  characteristic $p$. Over this field, the Koszul twisted complex is
  \[\TODASH{\Delta[3]}{0}{\Delta}\]
  and $L\times L\simeq\Delta[3]\oplus\Delta$. Taking Floer cohomology
  of this twisted complex with $\Delta$ gives
  \begin{equation}
  \label{eq:hfrankcontradiction}\HF(L\times L,\Delta;\kk')\cong \HF(\Delta,\Delta;\kk')[-3]\oplus \HF(\Delta,\Delta;\kk').
  \end{equation}
  We know that $\HF(L\times L,\Delta;\kk')\cong \HF(L,L;\kk')$ which
  has rank 2. We also know that $\HF(\Delta,\Delta;\kk')=\QH(X;\kk')$
  has rank 2. This gives a contradiction to Equation
  \eqref{eq:hfrankcontradiction}.
\end{proof}

\begin{rmk}\label{rmk:summands}
  Note that the first Chern class of $X$ is $3H$. In the literature on
  Fukaya categories, it is common to decompose the Fukaya category
  into summands according to the eigenvalues of quantum product with
  $c_1$. This is a coarser decomposition than the one we use, which
  separates out summands according to the simultaneous eigenspaces of
  quantum multiplication by all semisimple elements of quantum
  cohomology. In the case of the quadric 3-fold and a field $\kk$ of
  characteristic 3, if one only uses eigenvalues of $c_1$, then there
  is only one piece in the decomposition, corresponding to the
  repeated eigenvalue zero. The Lagrangian $SU(2)$-orbit does not
  split-generate the whole of this Fukaya category, only the summand
  corresponding to the idempotent $e_0=1-H^3/4$.
\end{rmk}

\begin{rmk}
  Note that there is a monotone Lagrangian torus $T$ which is (a)
  disjoint from $L$ and (b) nonzero in the Fukaya category provided
  the characteristic of the ground field is not equal to 2. This torus
  is obtained by taking the standard nodal degeneration of $Q$ and
  observing that it is toric; symplectically parallel-transporting the
  barycentric fibre from the nodal quadric to a nearby smooth quadric
  gives the monotone Lagrangian torus $T$, whose superpotential is
  \[x+y+z+\frac{1}{xy}+\frac{1}{xz}.\]
  This follows from the general formula in {\cite[Theorem
      1]{NishinouNoharaUeda}} for the superpotential of a Lagrangian
  torus arising from a toric degeneration, together with the explicit
  calculation of this superpotential for polytope PALP 3 in the
  Fanosearch table of 3-dimensional reflective polytopes
  \cite{Fanosearch}. This torus is clearly disjoint from $L$, as $L$
  is the vanishing cycle of the nodal degeneration, and its behaviour
  in the Fukaya category is determined by the critical points of its
  superpotential. In characteristic not equal to 2, it inhabits the
  summands of the Fukaya category away from $\alpha=0$.
\end{rmk}

\begin{rmk}
  Smith proves split-generation of the 0-summand (in the weaker sense
  of $c_1$-eigenvalues) over $\CC$. For comparison, he uses Seidel's
  exact triangle to show that $L\times L$ generates the following
  twisted complex
  \[\TODASH{\Delta}{C}{\OP{Graph}(\tau_L)^5\simeq\Delta[-5].}\]
  The second term is the graph of fifth power of the Dehn twist in
  $L$, which is Hamiltonian isotopic to the identity in this case. The
  morphism $C$ is the ``cycle class'', defined by counting sections of
  a quadric Lefschetz fibration with five nodal fibres; he shows it is
  a multiple of the first Chern class. Of course, this cannot be true
  in characteristic 3, where $c_1=0$ and where we know that there are
  summands $\Delta_\alpha$ not split-generated by $L$ (so $C\neq 0$).
\end{rmk}

\subsection{Toric Fanos}

Let $X$ be a smooth toric Fano complex projective $n$-fold, and let
$L$ denote the monotone (barycentric) torus fibre. Let $\kk$ be an
algebraically-closed field and $W\colon H^1(L;\kk^{\times})\to\kk$ be
the superpotential of $L$. Each point in $H^1(L;\kk^{\times})$ defines
a $\kk^{\times}$-local system on $L$. If $\xi$ is a critical point of
$W$ then $\HF((L,\xi),(L,\xi);\kk)\cong H^*(T^n;\kk)$
{\cite[Proposition 13.2]{ChoOh}}, {\cite[Theorem 3.9]{FOOOtoric}},
{\cite[Proposition 3.3.1]{BiranCorneaEnumerative}}.

The critical set of $W$ is a 0-dimensional scheme whose ring of
functions (the {\em Jacobian ring}) is known to be isomorphic to
quantum cohomology. For a proof, see {\cite[Theorem 1.3]{FOOOtoric}}
for the statement over $\CC$ and \cite{JackSmith} for the statement in
arbitrary characteristic; we restrict to algebraically closed fields
here only to get an easy correspondence between $\kk$-points of the
critical scheme and elements of $H^1(L;\kk^{\times})$. Each
$\kk$-point $\xi$ of $\OP{Crit}(W)$ therefore gives us (a) a local
system $\xi$ on $L$ with non-vanishing Floer cohomology, and (b) a
local summand $QH(X;\kk)_\xi\subset QH(X;\kk)$, the local ring at
$\xi$. Indeed, $(L,\xi)$ is non-zero as an object in
$D^{\pi}\fF(X;\kk)_\xi$. Since $L$ is a free Lagrangian $T^n$-orbit,
$(L,\xi)$ split-generates $D^{\pi}\fF(X;\kk)_\xi$ by Theorem
\ref{thm:generation}.

\begin{cor}\label{cor:toricgen}
  For each nonzero summand of the Fukaya category of a smooth toric
  Fano variety, there exists a local system $\xi$ on the barycentric
  torus fibre $L$ such that $(L,\xi)$ split-generates this summand. In
  fact, by Corollary \ref{cor:othergeneration}, any Lagrangian which
  is nonzero in the summand $D^{\pi}\fF(X;\kk)_\xi$ split-generates this
  summand.
\end{cor}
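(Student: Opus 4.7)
The plan is to verify that each nonzero summand $D^{\pi}\fF(X;\kk)_\alpha$ fits directly into the framework of Theorem \ref{thm:generation} applied to the action of $G=T^n$, and then to read off the second statement from Corollary \ref{cor:othergeneration}. Since $H^*(T^n;\ZZ)$ is torsion-free, no restriction on $\OP{char}(\kk)$ arises, so the hypotheses of Setup \ref{stp} can be arranged for any algebraically closed $\kk$: the moment map of the toric action can be shifted by the barycenter of the moment polytope so that $0$ is a regular value and $\mu^{-1}(0)=L$ is precisely the monotone torus fibre, on which $T^n$ acts freely. Thus, Corollary \ref{cor:split-gen} is available for $L$ once we know, for each $\alpha$, that there is a local system $\xi$ with $\HF((L,\xi)_\alpha,(L,\xi)_\alpha;\kk)\neq 0$.

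The key input is the identification of $\QH(X;\kk)$ with the Jacobian ring of the superpotential $W\colon H^1(L;\kk^\times)\to\kk$ of the barycentric fibre, in the form proved by Fukaya--Oh--Ohta--Ono. I would combine this with the block decomposition from Lemma \ref{lma:qhnilp}: each local summand $\QH(X;\kk)_\alpha$ corresponds to a single critical point $\xi_\alpha\in H^1(L;\kk^\times)$ of $W$, and at such a critical point $\HF((L,\xi_\alpha),(L,\xi_\alpha);\kk)\cong H^*(T^n;\kk)$ by the standard computations of Cho--Oh and Biran--Cornea. In particular, $CO^0(e_\alpha)$ is the unit of $\HF((L,\xi_\alpha),(L,\xi_\alpha);\kk)$, so $(L,\xi_\alpha)$ represents a nonzero object of $D^{\pi}\fF(X;\kk)_\alpha$.

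With this in hand, $\HF((L,\xi_\alpha)\times(L,\xi_\alpha),\Delta_\alpha;\kk)=\HF((L,\xi_\alpha)_\alpha,(L,\xi_\alpha)_\alpha;\kk)\neq 0$, so Corollary \ref{cor:split-gen} (the Lagrangian-orbit case of Theorem \ref{thm:generation}) yields that $(L,\xi_\alpha)$ split-generates $D^{\pi}\fF(X;\kk)_\alpha$. Finally, if $L'$ is any other nonzero object of the same summand, Corollary \ref{cor:othergeneration} applied to $L_\alpha=(L,\xi_\alpha)$ upgrades this to the statement that $L'$ itself split-generates $D^{\pi}\fF(X;\kk)_\alpha$.

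The only delicate point is the first step above, namely ensuring that the machinery developed in the paper, which is written for a trivial local system on the moment correspondence, extends to include flat $\kk$-line bundles on $L$. This is precisely what the flat-line-bundle extension of quilted Floer theory in Section \ref{sct:locsysquilt} and the remark following Corollary \ref{cor:aboucrit} are designed for: the local system $\xi_\alpha$ on $L$ is the pushforward, under the moment correspondence, of the local system $\underline{\kk}\boxtimes\xi_\alpha^{\vee}\boxtimes\xi_\alpha$ on $G\times L\times L$, and the proof of Theorem \ref{thm:generation} goes through verbatim in the enriched category. Beyond this bookkeeping, there is no additional obstacle.
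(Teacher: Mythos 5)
Your proposal is correct and follows essentially the same route as the paper: identify $\QH(X;\kk)$ with the Jacobian ring of the superpotential via Fukaya--Oh--Ohta--Ono, match each local summand $\QH(X;\kk)_\alpha$ with a critical point $\xi_\alpha$ where $\HF((L,\xi_\alpha),(L,\xi_\alpha);\kk)\cong H^*(T^n;\kk)\neq 0$, and then invoke the main generation theorem for the free $T^n$-orbit $L=\mu^{-1}(0)$ together with Corollary \ref{cor:othergeneration} for the last sentence. Your explicit attention to verifying Setup \ref{stp} (shifting the moment map to the barycentre) and to the flat-line-bundle enrichment of the quilt machinery is slightly more careful than the paper's write-up, but it is the same argument.
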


\begin{rmk}
  Such generation results have been established when the
  superpotential has non-degenerate critical points in characteristic
  zero \cite{Ritter} and for Morse or $A_2$-singularities in
  characteristic $\neq 2,3$ \cite{Tonkonog}. Note that in Corollary
  \ref{cor:toricgen} there is no assumption on nondegeneracy of the
  critical points of the superpotential or on the characteristic of
  the ground field.
\end{rmk}

\begin{exm}\label{exm:psun}
  Projective spaces $\cp{n}$ are toric Fano, so Corollary
  \ref{cor:toricgen} applies to them. If $n=p^k-1$ and $\kk$ is an
  algebraically closed field of characteristic $p$ then
  $\QH(\cp{p^k-1};\kk)$ has no proper idempotent summands, so any
  Lagrangian with $\HF(L,L;\kk)\neq 0$ split-generates the whole
  Fukaya category. In $\cp{m^2-1}$ there is a Lagrangian copy of
  $PSU(m)$. This Lagrangian has minimal Maslov number $2m$ and, when
  $m=p^k$ for some prime $p\geq 3$, its cohomology over $\kk$ is
  {\cite[Th\'{e}or\`{e}me 11.4]{Borel}}
  \[\kk[y]/(y^{p^k})\otimes\Lambda(x_1,x_3,\ldots,x_{2p^k-3})\]
  where $|x_i|=i$, $|y|=2$. Since this is generated by cohomology
  classes of degree strictly less than $2p^k-2$ and the minimal Maslov
  number is $2p^k$, {\cite[Theorem 1.2.2(i)]{BiranCorneaRigidity}}
  implies that $\HF(L,L;\kk)\neq 0$, hence this Lagrangian
  split-generates the Fukaya category over $\kk$. See also Iriyeh's
  paper \cite{IriyehMaslov} where these examples are discussed.
\end{exm}

\begin{exm}\label{exm:realtoric}
  In a similar vein, we can prove that the real part $L$ of a toric
  Fano variety $X$ split-generates the Fukaya category when
  $\OP{char}(\kk)=2$, provided $L$ is orientable. This was proved in
  certain cases by Tonkonog \cite{Tonkonog}, including
  $\rp{2n+1}\subset\cp{2n+1}$ (his methods also enable him to study
  some non-orientable cases). The real part $L$ is known to have
  nonzero Floer cohomology \cite{Haug}. To show that $L$ generates
  every summand, we need an observation of Tonkonog {\cite[Theorem
      1.12]{Tonkonog}}: if $CO^0\colon\QH(X;\kk)\to HF(L,L;\kk)$ is
  the closed-open map then
  \[\ker(CO^0)=\ker(\OP{Frob})\]
  where $\OP{Frob}\colon\QH(X;\kk)\to\QH(X;\kk)$ is the Frobenius map
  $x\mapsto x^2$. Since $e_\alpha^2=e_\alpha$ for all idempotents
  $e_\alpha$ in $\QH(X;\kk)$, none of them live in the kernel of
  $CO^0$. Therefore $L_\alpha\neq 0$ for every summand $\alpha$, and
  $L$ split-generates the whole Fukaya category by Corollary
  \ref{cor:toricgen}.

  The reason we need to assume orientability despite working in
  characteristic 2 is that for a non-orientable Lagrangian $L$, the
  value of $\mathfrak{m}_0(L_\alpha)$ is not determined by $\alpha$:
  we only know that $2\mathfrak{m}_0(L_\alpha)$ is equal to
  $2\alpha(c_1(X)_s)$, which is an empty statement in characteristic
  2. For example $\rp{2}$ has $\mathfrak{m}_0(\rp{2})=0$ while
  $\mathfrak{m}_0(T^2)=1\mod 2$ for the Clifford torus $T^2$. We thank
  Dmitry Tonkonog for drawing attention to this. By contrast, for an
  orientable Lagrangian, the Maslov class $\mu\in H^2(X,L;\ZZ)$ is
  divisible by 2, so one can always find a pseudocycle in $X\setminus
  L$ representing this relative class and the proof of {\cite[Lemma
      2.7]{Sheridan}} shows that $\mathfrak{m}_0(L)$ is an eigenvalue
  of $c_1(X)$ without factors of 2.
\end{exm}

\subsection{Nonformality of quantum cohomology}

The diagonal Lagrangian $\Delta\subset X^{-}\times X$ is relatively
spin {\cite[Proof of Theorem 1.9(1)]{FOOOorient}}, and monotone if $X$
is monotone, so for any field $\kk$ there is a Fukaya-Floer
$A_\infty$-algebra $\CF(\Delta,\Delta;\kk)$. Since
$\HF(\Delta,\Delta;\kk)\cong \QH(X;\kk)$, we can transfer this to an
$A_\infty$-structure on $\QH(X;\kk)$ by homological perturbation. We
will denote this $A_\infty$-algebra by $\mathcal{QH}(X;\kk)$.

Our arguments for split-generation used formality of $C_*(\Omega
G;\kk)$ in a very serious way, to express the zero-section as a Koszul
twisted complex built out of the cotangent fibre. However, we did not
require $\mathcal{QH}(X;\kk)$ to be formal, and indeed when the
underlying algebra $\QH(X;\kk)$ is not semisimple, formality often
fails. Recall the following result:

\begin{thm}[\cite{AvramovIyengar}]\label{thm:avramoviyengar}
  Let $\kk$ be a field and $A$ a commutative $\kk$-algebra,
  finite-dimensional as a $\kk$-vector space. If $\HH^n(A)=0$ for two
  nonnegative values of $n$ of different parity (for example if
  $\HH^\bullet(A)$ is finite-dimensional over $\kk$) then $A$ is a
  product of separable field extensions of $\kk$. In particular, it is
  semisimple.
\end{thm}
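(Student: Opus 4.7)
The plan is to treat the easy direction first (a product of separable field extensions is étale over $\kk$, hence has $\HH^n = 0$ for all $n \geq 1$) and concentrate on the converse: assuming two vanishings of different parity, deduce that $A$ is semisimple. First I would reduce to the local Artinian case. Since $A$ is a finite-dimensional commutative $\kk$-algebra, it decomposes canonically as a finite product $A = \prod_i A_i$ of local Artinian $\kk$-algebras (lift the primitive idempotents of $A/\OP{rad}(A)$, or invoke the Chinese remainder theorem). This decomposition is compatible with Hochschild cohomology: $A^e = \bigoplus_{i,j} A_i \otimes A_j$ and the diagonal bimodule $A$ is supported on the summands with $i = j$, giving $\HH^*(A) = \bigoplus_i \HH^*(A_i)$. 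So the hypothesis passes to each $A_i$, and it suffices to show each $A_i$ is a separable field extension.

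Next, for a local Artinian $A$ with maximal ideal $\mathfrak{m}$ and residue field $K$, the field case $\mathfrak{m} = 0$ is essentially classical: $\HH^1(K/\kk) \cong \Omega^1_{K/\kk}$, which vanishes precisely when $K/\kk$ is separable, and for separable extensions the algebra is étale with all higher $\HH^n = 0$. The content is therefore in the case $\mathfrak{m} \neq 0$, where I would attempt to exhibit non-trivial classes in $\HH^n$ in infinitely many degrees of each parity and derive a contradiction. The natural tool is the cotangent complex $\mathbb{L}_{A/\kk}$, together with a Quillen-type spectral sequence
\[
E_2^{p,q} = \OP{Ext}^p_A\bigl(\OP{Sym}^q (\mathbb{L}_{A/\kk}[1]),\, A\bigr) \Longrightarrow \HH^{p+q}(A),
\]
and the theorem (due to Avramov) that boundedness of $\mathbb{L}_{A/\kk}$ forces $A$ to be a complete intersection. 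For Artinian non-smooth $A$, $\Omega^1_{A/\kk} \neq 0$ already gives a class in $\HH^1$, and the graded-commutative algebra structure on $\HH^*(A)$ would propagate this to many degrees.

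The hard part will be the parity splitting: a priori, one could imagine $\HH^*(A)$ being nonzero only in odd (or only in even) degrees, which is exactly what the Avramov--Iyengar theorem rules out. Controlling both parities requires much more than producing a single nonzero class in $\Omega^1$ or $\bigwedge^2 \Omega^1$; it needs genuine input about the minimal free resolution of $A$ over $A^e$, most likely via the Gulliksen--Levin theory of deviations or Avramov's complete-intersection dichotomy, which asserts a rigid growth dichotomy for such resolutions. I would expect the final step to reduce to showing that the even and odd parts of $\HH^*(A)$ cannot be individually zero above some degree unless the higher deviations all vanish, which (combined with the Artinian hypothesis) returns us to the étale/semisimple regime; this is the step where I would most likely have to consult \cite{AvramovIyengar} for the precise argument rather than rediscovering it from scratch.
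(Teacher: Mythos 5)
The paper does not actually prove this statement: it is quoted from Avramov--Iyengar and used as a black box, so there is no in-paper argument to compare yours against. Judged on its own terms, your reductions are sound: the block decomposition $A=\prod_i A_i$ does give $\HH^{*}(A)\cong\bigoplus_i\HH^{*}(A_i)$, since $A^{e}=\prod_{i,j}A_i\otimes_\kk A_j$ and the diagonal bimodule is supported on the diagonal blocks; and the easy direction (a product of separable field extensions is \'etale, hence $\HH^{n}=0$ for all $n\geq 1$) is fine.

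The genuine gap is exactly where you flag it, and it is the entire content of the theorem: you must show that if $A_i$ is local Artinian and is either not a field or an inseparable field extension, then $\HH^{n}(A_i)\neq 0$ in arbitrarily large degrees of \emph{both} parities. Producing one class in $\HH^{1}$ from $\Omega^{1}_{A_i/\kk}\neq 0$ and cupping does not achieve this: odd classes square to zero up to $2$-torsion, so cup products alone cannot exclude $\HH^{*}$ being concentrated in odd positive degrees, which the hypothesis (one even and one odd vanishing) would not contradict. Your field case has the same defect --- for inseparable $K/\kk$ you note $\HH^{1}\neq 0$, but you need nonvanishing in both parities cofinally; this does follow from the periodic free resolution of $K$ over $K\otimes_\kk K\cong K[x_1,\dots,x_r]/(x_1^{p^{e_1}},\dots,x_r^{p^{e_r}})$, but it must be said. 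For the non-field local case the argument that actually closes runs through the homotopy Lie algebra and deviations: Gulliksen's theorem that a non-complete-intersection local ring has $\varepsilon_n\neq 0$ for all $n\geq 2$ (forcing nonvanishing cohomology in all large degrees), together with the explicit, eventually two-periodic Ext computation for non-smooth Artinian complete intersections, which is nonzero in both parities. Your proposal names the right tools but defers precisely the step that makes the theorem true, so as written it is an annotated outline of the citation rather than a proof.
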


\begin{rmk}
  Note that this theorem is about ungraded algebras. It is possible
  for a non-semisimple graded algebra to become semisimple when the
  grading is collapsed (if the idempotents have mixed degree). The
  quantum cohomology is naturally $\ZZ/2$-graded; it sometimes admits
  a more refined $\ZZ/2N$-grading, but we are only interested in the
  $\ZZ/2$-grading. In fact, we are only interested in $\ZZ/2$-graded
  algebras supported in even degree, which are semisimple as
  $\ZZ/2$-graded algebras if and only if they are semisimple as
  ungraded algebras.
\end{rmk}

\begin{cor}\label{cor:avramoviyengar}
  Let $\aA$ be a $\ZZ/2$-graded $A_\infty$-algebra over $\kk$
  supported in even degree whose cohomology $A:=H(\aA)$ is commutative
  and finite-dimensional as a $\kk$-vector space. If $A$ is not
  semisimple as a $\ZZ/2$-graded algebra then $\HH^{\bullet}(\aA)$ has
  infinite rank.
\end{cor}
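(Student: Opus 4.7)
The plan is to deduce the infinite-dimensionality of $\HH^\bullet(\aA)$ from Theorem \ref{thm:avramoviyengar} by constructing a length-filtration spectral sequence on the Hochschild cochain complex of the minimal model of $\aA$, and then showing that infinite total rank on the $E_2$-page survives to $E_\infty$.

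First I would pass to a minimal model, so that $\aA=(A,\{\mu^k\}_{k\geq 2})$ with $\mu^1=0$; the support-in-even-degree hypothesis then forces $\mu^k=0$ for odd $k$. On the Hochschild cochain complex $CC^\bullet(\aA)=\prod_{s\geq 0}\Hom(A^{\otimes s},A)$ I would introduce the decreasing filtration by tensor length $F^p=\prod_{s\geq p}\Hom(A^{\otimes s},A)$. Each summand of the Hochschild differential coming from $\mu^k$ raises tensor length by $k-1\geq 1$, so $F^\bullet$ is preserved; each graded piece is finite-dimensional, the filtration is complete and Hausdorff, and the associated spectral sequence therefore converges strongly to $\HH^\bullet(\aA)$. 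The $E_2$-page is canonically identified with the classical Hochschild cohomology $\HH^\bullet(A,A)$, with tensor length playing the role of classical cochain degree, and since $\mu^k=0$ for odd $k$ a $\ZZ/2$-parity count shows that the only possibly nonzero higher differentials $d_r^{\mathrm{SS}}$ occur for odd $r\geq 3$, acting as graded derivations on the graded-commutative ring $\HH^\bullet(A,A)$.

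Next I would invoke Theorem \ref{thm:avramoviyengar}: the commutativity, finite-dimensionality, and non-semisimplicity of $A$ preclude the simultaneous vanishing of $\HH^n(A,A)$ at two nonnegative degrees of opposite parity, so $\HH^n(A,A)\neq 0$ for infinitely many $n$ in at least one fixed parity, forcing the $E_2$-page to have infinite total rank.

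The hard part will be showing that this infinite rank persists to $E_\infty$, since in principle the odd higher differentials could collectively annihilate arbitrarily large portions of $E_2$. My intended route is to exploit the Leibniz property of the $d_r^{\mathrm{SS}}$ together with the explicit algebraic structure of $\HH^\bullet(A,A)$: localising at a non-semisimple Artinian summand $A_\alpha$, Koszul-duality considerations for the augmentation $A_\alpha\twoheadrightarrow\kk_\alpha$ produce a polynomial subring on a positive-degree generator in $\HH^\bullet(A_\alpha)$. Since a graded derivation cannot annihilate such a polynomial ring in its entirety --- at worst leaving a Frobenius-twisted sub-polynomial ring in positive characteristic --- an inductive argument across the higher differentials $d_3^{\mathrm{SS}},d_5^{\mathrm{SS}},\ldots$ should produce infinitely many linearly independent survivors on $E_\infty$, yielding $\dim_\kk\HH^\bullet(\aA)=\infty$.
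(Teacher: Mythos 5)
Your proposal founders at exactly the step you flag as ``the hard part'': the infinite rank of the $E_2$-page of the length spectral sequence does \emph{not} survive to $E_\infty$ in general, and no patch can fix this, because the statement you are trying to prove for an arbitrary (non-formal) $\aA$ is false. The paper itself supplies counterexamples: by Proposition \ref{prp:toricfanononformal}, $\HH^\bullet(\mathcal{QH}(X;\kk))$ is finite-dimensional for every toric Fano $X$, while the cohomology algebra $\QH(X;\kk)$ can be commutative, evenly supported, finite-dimensional and non-semisimple --- for instance $\QH(\cp{1};\FF_2)=\FF_2[H]/(H+1)^2$, where $\mu^4(H,H,H,H)=1$, or the Ostrover--Tyomkin four-fold over $\CC$. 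In these examples the higher differentials $d_3^{\mathrm{SS}},d_5^{\mathrm{SS}},\ldots$ really do annihilate all but a finite-dimensional piece of $\HH^\bullet(A)$; that collapse is precisely the phenomenon Corollary \ref{cor:nonform} is designed to detect. Your heuristic that a graded derivation cannot kill a polynomial subring is not enough: a single higher differential can already reduce $\kk[u]$ to a finite-dimensional survivor in characteristic $0$, and in characteristic $p$ each successive differential can cut the surviving subring down by a further Frobenius twist, so the intersection over infinitely many pages need not be infinite-dimensional. Nothing in the hypotheses prevents this.

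The corollary is only ever applied (in deducing Corollary \ref{cor:nonform}) to the \emph{formal} $A_\infty$-algebra, i.e.\ under the additional hypothesis that $\aA$ is $A_\infty$-quasi-isomorphic to $A$, and that is the case the paper's proof actually treats: then $\HH^\bullet(\aA)$ is just $\bigoplus_{\bullet=r+s\in\ZZ/2}\HH^r(A)^s$ with the length grading $r$ collapsed into the $\ZZ/2$-grading, and Theorem \ref{thm:avramoviyengar} gives $\HH^r(A)^0\neq 0$ for infinitely many $r$, whence infinite total rank; no spectral sequence or survival argument is needed. You should either add the formality hypothesis to your statement (matching how the corollary is used), in which case your argument terminates at your ``$E_2$-page'' with nothing left to prove, or accept that the unconditional statement is false.
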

\begin{proof}
  Recall that the Hochschild cohomology $\HH^r(A)^s$ of a
  $\ZZ/2$-graded algebra $A$ admits a $(\ZZ,\ZZ/2)$-bigrading $(r,s)$;
  the $r$-grading is the length of a Hochschild cochain. Given the
  hypotheses of the corollary, $A=A^0$ is finite-dimensional,
  commutative and not semisimple. By Theorem \ref{thm:avramoviyengar},
  the Hochschild cohomology group $\HH^r(A)^0$ is nonzero for
  infinitely many $r$.

  By contrast, while the Hochschild differential for the
  $A_\infty$-algebra $\aA$ still respects the length filtration, it
  might not have degree one. Only the combined grading $r+s\in\ZZ/2$
  survives on $\HH^{\bullet}(\aA)$. If the $A_\infty$-algebra $\aA$ is
  $A_\infty$-quasi-isomorphic to its cohomology algebra $A$ then the
  Hochschild cohomology $\HH^{\bullet}(\aA)$ is simply
  $\bigoplus_{\bullet=r+s\in\ZZ/2}\HH^r(A)^s$. Since this sum is taken
  over infinitely many values of $r$, we see that $\HH^{\bullet}(\aA)$
  has infinite rank.
\end{proof}

\begin{prp}\label{prp:toricfanononformal}
  If $X$ is a toric Fano manifold and $\kk$ is an algebraically closed
  field then $\HH^{\bullet}(\mathcal{QH}(X;\kk))$ is
  finite-dimensional over $\kk$.
\end{prp}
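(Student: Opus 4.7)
The plan is to use Morita invariance of Hochschild cohomology to reduce $\HH^\bullet(\mathcal{QH}(X;\kk))\cong\HH^\bullet(\CF(\Delta,\Delta;\kk))$ to the Hochschild cohomology of the endomorphism algebra of a Lagrangian split-generator of $\fF(X;\kk)$, which has already been identified with quantum cohomology in Corollary \ref{cor:aboucrit}.

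First I would use the idempotent decomposition $\Delta=\bigoplus_\alpha\Delta_\alpha$ and Corollary \ref{cor:toricgen} to pick, for each summand $\alpha$ of $\QH(X;\kk)$, a local system $\xi_\alpha$ on the barycentric torus fibre $L\subset X$ such that $L_\alpha:=(L,\xi_\alpha)$ split-generates $D^\pi\fF(X;\kk)_\alpha$. The next step is to prove that the split-closed triangulated subcategories of $D^\pi\fF(X^-\times X;\kk)$ satisfy
\[
\langle\Delta_\alpha\rangle^\pi\;=\;\langle L_\alpha\times L_\alpha\rangle^\pi
\]
(where $L_\alpha\times L_\alpha$ carries the local system pushed forward along the moment correspondence from an appropriate local system on $T^n$, as in the remark after Corollary \ref{cor:aboucrit}). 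The inclusion $\langle\Delta_\alpha\rangle^\pi\subseteq\langle L_\alpha\times L_\alpha\rangle^\pi$ is Theorem \ref{thm:generation}. For the reverse inclusion, Corollary \ref{cor:grps} expresses the zero-section of $T^*T^n$ as a Koszul twisted complex built out of the cotangent fibre; pushing this forward via the moment correspondence functor $\mathfrak{C}$ and invoking Lemma \ref{lma:functorkoszul} shows that $L_\alpha\times L_\alpha\simeq\mathfrak{C}(T^n)$ is itself an iterated cone built out of copies of $\Delta_\alpha$ in $\fF(X^-\times X;\kk)$, and so lies in $\langle\Delta_\alpha\rangle$.

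By Morita invariance of Hochschild cohomology, this equality of split-closed envelopes gives
\[
\HH^\bullet(\CF(\Delta_\alpha,\Delta_\alpha;\kk))\;\cong\;\HH^\bullet(\CF(L_\alpha\times L_\alpha,L_\alpha\times L_\alpha;\kk)).
\]
A K\"unneth isomorphism at the $A_\infty$-level for products of monotone Lagrangians in $X^-\times X$ identifies the right-hand side with $\HH^\bullet(\CF(L_\alpha,L_\alpha;\kk))^{\otimes 2}$. Applying Morita invariance once more together with Corollary \ref{cor:aboucrit} gives $\HH^\bullet(\CF(L_\alpha,L_\alpha;\kk))\cong\QH(X;\kk)_\alpha$. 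Summing over $\alpha$ (using that the orthogonal idempotents $\mathcal{E}_\alpha$ split $\CF(\Delta,\Delta;\kk)$ as an $A_\infty$-algebra into the pieces $\CF(\Delta_\alpha,\Delta_\alpha;\kk)$), one obtains
\[
\HH^\bullet(\mathcal{QH}(X;\kk))\;\cong\;\bigoplus_\alpha\QH(X;\kk)_\alpha^{\otimes 2},
\]
which is finite-dimensional over $\kk$.

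The main obstacle is the $A_\infty$-level K\"unneth identification $\CF(L_\alpha\times L_\alpha,L_\alpha\times L_\alpha;\kk)\simeq\CF(L_\alpha,L_\alpha;\kk)^{\otimes 2}$ for monotone Lagrangians in a product (one of whose factors is $X^-$), equipped with local systems. While cohomologically this is a standard K\"unneth formula, promoting it to a chain-level $A_\infty$-quasi-isomorphism compatible with the local systems requires appealing either to a quilted Floer construction or to an Amorim--Ganatra-style K\"unneth theorem for monotone Fukaya categories.
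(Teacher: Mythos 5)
Your overall strategy---reduce $\HH^\bullet(\mathcal{QH}(X;\kk))$ via Morita invariance to the Hochschild cohomology of the endomorphism algebra of a product torus, then feed in Corollary \ref{cor:aboucrit}---is the same as the paper's, and your identification of which split-closed subcategories coincide is correct: both inclusions of $\langle\Delta_\alpha\rangle^\pi=\langle L_\alpha\times L_\alpha\rangle^\pi$ follow from Theorem \ref{thm:generation} and from the Koszul twisted complex description of $\mathfrak{C}(T^n)$ as you say. Where you diverge is in the last step: you compute $\HH^\bullet(\CF(L_\alpha\times L_\alpha,L_\alpha\times L_\alpha;\kk))$ by first splitting the Floer algebra of the product Lagrangian as a tensor product. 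The chain-level K\"unneth quasi-isomorphism $\CF(L_\alpha\times L_\alpha,L_\alpha\times L_\alpha;\kk)\simeq\CF(L_\alpha,L_\alpha;\kk)^{\mathrm{op}}\otimes\CF(L_\alpha,L_\alpha;\kk)$ is, as you acknowledge, not available inside the paper; it is a genuine external input (Amorim's K\"unneth theorem or a quilt argument), and on top of it you also need a K\"unneth isomorphism for Hochschild cohomology of a tensor product of $A_\infty$-algebras, which is a second (if milder) input. Note also that the natural output of the K\"unneth step involves the opposite algebra on the $X^-$ factor; this is harmless for $\HH^\bullet$ but should be said.

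The paper sidesteps all of this with one observation you did not use: $X^-\times X$ is itself a toric Fano, and $L_\alpha\times L_\alpha$ (equivalently $T\times T$ with a local system) is its own barycentric monotone torus fibre, i.e.\ a free Lagrangian orbit of $T^{2n}$ on $X^-\times X$. So Corollaries \ref{cor:toricgen} and \ref{cor:aboucrit} apply directly to $X^-\times X$: the summands of $D^\pi\fF(X^-\times X;\kk)$ inhabited by $\Delta$ are split-generated by it, and their Hochschild cohomology is the corresponding direct summand of $\QH(X^-\times X;\kk)$, which is finite-dimensional for free. No chain-level K\"unneth theorem for Floer complexes is needed anywhere---only finite-dimensionality of $\QH(X^-\times X;\kk)$. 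If you replace your K\"unneth step with this observation, your argument closes up entirely within the paper's toolkit (and, as a bonus, identifies $\HH^\bullet(\mathcal{QH}(X;\kk))$ with $\bigoplus_\alpha\QH(X^-\times X;\kk)_{(\alpha,\alpha)}$, consistent with the answer $\bigoplus_\alpha\QH(X;\kk)_\alpha^{\otimes 2}$ you wrote down).
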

\begin{proof}
  Suppose $X$ is a toric Fano. Then $X^{-}\times X$ is also a toric
  Fano and, by Corollary \ref{cor:toricgen}, its Fukaya category is
  split-generated by the product of monotone toric fibres $T\times T$,
  equipped with various different local systems. The diagonal $\Delta$
  is a nonzero object in $\fF(X^{-}\times X;\kk)$ for any field and
  hence it split-generates a subset of the summands of the Fukaya
  category, say $\langle\Delta\rangle=\bigoplus_{\alpha\in
    A}D^{\pi}\fF(X^{-}\times X;\kk)_\alpha$ (where
  $\langle\Delta\rangle$ denotes the subcategory split-generated by
  $\Delta$). By Corollary \ref{cor:aboucrit}, the Hochschild
  cohomology of $\langle\Delta\rangle$ is equal to a subspace of the
  quantum cohomology of $X^{-}\times X$. Since $\QH(X^{-}\times
  X;\kk)$ is finite-dimensional over $\kk$, this implies that
  $\HH^\bullet(\langle\Delta\rangle)$ is finite-dimensional over
  $\kk$. Since the endomorphism $A_\infty$-algebra of $\Delta$ in
  $\langle\Delta\rangle$ is $\mathcal{QH}(X;\kk)$, we have
  $\HH^\bullet(\langle\Delta\rangle)=\HH^\bullet(\mathcal{QH}(X;\kk))$,
  which must therefore be finite-dimensional over $\kk$ as claimed.
\end{proof}

If $X$ is a toric Fano manifold then its quantum cohomology is
supported in even degrees. Therefore Corollary
\ref{cor:avramoviyengar} and Proposition \ref{prp:toricfanononformal}
together imply the following result.

\begin{cor}\label{cor:nonform}
  If $X$ is a toric Fano and $\kk$ is an algebraically closed field
  such that $\QH(X;\kk)$ is not semisimple as a $\ZZ/2$-graded
  algebra, then $\mathcal{QH}(X;\kk)$ is not quasi-isomorphic to
  $\QH(X;\kk)$.
\end{cor}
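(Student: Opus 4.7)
The plan is simply to feed the hypotheses into the two results just cited and derive a contradiction. Suppose, aiming for a contradiction, that $\mathcal{QH}(X;\kk)$ is $A_\infty$-quasi-isomorphic to $\QH(X;\kk)$ (with the latter regarded as an $A_\infty$-algebra concentrated in $\mu^2$). Then the Hochschild cohomologies agree, and in particular $\HH^\bullet(\mathcal{QH}(X;\kk)) \cong \HH^\bullet(\QH(X;\kk))$ as $\ZZ/2$-graded vector spaces.

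Next I would verify that $\QH(X;\kk)$ meets the hypotheses of Corollary \ref{cor:avramoviyengar}. Since $X$ is toric Fano, its ordinary cohomology, and hence its quantum cohomology, is supported in even degrees; it is also commutative and finite-dimensional over $\kk$. By assumption it fails to be semisimple as a $\ZZ/2$-graded algebra. Corollary \ref{cor:avramoviyengar} then forces $\HH^\bullet(\mathcal{QH}(X;\kk))$ to have infinite rank.

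On the other hand, Proposition \ref{prp:toricfanononformal} tells us that $\HH^\bullet(\mathcal{QH}(X;\kk))$ is finite-dimensional over $\kk$, completing the contradiction. There is no real obstacle here beyond bookkeeping: the content has been done inside Corollary \ref{cor:avramoviyengar} (which uses Avramov--Iyengar to blow up Hochschild cohomology in the non-semisimple case) and inside Proposition \ref{prp:toricfanononformal} (which uses the split-generation of $\fF(X^-\times X;\kk)$ by products of monotone torus fibres, together with Corollary \ref{cor:aboucrit}, to bound the Hochschild cohomology of $\mathcal{QH}(X;\kk)$ by the finite-dimensional $\QH(X^-\times X;\kk)$). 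The only point worth double-checking is that the even-degree concentration of $\QH(X;\kk)$ for toric Fano $X$ really does allow one to apply Corollary \ref{cor:avramoviyengar} with the $\ZZ/2$-grading collapsed, and this is exactly the remark made just before the statement of the corollary.
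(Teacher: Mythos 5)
Your proposal is correct and is exactly the paper's argument: the paper's proof consists of the single observation that $\QH(X;\kk)$ is supported in even degree for toric Fano $X$, followed by combining Corollary \ref{cor:avramoviyengar} (infinite-rank Hochschild cohomology in the formal, non-semisimple case) with Proposition \ref{prp:toricfanononformal} (finite-dimensionality of $\HH^\bullet(\mathcal{QH}(X;\kk))$). Your spelled-out contradiction, including the invariance of Hochschild cohomology under $A_\infty$-quasi-isomorphism, is precisely what the paper leaves implicit.
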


This highlights a sense in which the quantum cohomology of a toric
Fano still behaves ``semisimply'' provided you remember its
$A_\infty$-structure, even when it is not semisimple as a ring. Remark
\ref{rmk:othergeneration} is another hint in this direction.

\begin{exm}
  The simplest example is
  $\mathcal{QH}(\cp{1};\FF_2)=\FF_2[H]/(H^2+1)$ with $|H|=2$. If one
  passes to the strictly unital $A_\infty$ model for this, it is not
  hard to show that the first non-vanishing higher product is
  $\mu^4(H,H,H,H)=H$.
\end{exm}

\subsection{Grassmannians}

The Grassmannian $\OP{Gr}(n,2n)$ admits a $U(n)$-action with a free
Lagrangian orbit {\cite[Proposition 2.7]{NoharaUeda}} with minimal
Maslov number $2n$. We will show that, when $n$ is a power of 2, this
Lagrangian split-generates the Fukaya category over a field of
characteristic 2.

\begin{prp}\label{lma:grnsummand}
  If $n=2^s$ and $\kk$ is a field of characteristic 2 then
  $\QH(\OP{Gr}(n,2n);\kk)$ is a local ring; in other words, it only
  has one summand in its block decomposition.
\end{prp}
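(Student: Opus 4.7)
The plan is to leverage the Siebert--Tian presentation of the quantum cohomology of Grassmannians and show that, in characteristic $2$ with $n = 2^s$, the Chern roots all ``become $1$ modulo nilpotents'', collapsing the $\binom{2n}{n}$ generically semisimple summands into a single local ring.

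Recall that the Siebert--Tian presentation (monotone, $q=1$) reads
\[R := \QH(\OP{Gr}(n,2n);\kk) = \kk[c_1,\ldots,c_n]/(h_{n+1},\ldots,h_{2n-1},\,h_{2n}-(-1)^{n-1}),\]
where $h_r$ is the $r$-th complete symmetric polynomial in the Chern roots, viewed as a polynomial in $c_i = e_i(x)$ via $\bigl(\sum_i (-1)^i c_i t^i\bigr)\bigl(\sum_r h_r t^r\bigr) = 1$. First I would pass to the Chern-root extension $\tilde R = \kk[x_1,\ldots,x_n]/J$, where $J$ is the $S_n$-invariant ideal obtained by substituting $c_i = e_i(x)$ into the defining relations. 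Since $\kk[x_1,\ldots,x_n]$ is a free module of rank $n!$ over its invariants $\kk[c_1,\ldots,c_n]$, the ring $\tilde R$ is a finite (hence integral) extension of $R$.

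The crucial step is to show that $x_j^{2n} = (-1)^{n-1}$ in $\tilde R$ for each $j$. Truncating the identity $E(t) H(t) = 1$ at degree $2n$ and using the defining relations of $\tilde R$ (i.e.\ $h_i = 0$ for $n+1 \leq i \leq 2n-1$ and $h_{2n} = (-1)^{n-1}$), one obtains the polynomial identity
\[\prod_{j=1}^n (1 - x_j t) \cdot (1 + h_1 t + \cdots + h_n t^n) = 1 + (-1)^n t^{2n}\]
in $\tilde R[t]$. Since $(1 - x_j t)$ is a linear factor on the left, it must divide $1 + (-1)^n t^{2n}$ in $\tilde R[t]$; matching the coefficient of $t^{2n}$ in the resulting formal division (which is unique because $(1 - x_j t)$ has unit constant term) forces $x_j^{2n} = (-1)^{n-1}$.

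In characteristic $2$ with $n = 2^s$, we have $(-1)^{n-1} = 1$ and, by Frobenius, $x_j^{2n} - 1 = (x_j - 1)^{2^{s+1}}$, so each $u_j := x_j - 1$ is nilpotent in $\tilde R$. Consequently $(u_1,\ldots,u_n)$ lies in the nilradical and so inside every maximal ideal of $\tilde R$. A short check via Lucas's theorem on the binomials $h_i(1,\ldots,1) = \binom{i+n-1}{n-1}$ confirms that these vanish mod $2$ for $n+1 \leq i \leq 2n-1$ and equal $1$ for $i = 2n$ when $n = 2^s$, so the evaluation $x_j \mapsto 1$ respects the relations of $\tilde R$ and produces a surjection $\tilde R \twoheadrightarrow \kk$. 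Therefore $(u_1,\ldots,u_n)$ is in fact maximal, hence the unique maximal ideal of $\tilde R$, so $\tilde R$ is local. Finally, since $R \hookrightarrow \tilde R$ is a finite integral extension, lying-over and incomparability give a surjection $\OP{MaxSpec} \tilde R \to \OP{MaxSpec} R$, so $R$ inherits a unique maximal ideal from $\tilde R$ and is itself local.

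The principal technical obstacle is extracting the identity $x_j^{2n} = (-1)^{n-1}$ cleanly from the Siebert--Tian relations: one has to navigate sign conventions in the generating function and argue directly in $\tilde R[t]$ rather than in a localization, since individual Chern roots need not be units. Everything after that is bookkeeping with nilpotents and standard facts about finite ring extensions.
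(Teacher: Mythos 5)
Your proof is correct, but it takes a different route from the paper's. The paper invokes Gepner's Landau--Ginzburg description of $\QH(\OP{Gr}(r,N);\kk)$ as the Jacobian ring of a potential $W$ (valid since $\OP{char}(\kk)=2$ does not divide $r+1=2^s+1$), pulls the critical-point equations back along the elementary-symmetric-polynomial map to get $\lambda_i^{N}=1$, observes that in characteristic $2$ with $N=2^{s+1}$ this has the unique solution $\lambda_i=1$, and then computes $\sigma_i(1,\dots,1)=\binom{2^s}{i}\bmod 2$ to locate the single point of the critical scheme; locality follows because a zero-dimensional scheme supported at one point has a local ring of functions. You instead work directly with the Siebert--Tian presentation, pass to the Chern-root cover $\tilde R$, and extract the identity $x_j^{2n}=(-1)^{n-1}$ by a generating-function truncation --- which is exactly the critical-point equation of Gepner's potential in disguise --- and then use Frobenius to exhibit the nilpotents explicitly and descend by integrality. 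The arithmetic core (uniqueness of $2^{s+1}$-th roots of unity in characteristic $2$, plus a Lucas-type binomial computation to identify the residue field) is identical, but your version is more self-contained: it needs only the standard presentation of quantum cohomology rather than the mirror/LG theorem, and it keeps track of the scheme structure rather than just the underlying variety, at the cost of the explicit manipulation in $\tilde R[t]$ and the bookkeeping with the finite extension $R\hookrightarrow\tilde R$. (Two points you elide but which are standard: injectivity of $R\to\tilde R$ follows because $\kk[x_1,\dots,x_n]$ is free over the invariants with a basis containing $1$, so $R$ is an $R$-module direct summand of $\tilde R$; and since both rings are finite-dimensional over $\kk$, all primes are maximal, so lying-over alone already gives the surjection on maximal spectra.)
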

\begin{proof}
  If $\OP{char}(\kk)$ does not divide $r+1$, then the quantum
  cohomology ring of $\OP{Gr}(r,N)$ has a description as the Jacobian
  ring of a potential function $W\colon\kk^r\to\kk$. This description
  is due to Gepner \cite{Gepner} and the potential function is defined
  as follows. Let $\sigma_i(\underline{\lambda})$ denote the $i$th
  elementary symmetric polynomial in $r$ variables
  $\underline{\lambda}=(\lambda_1,\ldots,\lambda_r)$. Consider the map
  \[\Phi\colon \kk^r\to\kk^r,\quad \Phi(\underline{\lambda})=(\sigma_1(\underline{\lambda}),\ldots,\sigma_r(\underline{\lambda})).\]
  Define the function $\tilde{W}\colon\kk^r\to\kk$ by
  \[\sum_{i=1}^r\left(\frac{\lambda_i^{N+1}}{N+1}+(-1)^r\lambda_i\right).\]
  Since this is a symmetric polynomial, it factors as
  $\tilde{W}=W\circ\Phi$ for some function $W\colon\kk^r\to\kk$. This
  $W$ is Gepner's potential function (see also {\cite[Section
      11.3]{McDuffSalamon}}). The spectrum of the Jacobian ring is a
  0-dimensional scheme whose underlying variety is the set of critical
  points of $W$. The preimages under $\Phi$ of critical points of $W$
  are critical points of $\tilde{W}$. The critical points of
  $\tilde{W}$ are
  \[\{\underline{\lambda}\ :\ \lambda_i^N=1,\ i=1,\ldots,r\}.\]
  Suppose that $r=2^s$, $N=2^{s+1}$ and $\OP{char}(\kk)=2$. There is
  only one $N$th root of unity in $\kk$, namely 1. Therefore
  $\tilde{W}$ has a unique critical point
  $\underline{\lambda}=(1,\ldots,1)$. The image of this under $\Phi$
  is $(0,\ldots,0,1)$, because
  \[\sigma_i(1,\ldots,1)=\binom{2^s}{i}\equiv \begin{cases}
  0\mod 2&\mbox{ if }i\neq 0,2^s\\
  1\mod 2&\mbox{ if }i=0,2^s.
  \end{cases}\]
  Therefore the spectrum of $\QH(\OP{Gr}(2^s,2^{s+1});\kk)$ is a
  0-dimensional scheme whose underlying variety is a single point,
  $(0,\ldots,0,1)$. This means that the quantum cohomology
  $\QH(\OP{Gr}(2^s,2^{s+1});\kk)$ is a local ring.
\end{proof}

\begin{cor}\label{cor:grn}
  The homogeneous Lagrangian $U(2^s)\subset\OP{Gr}(2^s,2^{s+1})$
  split-gen\-er\-ates the Fukaya category in characteristic 2.
\end{cor}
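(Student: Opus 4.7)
The plan is to invoke Theorem \ref{thm:mainthm} (via Corollary \ref{cor:split-gen}) in the setup $G = U(n)$, $X = \OP{Gr}(n, 2n)$, with $L = \mu^{-1}(0) \cong U(n)$ the free Lagrangian orbit from \cite[Proposition 2.7]{NoharaUeda}, $n = 2^s$, and $\kk = \bar{\FF}_2$. The formality hypothesis is automatic, since $H^*(U(n); \ZZ) = \Lambda(x_1, x_3, \ldots, x_{2n-1})$ is a torsion-free exterior algebra, so $U(n)$ has no $2$-torsion. By Proposition \ref{lma:grnsummand}, the quantum cohomology $\QH(X; \kk)$ is a local ring, so its block decomposition consists of a single summand $\alpha_0$ and $D^{\pi} \fF(X; \kk)_{\alpha_0} = D^{\pi} \fF(X; \kk)$. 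Corollary \ref{cor:split-gen} then reduces the entire assertion to showing $\HF(L, L; \kk) \neq 0$.

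Establishing this non-vanishing is the main obstacle. The minimal Maslov number is only $N_L = 2n$, while $H^*(L; \kk)$ has a ring generator $x_{2n-1}$ in the comparatively high degree $2n - 1 = N_L - 1$, so the strict form of the Biran--Cornea criterion applied in Example \ref{exm:psun} (requiring generators in degrees $< N_L - 2$) does not cover this case. My plan is to compute the pearl differential on $H^*(L; \kk)$ directly. A degree count immediately shows that every odd generator except possibly $x_{2n-1}$ is $\partial$-closed, and that $\partial x_{2n-1} = c \cdot [1_L]$ for some $c \in \kk$ counting (modulo $2$) the Maslov-$2n$ holomorphic discs in $X$ bounded by $L$ through a Morse representative of $x_{2n-1}$. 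The hard step is to prove $c = 0$; I would attempt this by exhibiting a fixed-point-free $\ZZ/2$-symmetry on the relevant disc moduli, for instance one induced by an anti-symplectic involution of $\OP{Gr}(n, 2n)$ preserving $L$ setwise, which would pair up the counted discs and force the mod-$2$ count to vanish.

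Once $c = 0$ is in place, an analogous degree count on products of generators shows that the pearl differential vanishes identically, so $\HF(L, L; \kk) \cong H^*(L; \kk) \neq 0$. The final clause of Theorem \ref{thm:mainthm} then delivers split-generation of $D^{\pi} \fF(X; \kk)_{\alpha_0} = D^{\pi} \fF(X; \kk)$ by $L$, which completes the proof.
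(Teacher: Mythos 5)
Your reduction is exactly the paper's: formality of $U(n)$ is automatic since $H^*(U(n);\ZZ)$ is torsion-free, Proposition \ref{lma:grnsummand} gives a single block, and Corollary \ref{cor:split-gen} reduces everything to $\HF(L,L;\kk)\neq 0$. You also correctly identify why this last step is the real issue: the top exterior generator $x_{2n-1}$ sits in degree $N_L-1$, so the Biran--Cornea degree criterion used for $PSU(p^k)$ in Example \ref{exm:psun} does not apply. The paper closes this gap by citation rather than computation: $U(n)\subset\OP{Gr}(n,2n)$ is the fixed-point set of an antisymplectic involution on a Hermitian symmetric space (Iriyeh--Sakai--Tasaki), and Oh's theorem on real forms of Hermitian symmetric spaces then gives $\HF(L,L;\kk)\neq 0$ in characteristic $2$.

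Your substitute for that citation has a genuine gap. First, the proposed mechanism for $c=0$ does not work as stated: the natural involution on the moduli space of Maslov-$2n$ discs induced by an antisymplectic involution $\tau$ with $\mathrm{Fix}(\tau)=L$ (namely $u\mapsto \tau\circ u\circ \mathrm{conj}$) is \emph{not} fixed-point free -- real discs are fixed -- so the pairing argument does not force the mod-$2$ count to vanish; this is precisely why Oh's analysis of real forms is nontrivial, and why, e.g., $\mathfrak{m}_0(T^2)=1 \bmod 2$ for the Clifford torus while its real counterpart behaves differently (cf.\ the discussion in Example \ref{exm:realtoric}). You would need an additional, genuinely fixed-point-free symmetry, or an explicit disc count as in Nohara--Ueda. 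Second, even granting $\partial x_{2n-1}=0$, the claim that ``an analogous degree count'' kills the whole differential is too quick: for nonvanishing of $\HF$ you need the unit not to lie in $\partial\bigl(H^{2n-1}\oplus H^{4n-1}\oplus\cdots\bigr)$, and $H^{2n-1}(U(n);\kk)$ contains decomposable classes such as $x_1x_3x_{2n-5}$ in addition to $x_{2n-1}$ (and the higher groups $H^{kN_L-1}$ are nonzero for $n\geq 4$); handling these requires the Leibniz-rule/spectral-sequence structure of the pearl complex, not a bare degree count. Either complete these computations or, as the paper does, quote Oh's theorem.
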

\begin{proof}
  The Lagrangian $U(n)$ is the fixed point set of an antisymplectic
  involution on a Hermitian symmetric space {\cite[Remark
      7]{IriyehSakaiTasaki}}. Oh \cite{OhIII} tells us that this
  Lagrangian, equipped with the trivial local system, has nonzero
  self-Floer cohomology over a field of characteristic 2. By Corollary
  \ref{cor:split-gen}, it split-generates whichever summand of the
  Fukaya category it inhabits. Lemma \ref{lma:grnsummand} tells us
  that there is only one summand.
\end{proof}

\begin{rmk}
  The Lagrangian $U(n)\subset\OP{Gr}(n,2n)$ has minimal Maslov number
  $2n$ and the minimal holomorphic discs with boundary on $U(n)$ have
  been explicitly described {\cite[Proposition 4.1]{NoharaUeda}}. It
  should be possible, using this information, to work out for all $n$
  precisely which summands of the Fukaya category of $\OP{Gr}(n,2n)$
  are split-generated by $U(n)$, equipped with different local
  systems, in different characteristics.
\end{rmk}

\bibliographystyle{plain}
\bibliography{GGeneration.bib}

\end{document}